\newtheorem{theorem}{Theorem}[section]
\newtheorem{lemma}[theorem]{Lemma}
\newtheorem{corollary}[theorem]{Corollary}
\newtheorem{rem}[theorem]{Remark}
\newtheorem{assumption}[theorem]{Assumption}
\numberwithin{equation}{section}
\newcommand{\RN}{{\setR^N}}
\newcommand{\RNn}{\setR^{n\times N}}
\newcommand{\dt}{\ensuremath{\,{\rm d} t}}
\newcommand{\dx}{\ensuremath{\,{\rm d} x}}
\newcommand{\dy}{\ensuremath{\,{\rm d} y}}
\newcommand{\dz}{\ensuremath{\,{\rm d} z}}
\newcommand{\deps}{\ensuremath{\,{\rm d} \varepsilon}}
\newcommand{\Acal}{{\mathcal{A}}}
\providecommand{\wal}{\ensuremath{w_\lambda}}
\providecommand{\Oal}{\ensuremath{\mathcal{O}_\lambda}}
\providecommand{\Acal}{\mathcal{A}}
\def\esup{\ensuremath{\operatorname*{ess\, sup}}}
\begin{document}

\title[Nonlinear parabolic systems]{Well posedness of nonlinear parabolic systems beyond duality}\thanks{M.~Bul\'{\i}\v{c}ek's work is supported by  the Czech Science Foundation (Grant no. 18-12719S). M.~Bul\'{\i}\v{c}ek and S. Schwarzacher are members of the Ne\v{c}as Center for Mathematical Modeling. J. Burczak was supported by the National Science Centre, Poland (NCN) grant `SONATA'
2016/21/D/ST1/03085. J. Burczak was supported by MNiSW "Mobilnosc Plus" grant
1289/MOB/IV/2015/0.}

\author[M.~Bul\'{\i}\v{c}ek]{Miroslav Bul\'{\i}\v{c}ek} 
\address{Mathematical Institute, Faculty of Mathematics and Physics, Charles University
Sokolovsk\'{a} 83, 186 75 Prague, Czech Republic}
\email{mbul8060@karlin.mff.cuni.cz}

\author[J. Burczak]{Jan Burczak} 
\address{Mathematical Institute, University of Oxford, UK and Institute of Mathematics, Polish Academy of Sciences, \'Sniadeckich 8, 00-656 Warsaw,
Poland}
\email{burczak@maths.ox.ac.uk}

\author[S.~Schwarzacher]{Sebastian Schwarzacher}
\address{Mathematical Institute, Faculty of Mathematics and Physics,  Charles University,
Sokolovsk\'{a} 83, 186 75 Prague, Czech Republic}
\email{schwarz@karlin.mff.cuni.cz}

\begin{abstract}
We develop a methodology for proving well-posedness in optimal regularity spaces for a wide class of nonlinear parabolic initial-boundary value systems, where the standard monotone operator theory fails.
A motivational example of a problem accessible to our technique is the following system
\[
  \partial_tu-\divergence \left( \nu(|\nabla u|) \nabla u \right)= -\divergence f
\]
with a given {strictly} positive bounded function $\nu$, {such that $\lim_{k\to \infty} \nu(k)=\nu_\infty$} and $f \in L^q$ with $q\in (1,\infty)$. The {existence, uniqueness and regularity} results for $q\ge 2$ are by now standard. However, even if a priori estimates  are available, the existence in case $q\in (1,2)$ was essentially missing. We overcome the related crucial difficulty, namely the lack of a standard duality pairing, by resorting to proper weighted spaces and consequently provide existence, uniqueness and optimal regularity in the entire range $q\in (1,\infty)$.

Furthermore, our paper includes several new results that may be of independent interest and serve as the starting point for further analysis of more complicated problems. They include a parabolic Lipschitz approximation method in weighted spaces with fine control of the time derivative and a theory for linear parabolic systems with right hand sides belonging to Muckenhoupt weighted $L^q$ spaces.
\end{abstract}
%

\keywords{nonlinear parabolic systems, weighted estimates, existence, uniqueness,  very weak solution, monotone operator, parabolic Lipschitz approximation, weighted space, Muckenhoupt weights}
\subjclass[2010]{35D99,35K51,35K61,35A01,35A02}

\maketitle

\section{Introduction}
\label{S1}

\noindent
We consider the following system of partial differential equations
\begin{align}
\label{eq:sysA}
\begin{aligned}
  \partial_tu-\divergence A(z; \nabla u) &= -\divergence
  f &&\textrm{ in } Q_T,\\
  u&=\tilde{g}&&\textrm{ on } \partial Q_T,
  \end{aligned}
\end{align}
where $z = (x,t) \in \Omega \times (0,T) \equiv {Q_T}$ with a bounded $\Omega \subset \mathbb{R}^n$, $n\ge 2$ and $\partial Q_T \equiv (\partial \Omega \times (0,T)) \cup  (\Omega \times \{0\})$ is the parabolic boundary. The unknown is the vector $u: {Q_T} \to \mathbb{R}^N$ with $N \in
\mathbb{N}$, whereas the given data set consists of: the forcing $f: {Q_T} \to \mathbb{R}^{n\times N}$, 
the initial-boundary values $\tilde{g}:\partial Q_T\to \setR^N$ and the nonlinear mapping $A: {Q_T} \times\mathbb{R}^{n\times N}\to \mathbb{R}^{n\times N}$.

Our primary interest lies in studying well-posedness of  \eqref{eq:sysA} for right hand sides $f$ that evade the standard theory\footnote{Difficulties arising from irregularity of the datum $\tilde{g}$ are of secondary concern to us and the ones related to roughness of the domain or of the tensor $A$ are even less so.}. Let us explain this in more detail. For the sake of clarity we focus on the case $\tilde{g} \equiv 0$.

In the simplest situation, i.e.,for $A$ being linear, strongly elliptic and sufficiently smooth, one has $f\in L^q(Q_T,\mathbb{R}^{n\times N}) \implies \nabla u \in L^q(Q_T,\mathbb{R}^{n\times N})$ in the whole range $q \in (1, \infty)$,~\cite{Byunatall15}. It is strongly related to second order $L^q$-theory~\cite[Ch. IV]{LadSolUra68}.

The picture becomes much more entangled when we enter the nonlinear realm. The first step is to consider mappings $A$ that are monotone and of linear growth. Naturally via the standard monotone operator theory $f\in L^2(Q_T,\mathbb{R}^{n\times N})$ implies existence of a unique weak solution $u\in L^2(W^{1,2})$ to~\eqref{eq:sysA}. Moreover, for higher integrable $f$'s we can obtain respective regularity of the solution~\cite{AceMin07,Bog14}.

On the other hand, the theory for $f \in L^q(Q_T,\mathbb{R}^{n\times N}), q\in (1,2)$ \emph{ is essentially missing}. Hence, the challenge we have set ourselves reads
\begin{center}
{\em Develop well posedness theory for \eqref{eq:sysA} with $f\in L^q$ in the entire range $q\in (1,\infty)$.}
\end{center}
We believe that our main results, i.e., Theorems \ref{thweak} and \ref{th} settle this matter to a considerable extent, within monotone mappings $A$ of linear growth. 

Let us emphasise that our main contribution is less in deriving a priori estimates, but rather in providing an existence and uniqueness methodology. Indeed, on the one hand, a priori estimates are (formally) rather straightforward within our quite restrictive assumption~\eqref{eq:ass1}. On the other hand, by inspecting the monotone operator theory one realises that it essentially requires the mapping $u\mapsto \divergence A(\cdot,\nabla u)$ to couple via duality. Thus, since in the case $q\in (1,2)$ a-priori estimates in $W^{1,q}$ do not allow for a (standard) duality pairing, they seem too weak to provide existence or uniqueness (in correct, i.e., optimal regularity classes). 
We circumvent this obstacle by utilising a weighted duality pairing.

\subsection{Context and related results}

Our paper continues the line of research initiated recently in \cite{BulDieSch15}, where well-posedness for all $q \in (1, \infty)$ was provided in the elliptic case. The key idea developed there was to replace the non-available $L^q$ duality with the one in a Muckenhoupt-weighted $L^2_\omega$ space, with $\omega:=(Mf)^{q-2}$, where $M$ is the Hardy-Littlewood maximal operator. 
 The linear growth bound on $A$ implies then that $u$ and $\divergence A(\cdot,\nabla u)$ form a duality couple in $L^2_\omega$. The result of \cite{BulDieSch15} was further extented in~\cite{BulBurSch16} for the steady systems covering flows of incompressible fluids and in~\cite{BulSch16} for the to the $p$-Laplacian setting for $q$ smaller but close to $p$. 
%

 The main contribution of this paper is showing that a result analogous to that of \cite{BulDieSch15} holds true for the parabolic system~\eqref{eq:sysA}. 
 Let us briefly explain the main parabolic challenges. Firstly, one has to provide {\em parabolic a-priori estimates in weighted spaces}. In the case studied here they essentially rely on weighted estimates for the linear case, which are new for general Muckenhoupt weights (certain special cases, not applicable for our nonlinear purposes, may be found in \cite{Byunatall15}). Thus we believe that our main linear estimates of Theorem \ref{thm:lin} may be of independent interest. The second challenge, namely {\em exploiting the monotonicity by the weighted duality}, is more pivotal to the whole reasoning. Here the celebrated Lipschitz truncation method~\cite{AceF84,Lew93,KinLew02,DieRuzWol10} comes into play and has to be refined according to the weighted estimates. We built on the parabolic Lipschitz truncation first developed in~\cite{KinLew02}. Our estimates heavily rely on the more recent version of the parabolic Lipschitz truncation constructed in~\cite{DieSchStrVer16}. The technical highlight of the present paper's Lipschitz approximation provided in Theorem \ref{thm:lipseq} represents our new and rather surprising fine control of the distributional time derivative.

Concerning related nonlinear results, let us recall that for the parabolic $p$-Laplacian higher integrability results are available in the framework of { weak solutions}, i.e., within duality pairing, cf.
~\cite{KinLew00,AceMin07,Bog14} and even continuity bounds for the gradient are known~\cite{DiBFri85,DiB93,Mis02,Bur12, Sch14}. 
However, in the framework of very-weak solutions  (beyond duality pairing) estimates are only available for exponents $q$ close to $p$, cf.~\cite{KinLew02}. Moreover, an existence or uniqueness theory for very weak solutions is missing for the parabolic $p$-Laplacian.
 However, in the elliptic case for $q$ close to $p$ existence was shown~\cite{BulSch16}, where the elliptic strategy developed in~\cite{BulDieSch15} was successfully implemented. 

Finally, let us observe that the case of $f\in L^q$, $q \in (1,2)$ goes much beyond the measure-valued right hand sides that have been attracting a considerable attention, compare \cite{BreFri83,BocGal92,BocGal97,BlaMur97,BlaMur01} for some scalar cases. Despite that, for measures of any form uniqueness of solutions was missing. Hence, it seems worth emphasizing that our main result provides in particular existence and uniqueness for systems with measure-valued right-hand sides, compare Corollary~\ref{CMB}.
%


{\em Paper structure:}
In the remaining part of this section, we formulate precisely the assumptions on the data and state our main results, immediately after introducing solely the fundamental notions. Further definitions and certain auxiliary results are gathered in Section \ref{S2}. The subsequent two sections contain our main technical contributions: i.e., a refined parabolic Lipschitz approximation and the linear weighted theory, respectively. Finally, Section \ref{sec:pf} is devoted to the proof of our main results.

\subsection{Assumptions}\label{S:Results}
Throughout the whole paper, we use the standard notation for Lebesgue, Sobolev and Bochner spaces respectively. The conjugate exponent to $q$ is denoted by $q' := q/(q-1)$. For precise definitions of weighted spaces and for the notion of Muckenhoupt weights, we refer to Section~\ref{S2}.

First, let us consider the nonlinear tensor $A$.
\begin{assumption}
\label{ass:A}
Let ${A}: Q_T \times \RNn \to \RNn$ be a Carath\'{e}odory mapping such that for certain positive numbers $c_0, c_1, c_2$ there hold the following linear growth bounds for all $Q\in \RNn$
\begin{align}
\label{ass:lin}
c_0\abs{Q}^2- c_2\leq {A}({z}; Q)\cdot Q, \qquad \abs{{A}({z}; Q)}\leq c_1\abs{Q} +c_2.
\end{align}
Moreover, let $A$ be monotone, i.e., for all $Q,P\in \RNn$
\begin{align}
\label{ass:mon}
0\leq({A}({z}; Q)-{A}({z}; P))\cdot(Q-P).
\end{align}
Moreover, we assume that $A$ is \emph{linear-at-infinity} in the following sense: there exists a mapping $\tilde {A}\in L^{\infty}(Q_T; \RNn\times \RNn)$ and positive constants $\tilde{c}_0$ and $\tilde{c}_1$ such that for all $\eta \in  \mathbb{R}^{n\times N}$ and all $z\in Q_T$, we have
\begin{equation} \label{tildeA}
\tilde{c}_0|\eta|^2\le {\tilde A (z)} \eta \cdot \eta \le \tilde{c}_1|\eta|^2
\end{equation}
and
\begin{align}
\label{eq:ass1}
\lim_{\abs{Q}\to\infty} \esup_{z\in Q_T}\frac{\abs{{A}({z}; Q)- \tilde A (z) Q}}{\abs{Q}}=0.
\end{align}
\end{assumption}
Of course, the motivational example from the abstract
\begin{align}
\label{eq:mot}
A(Q)=\nu(\abs{Q})Q
\end{align}
with a given {strictly} positive bounded function $\nu$, such that $\lim\limits_{k\to \infty} \nu(k)=\nu_\infty$ and $\nu'\geq -1$ falls within the Assumption~\ref{ass:A}. 

Assumption~\ref{ass:A} suffices for the existence theory in optimal regularity spaces. However, for the uniqueness, we shall require a slightly stronger
\begin{assumption}
\label{ass:B}
Let $A$ satisfy Assumption~\ref{ass:A} and in addition it also fulfils
\[
\lim_{\abs{Q}\to\infty}  \esup_{z\in Q_T} \left|\frac{\partial {A} ({z}; Q)}{\partial Q}- {\tilde A (z)} \, \right| = 0.
\]
\end{assumption}
Since in \eqref{eq:mot} admissible choices are
\[
\nu(\abs{Q})=\min\{\nu_\infty,\abs{Q}^{p-2}\}\text{ for }p\in (2,\infty)\text{  or }\nu(\abs{Q})=\max\{\nu_\infty,\abs{Q}^{p-2}\}\text{ for }p\in (1,2),
\]
the paper covers certain approximations of the parabolic $p$-Laplacian. 

\vskip 2mm

Let us now specify our assumptions on the initial and boundary data $\tilde{g}$. Since we are dealing with the solutions that are beyond the framework of duality, which  means that $\tilde{g}$ might not belong to  the natural trace-space with respect to the parabolic operator, namely 
\[
\set{\tilde{g}\in L^2(0,T; W^{1/2,2}(\partial \Omega;\mathbb{R}^N))\,:\,\tilde{g}(0)\in L^2(\Omega;\mathbb{R}^N)},
\] 
we have to proceed carefully while introducing a notion of boundary and initial conditions. In order to simplify our presentation and also to avoid the unwieldy technical tools  (function space related), we rather prescribe the space-time traces as being attained by a certain function $g$, which can be given e.g. by a heat flow. More precisely, we have
\begin{assumption}
\label{ass:C}
Let the initial-boundary data $\tilde{g}$ be of the form $\tilde{g}(0) \in (\mathcal{D}(\Omega;\mathbb{R}^N))^*$ and $\tilde{g}\in L^1(0,T; L^1(\partial \Omega;\mathbb{R}^N))$. Furthermore, we require that  there exist $g\in L^1(0,T; W^{1,1}(\Omega;\mathbb{R}^N))$ and $F\in L^1(Q_T; \RNn)$ such that $g=\tilde{g}$ on $(0,T)\times \partial \Omega$ and that
\begin{equation} \label{bypart}
\int_0^T  g(z) \cdot \partial_t\varphi(z) \dz + \langle \tilde{g}(0), \varphi(0)\rangle_{\mathcal{D}(\Omega)} = \int_Q  F(z) \cdot \nabla \varphi(z) \dz
\end{equation}
is satisfied for all $\varphi\in \mathcal{D}((-\infty,T)\times \Omega;\mathbb{R}^N)$.
\end{assumption}
Notice here that \eqref{bypart} is nothing else than
\begin{equation}\label{represent}
\partial_t g = \divergence F \qquad \textrm{ and } \qquad g(0)=\tilde{g}(0)
\end{equation}
in sense of distribution.

\subsection{Weak and very weak solutions}
At this point, we can define a notion of a weak solution to~\eqref{eq:sysA}. Hence, for $A$ satisfying Assumption~\ref{ass:A}, $\tilde{g}$ satisfying Assumption~\ref{ass:C} and $f\in L^1(Q_T; \RNn)$ we look for $u\in L^1(0,T; W^{1,1}(\Omega; \RN))$ such that $(u-g) \in L^1(0,T; W^{1,1}_0(\Omega; \RN))$ and for all $ \phi\in \mathcal{C}^{1}_0(\Omega \times (-\infty, T);\mathbb{R}^N)$  there holds
\begin{equation}\label{eq:WL}
 \int_{{Q_T}} \left[ (- u (z)+g(z)) \cdot \partial_t \phi (z) + A(z; \nabla u (z)) \cdot \nabla \phi (z)\right] \dz =\int_{Q_T}  (f (z)+F(z)) \cdot \nabla \phi (z) \dz.
\end{equation}
Thanks to the growth assumption on $A$, see Assumption~\ref{ass:A}, and the fact that $\nabla u\in L^1(Q_T;\mathbb{R}^{n\times N})$ all integrals in \eqref{eq:WL} are well defined. 
Please observe that, in view of the representation Assumption~\ref{ass:C}, the identity \eqref{eq:WL} is the distributional formulation of~\eqref{eq:sysA}.


If $u$ and $\divergence A(\cdot,\nabla u)$ are not coupled via a duality, i.e., if $\nabla u\not\in L^2$, then we call $u$ a {\em very weak solution}. If $\nabla u \in L^2$ then we call $u$ a {\em weak solution}. In particular, in the regime of {\em weak solutions} $u-g$ can be used as a test function in~\eqref{eq:WL} but not in the very weak regime.


\subsection{Uniqueness}\label{ssec:uq}
We will call a solution to \eqref{eq:sysA} \emph{unique in the $L^s(0,T; W^{1,s}(\Omega;\RN))$ class} provided the following holds: Take any two solutions $u_1, u_2 \in L^s(0,T; W^{1,s}(\Omega;\RN))$ to \eqref{eq:sysA} with data $(g_1,F_1,f)$ and $(g_2,F_2,f)$ respectively. If $(g_1,F_1)$ and $(g_2,F_2)$ satisfy $(g_1-g_2) \in L^{s}(0,T; W^{1,s}_0(\Omega;\RN))$ and for all $\varphi\in \mathcal{C}^1_0((-\infty,T)\times \Omega;\mathbb{R}^N)$
\begin{equation}\label{uniq:nonlin}
\int_{Q_T} (g_1-g_2)\cdot \partial_t \varphi - (F_1-F_2)\cdot \nabla \varphi \dz=0,
\end{equation}
then $u_1=u_2$ almost everywhere in $Q_T$. 

\subsection{Results}
We are ready to state our main results. 
\begin{theorem}\label{thweak}
Let $q\in (1,\infty)$ be arbitrary and $\Omega \in \mathcal{C}^1$. Assume that  ${A}$ satisfies Assumption~\ref{ass:A}, $\tilde{g}$ satisfies Assumption~\ref{ass:C} with $g\in L^q(0,T;W^{1,q}(\Omega;\mathbb{R}^N))$ and $F\in L^q(Q_T;\mathbb{R}^{n\times N})$ and assume that $f\in L^q (Q_T;\mathbb{R}^{n\times N})$. Then  \eqref{eq:sysA} admits a solution $u$ satisfying~\eqref{eq:WL} such that
\begin{equation}\label{eq:optweak}
\norm{u-g}_{L^q (0,T; W_0^{1,q} (\Omega;\RN))}  \le C \left(1+ \norm{f}_{L^{q} (Q_T;\RNn)}+\norm{\nabla g}_{L^{q} (Q_T;\RNn)}+\norm{F}_{L^{q} (Q_T;\RNn)} \right).
\end{equation}
Moreover, the estimate \eqref{eq:optweak} holds true for any $u \in L^s (0,T; W^{1,s} (\Omega;\RN))$ with an $ s>1$, fulfilling~\eqref{eq:WL} and satisfying $u=g$ on $(0,T)\times \partial \Omega$. The multiplicative constant $C$ depends only on the dimensions $n, N$, $q$, the $\mathcal{C}^1$-modulus of $\Omega$ and the quantities in Assumption~\ref{ass:A}.

In addition, if Assumption~\ref{ass:B} is fulfilled, then for any $s>1$ $u$ is unique in the class $L^s(0,T; W^{1,s}(\Omega;\RN))$. \end{theorem}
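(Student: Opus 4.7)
The plan is to combine the weighted linear theory of Theorem \ref{thm:lin} with the refined parabolic Lipschitz approximation of Theorem \ref{thm:lipseq}, splitting the argument according to whether $q \ge 2$ or $q \in (1,2)$. The a priori estimate \eqref{eq:optweak} for any $u \in L^s(0,T;W^{1,s}(\Omega;\RN))$ satisfying \eqref{eq:WL} is the backbone; to obtain it I would rewrite the equation as
\[
\partial_t (u-g) - \divergence\bigl(\tilde A \nabla(u-g)\bigr) = -\divergence\bigl(f + F + \tilde A \nabla g + R(\cdot, \nabla u)\bigr), \quad R(z;Q) := A(z;Q) - \tilde A(z) Q,
\]
observe that \eqref{eq:ass1} gives $|R(z;Q)| \le \varepsilon |Q| + C_\varepsilon$, and invoke Theorem \ref{thm:lin} with the Muckenhoupt weight $\omega := \bigl(\Ma(|f|+|F|+|\nabla g|)\bigr)^{q-2}$, which lies in $A_2$ since $q - 2 \in (-1,0)$ (and with $\omega \equiv 1$ when $q \ge 2$). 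Unweighting via the natural equivalence $\|h\|_{L^2_\omega}^2 \sim \|h\|_{L^q}^q$ valid for $h$ dominated by the maximal function generating $\omega$, and absorbing the small-$\varepsilon$ term $\|R(\cdot,\nabla u)\|$ on the left, yields \eqref{eq:optweak} regardless of the value of $s > 1$.

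For \emph{existence} I would first truncate the data: set $f_m := f\chi_{\{|f|\le m\}}$ and analogous $L^2$-truncations of $F$ and $\nabla g$ preserving the distributional representation \eqref{bypart}. The classical monotone operator theory provides weak solutions $u_m \in L^2(0,T;W^{1,2}(\Omega;\RN))$ to the approximating problems; applying the a priori estimate from the previous step uniformly in $m$, the sequence $(u_m)$ is bounded in $L^q(W^{1,q})$, and one extracts a weakly convergent subsequence $\nabla u_m \rightharpoonup \nabla u$. The \emph{decisive obstacle} here, and the central nonlinear difficulty of the theorem, is identifying the weak limit of $A(\cdot,\nabla u_m)$, since $u_m - u$ is not an admissible test function below duality. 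I would apply Theorem \ref{thm:lipseq} to $u_m - u$ to produce a parabolic Lipschitz truncation $w_{m,\lambda}$ with $\|\nabla w_{m,\lambda}\|_\infty \lesssim \lambda$ together with the refined control of the distributional time derivative, test the difference of the $u_m$- and $u$-equations with $w_{m,\lambda}$, exploit the monotonicity \eqref{ass:mon} on the set where $w_{m,\lambda} = u_m - u$, and handle the time-derivative contribution through the new $\partial_t$-control of Theorem \ref{thm:lipseq}. A Minty-type argument along a diagonal sequence $\lambda \to \infty$ then identifies $A(\cdot,\nabla u_m) \rightharpoonup A(\cdot, \nabla u)$, so $u$ solves \eqref{eq:WL}.

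For \emph{uniqueness} under Assumption \ref{ass:B}, the difference $w := u_1 - u_2$ of two solutions with data compatible in the sense of \eqref{uniq:nonlin} satisfies distributionally
\[
\partial_t w - \divergence\bigl(B(z)\nabla w\bigr) = 0, \qquad w = 0 \text{ on } \pbnd Q_T,
\]
where $B(z) := \int_0^1 \partial_Q A\bigl(z;\, \nabla u_2 + \theta \nabla w\bigr)\,d\theta$. Assumption \ref{ass:B} ensures $|B(z) - \tilde A(z)| \to 0$ as $|\nabla u_1(z)| + |\nabla u_2(z)| \to \infty$, so for any $\varepsilon > 0$ one may decompose $B = \tilde A + E_\varepsilon + S_\varepsilon$ with $\|E_\varepsilon\|_\infty \le \varepsilon$ and $S_\varepsilon$ supported on $\{|\nabla u_1|+|\nabla u_2| \le K_\varepsilon\}$. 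Applying Theorem \ref{thm:lin} in the appropriate weighted $L^2$ space (adapted to the integrability class $s$ of $\nabla w$), choosing $\varepsilon$ small enough to absorb the $E_\varepsilon \nabla w$ contribution on the left, reduces the problem to a linear system driven by the bounded compactly supported term $S_\varepsilon \nabla w$; a Gronwall argument in the time variable then forces $w \equiv 0$. The hard part throughout is the Lipschitz-truncation step in the existence proof — specifically, its compatibility with the weighted duality, which is precisely what the refined time-derivative control supplied by Theorem \ref{thm:lipseq} is engineered for.
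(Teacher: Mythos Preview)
Your overall architecture matches the paper's, but two steps would not close as written.

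First, the ``unweighting'' in your a~priori estimate fails on the left-hand side. With $\omega=(M(|f|+|F|+|\nabla g|))^{q-2}$ for $q<2$, the linear theory gives $\|\nabla u\|_{L^2_\omega}\le C(1+\|\text{data}\|_{L^2_\omega})$ after absorbing the $\varepsilon|\nabla u|$ contribution, and the right-hand side is indeed controlled by $\|\text{data}\|_{L^q}$ because the data are pointwise dominated by their own maximal function. But on the left you have $\int|\nabla u|^2\omega$, and there is no pointwise relation between $|\nabla u|$ and $M(\text{data})$, so you cannot extract $\|\nabla u\|_{L^q}$ from it. The paper instead applies Theorem~\ref{thm:lin} \emph{directly} in the target space $L^q$ (weight $\equiv 1\in\mathcal A_q$), first truncating the weight to ensure finiteness and then passing to the limit; see~\eqref{aprior}--\eqref{apriorq}. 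The $\mathcal A_2$ weight $\omega_0=(1+Mf)^{q_0-2}$ is introduced for a different purpose altogether: it furnishes the weighted duality pairing in which the Minty argument is carried out, and it is there --- not in the a~priori estimate --- that the weighted control from Theorem~\ref{thm:lipseq} earns its keep.

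Second, in the identification step your diagonal $\lambda\to\infty$ argument is incomplete. Testing with the Lipschitz truncation $w_{m,\Lambda}$ controls $\int A(\cdot,\nabla u_m)\cdot\nabla w_{m,\Lambda}$, but to reach $\int A(\cdot,\nabla u_m)\cdot\nabla(u_m-u)\,\omega_0$ you must also handle the bad set $\mathcal O^m_\Lambda=\{w_{m,\Lambda}\neq u_m-u\}$. Property~\ref{LS4} gives $|\mathcal O^m_\Lambda|\le C/\Lambda$, but the integrand $(|A(\cdot,\nabla u_m)|^2+|\nabla u_m|^2)\omega_0$ is only bounded in $L^1$, not equi-integrable, so small measure alone does not make the bad-set contribution vanish. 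The paper closes this via Chacon's biting lemma (Lemma~\ref{thm:blem}): one restricts to sets $E_j\subset Q_T$ with $|Q_T\setminus E_j|\to 0$ on which the relevant $L^1$ sequence is equi-integrable, runs the weighted Minty argument (Lemma~\ref{lemma:Minty}) there, and then lets $j\to\infty$. Without this device the limit identification does not go through.
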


\begin{rem}\ \\
The uniqueness statement yields that $u$ is independent of a choice of representative for $\tilde g$.\\ 
Furthermore, for classical datum $\tilde g$, e.g. $\tilde{g}\in L^q(0,T; W^{1-\frac{1}{q},q}(\partial \Omega;\mathbb{R}^N))$ and $\tilde{g}(0)\in L^q(\Omega;\mathbb{R}^N)$ representation $g, F$ along Assumption~\ref{ass:C} follows from the heat flow. Indeed, $g$ can be chosen to be the unique solution to $\partial_t g = \divergence \nabla g$ with initial-boundary datum $\tilde g$. In this case~\eqref{keyest} reduces to
\begin{equation}\label{keyestEx}
\norm{\nabla u}_{L^q(Q_T;\RNn)} \leq C \left(\norm{f}_{L^{q}({Q_T};\RNn)} + \norm{\tilde g (0)}_{L^{q}({Q_T};\RN)}+\norm{\tilde g}_{L^q(0,T; W^{1-\frac{1}{q},q}(\partial \Omega;\RN))}\right).
\end{equation}
\end{rem}



As announced, our Theorem~\ref{thweak} covers the entire range $q \in (1, \infty)$.

To illustrate the generality of right-hand sides admissible there let us denote by $\mathcal{M}(Q_T;\setR^N)$ the space of $\setR^N$-valued Radon measures and consider the problem
 \begin{align}
\label{eq:sysM}
\begin{aligned}
  \partial_tu-\divergence A(z; \nabla u) &= \mu &&\textrm{ in } Q_T,\\
  u&=0&&\textrm{ on } \partial Q_T.
  \end{aligned}
\end{align}
For this setting we have the following existence and uniqueness result:
\begin{corollary}\label{CMB}
Let $\partial \Omega \in C^1$,  ${A}$ satisfy Assumption~\ref{ass:A} and $\mu \in \mathcal{M}(Q_T;\setR^N)$. Then the problem  \eqref{eq:sysM} admits a weak solution $u\in L^s(0, T; W^{1,s}_0(\Omega;\mathbb{R}^{n\times N}))$ for any $1<s<n/(n-1)$.

Moreover, for all $u \in L^s (0,T; W_0^{1,s} (\Omega))$ with $1<s<n/(n-1)$ solving  \eqref{eq:sysM}, the following estimate holds
\begin{equation}\label{eq:optM}
\norm{u}_{L^s (0,T; W_0^{1,s} (\Omega))}  \le C \left(1+ \|\mu\|_{\mathcal{M}(Q_T;\setR^N))} \right).
\end{equation}
In addition, if Assumption~\ref{ass:B} is fulfilled, then the solution $u$ solving \eqref{eq:sysM} is unique in the $L^s(0,T; W^{1,s}(\Omega))$ class, for any $s>1$.
\end{corollary}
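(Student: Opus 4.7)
The plan is to cast $\mu$ in the divergence form $\mu=-\divergence_x f$ with $f\in L^s(Q_T;\RNn)$ for every $s\in(1,n/(n-1))$, and then to invoke Theorem~\ref{thweak} as a black box. Since $s'>n$, the Sobolev embedding $W^{1,s'}_0(\Omega)\hookrightarrow C(\overline\Omega)$ gives by duality the continuous inclusion $\mathcal{M}(\Omega)\hookrightarrow W^{-1,s}(\Omega)$, so the elliptic Dirichlet problem $-\Delta_x V=\mu$ on $\Omega$ with $V|_{\partial\Omega}=0$ is solvable slicewise in $t$. This produces $f:=-\nabla_x V\in L^s(Q_T;\RNn)$ with $-\divergence_x f=\mu$ in $\mathcal{D}'(Q_T)$ and $\|f\|_{L^s(Q_T)}\le C\|\mu\|_{\mathcal{M}(Q_T)}$.

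With this $f$ and the trivial choice $\tilde g\equiv 0$ (whence $g\equiv 0$ and $F\equiv 0$ satisfy Assumption~\ref{ass:C}), Theorem~\ref{thweak} delivers a weak solution $u\in L^s(0,T;W_0^{1,s}(\Omega;\RN))$ of \eqref{eq:sysM} together with
\[
\|u\|_{L^s(0,T;W_0^{1,s}(\Omega))}\le C\bigl(1+\|f\|_{L^s(Q_T)}\bigr)\le C\bigl(1+\|\mu\|_{\mathcal{M}(Q_T)}\bigr),
\]
which is exactly \eqref{eq:optM}. The a priori part of the same theorem propagates this bound to every $L^s(W^{1,s})$-solution of \eqref{eq:sysM}. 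For uniqueness under Assumption~\ref{ass:B}, any two solutions $u_1,u_2\in L^s(0,T;W^{1,s}(\Omega))$ share the trivial data realisation $(g_i,F_i)=(0,0)$, so \eqref{uniq:nonlin} is automatic and the uniqueness statement of Theorem~\ref{thweak} forces $u_1=u_2$ almost everywhere in $Q_T$.

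The key obstacle in this plan is the representation $\mu=-\divergence_x f$ itself: for a generic space-time measure the slicewise construction is not literally well defined, because the time-marginal of $\mu$ need not be absolutely continuous and the norms $\|\mu(\cdot,t)\|_{\mathcal{M}(\Omega)}$ need not lie in $L^s((0,T))$. To make the argument rigorous one should first replace $\mu$ by a standard mollification $\mu_k\in C_c^\infty(Q_T)$ with $\|\mu_k\|_{L^1(Q_T)}\le\|\mu\|_{\mathcal{M}(Q_T)}$ and $\mu_k\rightharpoonup\mu$ in the sense of measures, apply the argument above to each $\mu_k$ to obtain solutions $u_k$ uniformly bounded in $L^s(0,T;W^{1,s}_0)$ via \eqref{eq:optweak}, and then pass to the limit. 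It is precisely the weighted duality and the parabolic Lipschitz truncation of Theorem~\ref{thm:lipseq}, both of which underlie the proof of Theorem~\ref{thweak}, that are needed in the limit passage to identify $A(\nabla u_k)$ weakly in $L^1$ and close the argument.
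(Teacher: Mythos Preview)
Your reduction has a genuine gap that mollification does not repair. The representation $\mu=-\divergence_x f$ with $f\in L^s(Q_T)$ would be equivalent to the embedding $\mathcal{M}(Q_T)\hookrightarrow L^s(0,T;W^{-1,s}(\Omega))$, i.e.\ by duality to $L^{s'}(0,T;W^{1,s'}_0(\Omega))\hookrightarrow C(\overline{Q_T})$. The spatial Sobolev embedding for $s'>n$ gives continuity in $x$, but $L^{s'}$ in time never embeds into $C$ in time; a Dirac mass $\delta_{(x_0,t_0)}$ is a counterexample. Your mollified version inherits the same defect quantitatively: slicewise one has $\|f_k(\cdot,t)\|_{L^s(\Omega)}\le C\|\mu_k(\cdot,t)\|_{L^1(\Omega)}$, hence
\[
\|f_k\|_{L^s(Q_T)}^s\le C\int_0^T \|\mu_k(\cdot,t)\|_{L^1(\Omega)}^s\,dt,
\]
and the right-hand side is \emph{not} controlled by $\|\mu_k\|_{L^1(Q_T)}^s$ for $s>1$ (let $\mu_k$ concentrate on a time interval of length $1/k$ while keeping total mass one). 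So the uniform $L^s$ bound on $f_k$ that you need to feed into Theorem~\ref{thweak} blows up, and the limit passage in your last paragraph starts from no uniform estimate at all.

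The paper avoids this by using a \emph{parabolic} potential: it solves $\partial_t h-\Delta h=\mu$ with zero data, for which measure-data theory gives $\nabla h\in L^s(Q_T)$ in the stated range, with $\|\nabla h\|_{L^s}\le C\|\mu\|_{\mathcal M}$. Writing $w:=u-h$, the equation becomes $\partial_t w-\divergence A(\cdot,\nabla w+\nabla h)=-\divergence(\nabla h)$, which is now genuinely of the form covered by Theorem~\ref{thweak} (in its inhomogeneous guise, with the shift of the nonlinearity absorbed as in Section~\ref{sec:pf}). The point is that the heat operator correctly distributes the time-singular part of $\mu$ between $\partial_t h$ and $\divergence\nabla h$; a purely spatial inverse divergence cannot do this.
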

%
%

%

Theorem~\ref{thweak} gives the desired optimal result. It follows automatically from the following more general weighted case. Not only the following  result is more general, but it is in fact the key for proving the existence result, since the weighted $L^2$ theory is crucial in our approach. Below, $\omega$ denotes a weight in a  class $\Acal_q$, whereas $\omega' = \omega^{-(q'-1)}$. For more details on Muckenhoupt weights and related spaces, see Section~\ref{S2}.
\begin{theorem}\label{th}
Let $q\in (1,\infty)$ be arbitrary, $\omega \in \Acal_q$ be a Muckenhoupt weight  and $\Omega \in \mathcal{C}^1$. Let ${A}$ satisfy Assumption~\ref{ass:A}, $\tilde{g}$ satisfies Assumption~\ref{ass:C} with $g\in L^1(0,T;W^{1,1}(\Omega;\mathbb{R}^N))$, $\nabla g \in L^q_{\omega}(Q_T;\mathbb{R}^{n\times N})$ and $F\in L^q_{\omega}(Q_T;\mathbb{R}^{n\times N})$ and assume that $f\in L^q_{\omega} (Q_T;\mathbb{R}^{n\times N})$. Then  \eqref{eq:sysA} admits a weak solution $u$ fulfilling
\begin{equation}\label{eq:opt}
\norm{u-g}_{L^1(0,T;W^{1,1}_0(\Omega;\mathbb{R}^{N}))} \; + \norm{\nabla u}_{L^q_{\omega}(Q_T;\mathbb{R}^{n\times N})}\le C \left(1+ \norm{|f| + |\nabla g|+|F|}_{L_\omega^{q} (Q_T)}  \right).
\end{equation}
In addition, the estimate \eqref{eq:opt} holds true for {any} $u \in L^s (0,T; W^{1,s} (\Omega;\RN))$ with an $s>1$, solving  \eqref{eq:sysA} and fulfilling $u=g$ on $(0,T)\times \Omega$. The multiplicative constant $C$ depends only on dimensions $n, N$, $q$, the $\mathcal{C}^1$-modulus of $\Omega$, the constant $A_q(\omega)$ and the quantities in Assumption~\ref{ass:A}.
In addition, if Assumption~\ref{ass:B} is fulfilled, then for any $s>1$ the solution $u$ is unique in the $L^s(0,T; W^{1,s}(\Omega;\RN))$ class.
\end{theorem}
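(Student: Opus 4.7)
My plan is to combine the weighted linear theory (Theorem~\ref{thm:lin}) with the linear-at-infinity decomposition \eqref{eq:ass1} as the backbone of all three assertions. Existence will be produced by an $L^2$-approximation, with the nonlinear term recovered in the limit through the refined parabolic Lipschitz truncation of Theorem~\ref{thm:lipseq}; uniqueness will follow from a linearisation argument enabled by Assumption~\ref{ass:B}.

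For the a priori estimate, given any solution $u\in L^s(0,T;W^{1,s}(\Omega;\RN))$ with $s>1$ satisfying \eqref{eq:WL} and $u=g$ on $(0,T)\times\partial\Omega$, I will set $v:=u-g$ and use \eqref{eq:WL} together with \eqref{represent} to rewrite the problem as a linear system for $v$ with zero initial-boundary data,
\begin{equation*}
\partial_t v-\divergence(\tilde A(z)\nabla v)=-\divergence\bigl(f+F+\tilde A(z)\nabla g-[\tilde A(z)\nabla u-A(z;\nabla u)]\bigr).
\end{equation*}
Theorem~\ref{thm:lin} applied with the given weight $\omega\in\Acal_q$ bounds $\|\nabla v\|_{L^q_\omega}$ by the $L^q_\omega$-norms of $f,F,\nabla g$ plus that of $\tilde A\nabla u-A(\cdot;\nabla u)$; the latter is controlled by \eqref{eq:ass1}, which gives $|A(z;Q)-\tilde A(z)Q|\le\varepsilon|Q|+K_\varepsilon$ for arbitrarily small $\varepsilon$. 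After substituting $\nabla u=\nabla v+\nabla g$ I absorb the resulting $\varepsilon\|\nabla v\|_{L^q_\omega}$-piece into the left-hand side and recover \eqref{eq:opt}. Crucially, this argument requires no self-consistency on $u$, so it simultaneously delivers the uniqueness-style statement of the theorem.

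For existence I will truncate and mollify $(f,F,\nabla g)$ to obtain $(f_k,F_k,g_k)\in L^2(Q_T)$ converging in $L^q_\omega$ and uniformly bounded there. Classical monotone operator theory yields weak solutions $u_k\in L^2(W^{1,2})$ of the corresponding \eqref{eq:sysA}, on which the a priori step is rigorous, so $\nabla u_k$ is uniformly bounded in $L^q_\omega$. Passing to a weak limit $\nabla u_k\rightharpoonup\nabla u$ in $L^q_\omega$, the linear terms in \eqref{eq:WL} behave trivially; the real difficulty is identifying $A(\cdot;\nabla u_k)\rightharpoonup A(\cdot;\nabla u)$. To this end I will apply Theorem~\ref{thm:lipseq} to $u_k-u$: its Lipschitz truncation is admissible as a test function in both the $u_k$- and $u$-equations thanks to the fine control of its distributional time derivative, which is precisely what allows $A(\cdot;\nabla u)\in L^{q'}_{\omega'}$ to couple with $L^q_\omega$-gradients and close the duality gap that blocks the standard approach. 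Testing, subtracting, and invoking monotonicity \eqref{ass:mon} in Minty fashion will yield
\begin{equation*}
\int_{\{|\nabla u_k|+|\nabla u|\le\lambda\}}\bigl(A(\cdot;\nabla u_k)-A(\cdot;\nabla u)\bigr)\cdot(\nabla u_k-\nabla u)\,\dz\to 0,
\end{equation*}
which forces $\nabla u_k\to\nabla u$ almost everywhere on these sets; the reverse Hölder self-improvement of $\Acal_q$ supplies the equi-integrability needed to pass to the limit in $A(\cdot;\nabla u_k)$. The delicate coupling of the parabolic Lipschitz approximation with the Muckenhoupt weight is where I expect the hardest work.

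For uniqueness, Assumption~\ref{ass:B} upgrades \eqref{eq:ass1} to $|A(z;Q_1)-A(z;Q_2)-\tilde A(z)(Q_1-Q_2)|\le\varepsilon|Q_1-Q_2|$ whenever $|Q_1|+|Q_2|\ge K_\varepsilon$. For $u_1,u_2\in L^s(W^{1,s})$ with data linked by \eqref{uniq:nonlin}, the difference $w:=u_1-u_2$ will satisfy a linear-principal-part system $\partial_t w-\divergence(\tilde A\nabla w)=\divergence R$ with homogeneous data and $|R|\le\varepsilon|\nabla w|+K_\varepsilon\chi_{\{|\nabla u_1|+|\nabla u_2|\le K_\varepsilon\}}|\nabla w|$. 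Applying Theorem~\ref{thm:lin} with a Muckenhoupt weight such as $\omega=(M|\nabla w|+1)^{s-2}$, so that both $\nabla w$ and $R$ sit in $L^2_\omega$, absorbing the $\varepsilon$-term and letting $\varepsilon\to 0$ forces $\nabla w\equiv 0$, hence $u_1=u_2$.
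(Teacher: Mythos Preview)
Your overall architecture matches the paper's---a~priori bounds via Theorem~\ref{thm:lin} and \eqref{eq:ass1}, existence by $L^2$-approximation plus Lipschitz truncation, uniqueness by linearisation under Assumption~\ref{ass:B}---but two of the three steps have genuine gaps.

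\textbf{Identification of the nonlinear limit.} The displayed Minty-type integral already contains $A(\cdot;\nabla u)$, which is exactly what is yet to be identified; at this stage the limit equation only carries an abstract weak limit $\overline A$. More seriously, deducing $\nabla u_k\to\nabla u$ a.e.\ from vanishing of the monotonicity integral requires \emph{strict} monotonicity, which Assumption~\ref{ass:A} does not provide. The paper sidesteps both issues: it introduces an auxiliary $\Acal_2$ weight $\omega_0:=(1+Mf)^{q_0-2}$ (Theorem~\ref{thm:lipseq} is an $L^2_{\omega_0}$ statement and will not mesh with your proposed $L^q_\omega\times L^{q'}_{\omega'}$ duality), uses Chacon's biting lemma to extract sets $E_j$ on which $(|A^k|^2+|\nabla u^k|^2)\omega_0$ is equiintegrable, combines \ref{LS2}--\ref{LS4} to show $\int_{E_j}A^k\cdot\nabla(u^k-u)\,\omega_0\to 0$, and then applies the weighted Minty trick of Lemma~\ref{lemma:Minty} directly to conclude $\overline A=A(\cdot;\nabla u)$---without ever needing pointwise convergence of gradients.

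\textbf{Uniqueness.} After absorbing the $\varepsilon$-term you obtain only $\|\nabla w\|_{L^2_\omega}^2\le C(\varepsilon)\,\omega(Q_T)$, which is a bound, not zero; sending $\varepsilon\to 0$ is harmless on the left but makes $C(\varepsilon)\to\infty$ on the right. With your choice $\omega=(M|\nabla w|+1)^{s-2}$ this bound merely reproduces $\nabla w\in L^s$, which was the hypothesis. The paper instead takes $\omega^k=\min\{1,k[(M\nabla u_1)^{s-2}+(M\nabla u_2)^{s-2}]\}$, whose $\Acal_2$ constants are uniform in $k$ and which satisfies $\omega^k\nearrow 1$. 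The absorption then gives $\|\nabla w\|_{L^2_{\omega^k}}\le C$ uniformly; monotone convergence upgrades this to $\nabla w\in L^2(Q_T)$; and only then can one test the difference equation with $w$ itself and finish by Gronwall. A smaller but related circularity occurs in your a~priori step: applying Theorem~\ref{thm:lin} with $\omega$ presupposes $\tilde A\nabla u-A(\cdot;\nabla u)\in L^q_\omega$, i.e.\ $\nabla u\in L^q_\omega$, which is the conclusion. The paper handles this by first using truncated weights $\min\{n,\omega_0\}$ (bounded, so the $L^2$ regularity of the approximants suffices) and then letting $n\to\infty$.
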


\section{Definitions and auxiliary results}
\label{S2}

\subsection{Muckenhoupt weights and the maximal function}
This section directly rewrites the respective section of \cite{BulBurSch16} to the parabolic setting.
We start this part by recalling the definition of  the 'parabolic` Hardy-Littlewood maximal function. For any $f\in L^1(\setR^{n+1})$ we define
\[
Mf(z):=\sup_{R>0}\dashint_{Q_R(z)}\abs{f(y)}\dy \quad \textrm{with} \quad \dashint_{Q_R(z)}\abs{f(y)}\dy:=\frac{1}{|Q_R(z)|}\int_{Q_R(z)}\abs{f(y)}\dy,
\]
where $B_R(x)$ denotes a ball with radius $R$ centred at $x\in \setR^n$ and $Q_R(z)$ is the respective (parabolic) cylinder, i.e.,  $Q_R(z) := B_R(x) \times (t - R^2, t)$, where $z:=(x,t)\in \setR^{n+1}$. Since such cylinders form a regular family in the sense of Stein, all the standard theory for the Hardy-Littlewood maximal function is valid with respect to them. In this paper we consider our PDE on a bounded domain, hence the involved functions, when maximal function is used, need to be appropriately extended. In most cases extension by $0$ suffices and then we do not distinguish in notation a function $f$ and its trivial extension.

Next, we call $\omega:\setR^{n+1} \to \setR$ a weight, if it is a measurable function that is almost everywhere finite and positive. For such a weight and an arbitrary measurable set $C\subset \setR^{n+1}$ we denote the space $L^p_{\omega}(C)$ with $p\in [1,\infty)$ as
$$
L^p_{\omega}({C}):=\biggset{u:{C} \to \setR^N; \; \norm{f}_{L^p_\omega}
:= \bigg(\int_{{C}} |u(z)|^p\omega(z)\dz\bigg)^{\frac 1p} <\infty}.
$$
In the case $\omega\equiv 1$, the above $L^p_{\omega}$ reduces to the standard Lebesgue space $L^p$. Throughout the paper, we also use the standard notation for Bochner, Sobolev and Sobolev--Bochner spaces. Next, let us introduce classes of Muckenhoupt weights. Note here that our weights are defined on the whole space~$\setR^{n+1}$. For an arbitrary  $p\in [1,\infty)$, we say that a weight $\omega$
belongs to the Muckenhoupt class $\Acal_p$ if and only if
there exists a positive constant $\alpha$ such that for every parabolic cylinder $Q
\subset \setR^{n+1}$ the following holds
\begin{alignat}{2}
  \label{defAp2}
  \left(\dashint_Q
    \omega\dz\right)\left(\dashint_Q\omega^{-(p'-1)}\dz\right)^\frac{1}{p'-1}
  &\le \alpha & \qquad\qquad&\text{if $p \in (1,\infty)$},
  \\
  \label{defA1}
  M\omega(z)&\le \alpha\, \omega(z) &&\text{if $p=1$}.
\end{alignat}
In what follows, we denote by
$A_p(\omega)$ the smallest constant $\alpha$ for which the
inequality~\eqref{defAp2}, resp.~\eqref{defA1}, holds. Due to the celebrated result of Muckenhoupt, see \cite{Mu72}, we know
that $\omega \in \mathcal{A}_p$ is for $1<p<\infty$ equivalent to the
existence of a constant $A'$, such that  for all $f\in L^p(\setR^{n+1})$
\begin{equation}\label{defAp}
\int_{\setR^d} \abs{Mf}^p\omega\dz\leq A'\,\int_{\setR^d} \abs{f}^p\omega\dz.
\end{equation}
Further, if $p \in [1,\infty)$ and $\omega \in \mathcal{A}_p$, then we have an embedding  $L^p_\omega ({C})
\embedding L^1_{\loc}({C})$, since for all cylinders~$Q \subset \setR^{n+1}$ there holds
\begin{align*}
  \dashint_Q \abs{f}\dz &\leq \bigg(\dashint_Q \abs{f}^p
  \omega\dz\bigg)^{\frac 1p} \bigg(\dashint_Q
  \omega^{-(p'-1)}\dz\bigg)^{\frac 1{p'}} \leq \big(
{A}_p(\omega)\big)^{\frac 1p} \bigg( \frac{1}{\omega(Q)}
  \int_Q \abs{f}^p \omega\dz\bigg)^{\frac 1p},
\end{align*}
where $\omega(Q)$ denotes $\int_Q \omega \dz$.
In particular, the distributional derivatives of all $f \in
L^p_\omega$ are well defined. Next, let us summarise some properties of Muckenhoupt weights in the following lemma.
\begin{lemma}[Lemma 1.2.12 in \cite{Tur00}]
  \label{cor:leftopen}
  Let $\omega\in \Acal_p$ for some $p\in [1,\infty)$. Then $\omega\in
  \Acal_q$ for all $q\ge p$. Moreover, there
  exists $s=s(p,A_p(\omega))>1$ such that $\omega \in
  L^s_{\loc}(\setR^d)$ and we have the reverse H\"{o}lder inequality,
  i.e.,
  \begin{equation}
    \left(\dashint_Q \omega^s \dz\right)^{\frac{1}{s}}\le
    C(d, A_p(\omega))\dashint_Q \omega \dz. \label{reversom}
  \end{equation}
  Further, if $p \in (1,\infty)$, then there exists
 $\sigma=\sigma(p,A_p(\omega)) \in [1,p)$ such that
  $\omega\in \mathcal{A}_\sigma$. In addition,   $\omega \in \mathcal{A}_p$ is equivalent to $\omega^{-(p'-1)} \in
  \mathcal{A}_{p'}$.
\end{lemma}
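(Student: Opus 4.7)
My plan is to deduce the four assertions (monotonicity in $p$, reverse H\"older, left-openness, duality) in a chain: monotonicity and duality are algebraic rearrangements of the $\Acal_p$ condition, the reverse H\"older inequality is the genuinely analytic step, and left-openness is extracted from reverse H\"older applied to the dual weight.

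\textbf{The easy parts (monotonicity and duality).} First I would rewrite the $\Acal_p$ condition using $p'-1=1/(p-1)$ as
\[
\Big(\dashint_Q\omega\,\dz\Big)\Big(\dashint_Q\omega^{-1/(p-1)}\,\dz\Big)^{p-1}\le \alpha.
\]
For $q\ge p$ one has $1/(q-1)\le 1/(p-1)$, and Jensen applied to the concave function $t^{(p-1)/(q-1)}$ yields
\[
\Big(\dashint_Q\omega^{-1/(q-1)}\,\dz\Big)^{q-1}\le \Big(\dashint_Q\omega^{-1/(p-1)}\,\dz\Big)^{p-1},
\]
so $\omega\in\Acal_q$ with the same constant. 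For duality I would set $\tilde\omega:=\omega^{-(p'-1)}$, note that $\tilde\omega^{-1/(p'-1)}=\omega$, and observe that the $\Acal_{p'}$ expression for $\tilde\omega$ is exactly the $\Acal_p$ expression for $\omega$ raised to the power $1/(p-1)$; hence $A_{p'}(\tilde\omega)=A_p(\omega)^{1/(p-1)}$.

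\textbf{The main step: reverse H\"older.} For a cylinder $Q$ and $\lambda_0:=\dashint_Q\omega\,\dz$, I would run a Calder\'on--Zygmund stopping time argument for $\omega$ at the levels $\lambda=2^k\lambda_0$, producing pairwise disjoint subcylinders $\{Q_j^k\}\subset Q$ on which the average of $\omega$ has just exceeded $\lambda$. The critical task is to use the $\Acal_p$ inequality to convert the doubling of $\omega$ into a doubling of measure in the reverse direction, i.e.\ to establish the geometric decay
\[
\big|\{z\in Q\,:\,\omega(z)>\lambda\}\big|\le C\Big(\frac{\lambda_0}{\lambda}\Big)^{\delta}|Q|
\]
with $\delta=\delta(n,p,A_p(\omega))>0$. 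This is the classical Coifman--Fefferman $\Acal_\infty$-decay: on each stopping cylinder the $\Acal_p$ condition forces $|Q_j^k|$ to be a controlled fraction of $|Q_j^{k-1}|$, and iteration gives the geometric factor. Plugging this into the layer-cake identity
\[
\dashint_Q\omega^{1+\varepsilon}\,\dz=(1+\varepsilon)\int_0^\infty \lambda^{\varepsilon}\frac{|\{\omega>\lambda\}\cap Q|}{|Q|}\,d\lambda,
\]
splitting at $\lambda=\lambda_0$ and choosing any $\varepsilon<\delta$ closes the integral and yields $(\dashint_Q\omega^{s}\,\dz)^{1/s}\le C\,\dashint_Q\omega\,\dz$ with $s:=1+\varepsilon$.

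\textbf{Left-openness and main obstacle.} For self-improvement from the left I would apply the reverse H\"older inequality not to $\omega$ but to its dual weight $\omega^{-1/(p-1)}\in\Acal_{p'}$ (available by the duality step). This yields $s'>1$ with
\[
\dashint_Q\omega^{-s'/(p-1)}\,\dz\le C\Big(\dashint_Q\omega^{-1/(p-1)}\,\dz\Big)^{s'}.
\]
Setting $\sigma:=1+(p-1)/s'\in(1,p)$ converts the exponent $s'/(p-1)$ into $1/(\sigma-1)$, and combining with the $\Acal_p$ condition for $\omega$ gives
\[
\Big(\dashint_Q\omega\,\dz\Big)\Big(\dashint_Q\omega^{-1/(\sigma-1)}\,\dz\Big)^{\sigma-1}\le C\,A_p(\omega),
\]
so $\omega\in\Acal_\sigma$. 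The only genuine obstacle is the reverse H\"older inequality: the monotonicity, duality and left-openness reductions are algebraic, but extracting a decay exponent $\delta$ that depends only on $p$ and $A_p(\omega)$ rules out soft compactness and forces the explicit CZ iteration sketched above.
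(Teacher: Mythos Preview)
Your proof is correct and follows the standard route (Jensen for monotonicity, the symmetric rewriting for duality, Calder\'on--Zygmund/Coifman--Fefferman iteration for reverse H\"older, and reverse H\"older on the dual weight for left-openness). Note, however, that the paper does not supply its own proof of this lemma: it is quoted verbatim as Lemma~1.2.12 of Turesson~\cite{Tur00} and used as a black box. So there is nothing in the paper to compare against beyond the citation; your argument is precisely the classical one that reference contains.
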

In this paper, we also use the following improved embedding $L^p_\omega ({C})
\embedding L^q_{\loc}({C})$, valid for all $\omega \in \mathcal{A}_p$ with $p \in
(1,\infty)$ and certain $q\in [1,p)$ depending only on $A_p(\omega)$. Such an embedding can be deduced by a direct application of Lemma~\ref{cor:leftopen}. Indeed, since $\omega \in \mathcal{A}_p$, we have $\omega^{-(p'-1)} \in
\mathcal{A}_{p'}$. Thus, via Lemma~\ref{cor:leftopen}, there exists $s=s(A_p(\omega))>1$ such that
\begin{align*}
  \left(\dashint_Q \omega^{-s(p'-1)} \dz\right)^{\frac{1}{s}}\le
  C(A_p(\omega))\dashint_Q \omega^{-(p'-1)} \dz.
\end{align*}
Consequently, for $q:= \frac{sp}{p+s-1} \in (1,p)$ we can use the H\"{o}lder inequality to deduce that
\begin{align}
  \label{eq:lqprop}
  \begin{aligned}
    \bigg( \dashint_Q \abs{f}^q\dz\bigg)^{\frac 1q} &\leq
    \bigg(\dashint_Q \abs{f}^p \omega\dz\bigg)^{\frac 1p}
    \bigg(\dashint_Q \omega^{-s(p'-1)}\dz\bigg)^{\frac 1{s p'}}\\
&\leq C(A_p(\omega)) \bigg( \frac{1}{\omega(Q)} \int_Q
    \abs{f}^p \omega\dz\bigg)^{\frac 1p},
  \end{aligned}
\end{align}
which implies the desired embedding.

The next result makes another link between maximal functions and  $\Acal_p$-weights.
\begin{lemma}[See pages 229--230 in \cite{86:_real} and page 5 in \cite{Tur00}]\label{cor:dual}
Let $f \in L^1_{\loc}(\setR^{n+1})$ be such that $Mf<\infty$ almost everywhere in $\setR^{n+1}$. Then for all $\alpha \in (0,1)$ we have $(Mf)^{\alpha}   \in \Acal_1$. Furthermore, for all $p\in (1,\infty)$ and all $\alpha\in (0,1)$ there holds
$(Mf)^{-\alpha(p-1)}\in \Acal_p$.
\end{lemma}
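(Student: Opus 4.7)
The plan is to establish the two assertions in turn: the first via a Coifman--Rochberg-type splitting argument adapted to parabolic cylinders, and the second as an immediate consequence of the duality already recorded in Lemma \ref{cor:leftopen}.

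For the first claim, fix $\alpha \in (0,1)$ and a parabolic cylinder $Q$. I would verify the averaged form of the $\Acal_1$ condition, namely
\[
\dashint_Q (Mf)^\alpha\dz \le C\, \operatorname{ess\,inf}_{Q}(Mf)^\alpha,
\]
with $C$ independent of $Q$. The strategy is to decompose $f = f_1 + f_2$ with $f_1 := f\chi_{2Q}$ and exploit the subadditivity $(Mf)^\alpha \le (Mf_1)^\alpha + (Mf_2)^\alpha$, valid since $\alpha \in (0,1)$. For the local piece, Kolmogorov's inequality (a direct consequence of the weak-type $(1,1)$ bound for $M$) would give
\[
\int_Q (Mf_1)^\alpha \dz \le \frac{C}{1-\alpha}|Q|^{1-\alpha}\Big(\int_{2Q}|f|\dz\Big)^\alpha,
\]
so that $\dashint_Q (Mf_1)^\alpha \le C\big(\dashint_{2Q}|f|\big)^\alpha \le C (Mf(z_0))^\alpha$ for every $z_0 \in Q$. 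For the tail $f_2$, supported outside $2Q$, I would show that $Mf_2$ is essentially constant on $Q$: any parabolic cylinder based at a point of $Q$ whose average of $|f_2|$ is nontrivial must be of size comparable to $Q$, hence can be enlarged to one centred at a fixed $z_0 \in Q$ while preserving its measure up to a dimensional constant, giving $Mf_2(z_1) \le C\,Mf(z_0)$ for every $z_1 \in Q$. Combining both pieces and passing to the essential infimum over $z_0 \in Q$ then delivers the claim.

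For the second claim, Lemma \ref{cor:leftopen} supplies the inclusion $\Acal_1 \subset \Acal_{p'}$, so Step 1 already yields $(Mf)^\alpha \in \Acal_{p'}$. Setting $\omega := (Mf)^{-\alpha(p-1)}$ and using $(p-1)(p'-1)=1$, one computes $\omega^{-(p'-1)} = (Mf)^{\alpha(p-1)(p'-1)} = (Mf)^\alpha \in \Acal_{p'}$. The equivalence $\omega \in \Acal_p \Longleftrightarrow \omega^{-(p'-1)} \in \Acal_{p'}$ from the same lemma then gives $\omega \in \Acal_p$ as desired.

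The principal technical point will be the \emph{essentially constant on $Q$} behaviour of $Mf_2$ in the parabolic setting, since parabolic cylinders $Q_R(z)=B_R(x)\times(t-R^2,t)$ are one-sided in time and anisotropic, obstructing a naive Euclidean-cube argument. However, as already noted in Section \ref{S2}, this family is a regular doubling family in the sense of Stein, so the standard covering/doubling argument goes through with constants depending only on $n$; no new ideas beyond careful bookkeeping of cylinder sizes are needed.
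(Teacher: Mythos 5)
The paper does not actually prove this lemma; it only cites Torchinsky and Turesson. Your argument --- Coifman--Rochberg splitting $f = f\chi_{2Q} + f\chi_{(2Q)^c}$ with Kolmogorov for the local piece and an engulfing estimate for the tail, then the duality route $(Mf)^\alpha \in \Acal_1 \subset \Acal_{p'}$ combined with $\omega^{-(p'-1)}=(Mf)^\alpha$ and the equivalence from Lemma~\ref{cor:leftopen} --- is exactly the proof those references give, and the exponent bookkeeping $(p-1)(p'-1)=1$ in the second step is correct.

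One point you flag as mere bookkeeping deserves a sharper look. The paper's maximal function is defined with genuinely one-sided cylinders $Q_R(z)=B_R(x)\times(t-R^2,t)$, and for that family the uncentered $A_1$ form you set out to prove, $\dashint_Q(Mf)^\alpha\le C\,\operatorname{ess\,inf}_Q(Mf)^\alpha$, is strictly stronger than the centered condition \eqref{defA1} and can in fact fail: if $f$ is supported at times larger than some $T_0$, then $Mf$ vanishes identically for times $<T_0$, so on any one-sided cylinder $Q$ straddling $T_0$ one has $\operatorname{ess\,inf}_Q(Mf)^\alpha=0$ while $\dashint_Q(Mf)^\alpha>0$. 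Invoking ``Stein regularity'' does not resolve this, because the one-sided family is not an engulfing regular family: a point $z_0$ in the early part of $Q$ cannot see the whole of $Q$ through any $Q_\rho(z_0)$.

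There are two ways to close this gap, both minor. Either one reads the lemma (as the paper evidently does elsewhere, e.g.\ in its linear-theory section) with two-sided or uncentered parabolic cylinders, for which your write-up is correct verbatim; or one keeps the one-sided cylinders and proves the centered condition \eqref{defA1} directly at the anchor: fix $Q=Q_R(z)$, bound the local term by $\dashint_{Q_{2R}(z)}\abs{f}\le Mf(z)$ (valid because $Q_{2R}(z)$ shares the same top time and is an admissible competitor), and for the tail note that any $Q_\rho(\zeta)$ with $\zeta\in Q$ that meets $\support(f_2)$ must have $\rho\gtrsim R$, hence (since $\zeta$ lies strictly earlier than $z$) satisfies $Q_\rho(\zeta)\subset Q_{C\rho}(z)$, giving $Mf_2(\zeta)\le C\,Mf(z)$. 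With either fix, the proof is correct and matches the cited literature.
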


We would like also  to point out that the maximum $\omega_1 \vee \omega_2$
and minimum $\omega_1 \wedge \omega_2$ of two $\Acal_p$-weights is again an
$\Acal_p$-weight. For $p=2$ we have simply $A_2(\omega_1 \wedge
\omega_2) \leq A(\omega_1) +A_2(\omega_2)$, due to the following straightforward computation
\begin{align}
  \label{eq:A2min}
  \begin{aligned}
    \dashint_Q (\omega_1 \wedge \omega_2) \dz \dashint_Q \frac1{\omega_1
      \wedge \omega_2}\dz &\leq \left[\bigg(\dashint_Q \omega_1\dz\bigg)
    \wedge \bigg( \dashint_Q\omega_2\dz \bigg)\right] \dashint_Q \left(
    \frac{1}{\omega_1} + \frac{1}{\omega_2} \right)\dz
    \\
    &\leq A_2(\omega_1) + A_2(\omega_2).
  \end{aligned}
\end{align}

\subsection{Convergence tools}
In order to identify the limit of approximate problems, possessing only minimal regularity information, we will use among others the following two tools.
\begin{lemma}[Chacon's Biting Lemma, see \cite{BallMurat:89}]\label{thm:blem}
   Let $\Omega$ be a bounded domain in $\setR^{n+1}$ and let
   $\{g^k\}_{k=1}^{\infty}$ be a bounded sequence in $L^1(\Omega)$. Then
   there exists a non-decreasing  sequence of measurable subsets
   $E_j\subset\Omega$ with $|\Omega \setminus E_j|\to 0$ as $j\to \infty$ such that $\{g^k\}_{k\in\mathbb{N}}$ is
   pre-compact in the weak topology of $L^1(E_j)$, for each $j\in\mathbb{N}$.
 \end{lemma}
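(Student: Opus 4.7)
The plan is to prove the biting lemma via the Dunford--Pettis characterization of weak pre-compactness in $L^1$ as uniform integrability. After replacing $\{g^k\}$ with a suitable (relabelled) subsequence, which is the standard convention for biting-type statements, the task reduces to constructing an increasing family of measurable sets $\{E_j\}$ with $|\Omega\setminus E_j| \to 0$ and verifying uniform integrability of $\{g^k|_{E_j}\}$ for each $j$. The central idea is to localize the possible concentration of mass carried by the sequence.

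Splitting $g^k$ into positive and negative parts allows the assumption $g^k \geq 0$. Then $\mu_k := g^k\,\dz$ is a sequence of finite positive Radon measures on $\bar{\Omega}$ with uniformly bounded total variation. By Banach--Alaoglu in $C(\bar{\Omega})^{*}$ (which is weak-$*$ sequentially compact on bounded sets since $C(\bar{\Omega})$ is separable), I extract a subsequence (relabelled) satisfying $\mu_k \rightharpoonup^{*} \mu$ for some finite positive Radon measure $\mu$. The Lebesgue--Radon--Nikodym decomposition yields $\mu|_\Omega = h\,\dz + \nu$, with $h \in L^1(\Omega)$ non-negative and $\nu \perp \dz$ concentrated on a Borel set $N \subset \Omega$ of Lebesgue measure zero. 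Using outer regularity I pick a decreasing sequence of open neighborhoods $V_j \supset N$ with $|\bar{V_j}| < 1/j$ and define $E_j := \Omega \setminus \bar{V_j}$ (open). These $E_j$ are non-decreasing with $|\Omega\setminus E_j| \leq 1/j \to 0$, and $E_j$ is disjoint from the concentration set $N$ of $\nu$.

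To verify uniform integrability of $\{g^k|_{E_j}\}$ I fix $\varepsilon > 0$ and a measurable $A \subset E_j$. Since $E_j$ is open with $A \subset E_j$, outer regularity furnishes an open $U$ with $A \subset U \subset E_j$ and $|U|$ close to $|A|$. I then pick a continuous $\phi : \bar{\Omega} \to [0,1]$ with $\mathbf{1}_A \leq \phi \leq \mathbf{1}_U$. Weak-$*$ convergence yields $\int g^k \phi\,\dz \to \int \phi\,d\mu$, and the limit is bounded by $\mu(U) = \int_U h\,\dz + \nu(U) = \int_U h\,\dz$, since $U \cap N = \emptyset$. Hence $\limsup_{k\to\infty} \int_A g^k\,\dz \leq \int_U h\,\dz$, which is smaller than $\varepsilon$ as soon as $|A|$ is small enough, by the absolute continuity of the integral of $h \in L^1$. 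The individual absolute continuity of $\int_A g^k\,\dz$ for the finitely many initial $k$ promotes the $\limsup$-bound to a bound uniform in $k$, delivering uniform integrability of $\{g^k|_{E_j}\}$ on $E_j$. Dunford--Pettis then closes the argument.

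The main obstacle, and the heart of the proof, is precisely this transfer step: weak-$*$ convergence of measures only tests against continuous functions, so passing from it to a uniform control of $\mu_k(A)$ for rough measurable $A \subset E_j$ requires careful approximation by continuous cut-offs. The decisive structural fact that makes the argument work is that, by construction, the singular part $\nu$ of the limit measure sits entirely inside the removed set $\bar{V_j}$; on $E_j$ the limit measure reduces to $h\,\dz$ with $h \in L^1$, and it is this absolute continuity of the limit that propagates uniformly to the approximating sequence and ultimately produces uniform integrability.
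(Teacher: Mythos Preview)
The paper does not supply a proof of this lemma; it is quoted as a known result from \cite{BallMurat:89}.

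Your attempt contains a genuine gap at its core. The decisive claim---that once $g^k\,dz \rightharpoonup^* \mu$ and $E_j$ avoids the support of the singular part of $\mu$, the restrictions $\{g^k|_{E_j}\}$ are uniformly integrable---is false. Take $\Omega=(0,1)$ and
\[
g^k := k \sum_{i=1}^{k} \chi_{(i/k - 1/k^2,\; i/k)},
\]
so that $\|g^k\|_{L^1}=1$. A Riemann-sum computation gives $\int g^k\phi \to \int_0^1 \phi$ for every $\phi\in C[0,1]$, hence $g^k\,dz \rightharpoonup^* dz$; the limit measure is purely absolutely continuous, so your construction yields $N=\emptyset$ and $E_j=\Omega$ for every $j$. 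Yet $\int_{\{g^k>M\}} g^k = 1$ whenever $k>M$, so $\{g^k\}$ is not uniformly integrable on $\Omega$. The failure is structural: concentration can be spread so finely across $\Omega$ that the weak-$*$ limit of the associated measures records no singular part at all, and removing the singular support of the limit therefore does not excise the obstruction to equi-integrability. The proofs in the literature (cf.\ \cite{BallMurat:89}) proceed instead via a direct exhaustion/diagonal argument on the level sets $\{|g^k|>M\}$, not through the Radon--Nikodym decomposition of a single weak-$*$ cluster point.

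A secondary technical issue: for a general measurable $A$ (which may be dense in $E_j$) there is no continuous $\phi$ with $\chi_A \le \phi \le \chi_U$; Urysohn's lemma requires the lower set to be closed. This step could in principle be repaired with Portmanteau-type inequalities, but the structural gap above cannot be fixed within your approach.
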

Note here that in our setting, via Dunford-Pettis theorem, the above pre-compactness of $g^k$ is equivalent  to the following equi-integrability condition:  for every $j\in \mathbb{N}$ and every $\varepsilon>0$ there exists a $\delta_\epsilon>0$ such that for all $A\subset E_j$ with $\abs{A}\leq \delta_\epsilon$ and all $k\in \mathbb{N}$ it holds
\begin{equation}\label{smallunif}
\sup_k\int_A\abs{g^k}\dz\leq \epsilon.
\end{equation}
The next tool is a generalisation of Minty method to the weighted setting.
\begin{lemma}[See pages 4263--4264 in \cite{BulBurSch16}]\label{lemma:Minty} Let $A$ satisfy Assumption~\ref{ass:A} and $\overline{A},C \in L^2_{\omega_0}(Q_T)$ with some $\omega_0 \in \Acal_2$. If
\[
0\leq \int_{Q_T} (\overline{A}-A(z, B))\cdot(C-B) \, \omega_0
\]
for all $B\in L^\infty (Q_T)$, then $\overline{A}(z) = A(z,C(z))$ almost everywhere in $Q_T$.
\end{lemma}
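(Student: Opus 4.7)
I would adapt Minty's monotonicity trick to the weighted setting. Because $C \in L^2_{\omega_0}(Q_T)$ need not be bounded, the canonical test $B = C - \lambda\phi$ with $\phi \in L^\infty$ is inadmissible in the hypothesis, so I would first truncate: set $C_k := C\,\chi_{\{|C|\leq k\}}$ and plug in $B_{k,\lambda} := C_k - \lambda\phi$, which lies in $L^\infty(Q_T;\RNn)$ for every $\lambda > 0$. Writing $C - B_{k,\lambda} = (C - C_k) + \lambda\phi$, the hypothesis splits as
\[
0 \leq \int_{Q_T}(\overline A - A(z,B_{k,\lambda}))\cdot (C - C_k)\,\omega_0\dz + \lambda \int_{Q_T}(\overline A - A(z,B_{k,\lambda}))\cdot \phi\,\omega_0\dz.
\]

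Next, I would pass to the limit $k\to\infty$. The linear growth bound in Assumption~\ref{ass:A} provides the $k,\lambda$-uniform majorant $c_1(|C|+\|\phi\|_\infty)+c_2$, which belongs to $L^2_{\omega_0}(Q_T)$ thanks to boundedness of $Q_T$ and $\omega_0 \in L^1_{\loc}$ (which follows from $\omega_0 \in \Acal_2$). Combined with Carathéodory continuity of $A$ and $C_k \to C$ a.e., weighted dominated convergence yields $A(\cdot, B_{k,\lambda}) \to A(\cdot, C - \lambda\phi)$ in $L^2_{\omega_0}$, while $C - C_k \to 0$ in $L^2_{\omega_0}$. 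Cauchy--Schwarz then eliminates the first integral in the limit, so dividing by $\lambda > 0$ and sending $\lambda \to 0^+$ (another application of weighted dominated convergence) delivers
\[
0 \leq \int_{Q_T}(\overline A - A(z,C))\cdot \phi\,\omega_0\dz \qquad \text{for every } \phi \in L^\infty(Q_T;\RNn).
\]
Replacing $\phi$ by $-\phi$ upgrades this to equality.

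Finally, to extract the pointwise conclusion, I would plug in $\phi := (\overline A - A(\cdot,C))\,\chi_{E_k}$ with $E_k := \{z \in Q_T : |\overline A(z)| + |A(z,C(z))| \leq k\}$; this is a valid $L^\infty$ test function and produces $\int_{E_k}|\overline A - A(\cdot,C)|^2\omega_0\dz = 0$. Since $\omega_0 > 0$ a.e.\ and $E_k \uparrow Q_T$, this forces $\overline A(z) = A(z, C(z))$ a.e.\ in $Q_T$. The main delicate point will be the double dominated-convergence step in $k$ and $\lambda$, which rests essentially on the finite $\omega_0$-mass of $Q_T$ together with the crude growth bound~\eqref{ass:lin}; monotonicity of $A$ enters only implicitly, encoded in the hypothesis.
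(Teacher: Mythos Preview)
Your argument is correct and is precisely the standard weighted Minty trick that the paper defers to \cite[pp.~4263--4264]{BulBurSch16}: truncate $C$ so that $B=C_k-\lambda\phi$ becomes admissible, pass $k\to\infty$ via the linear growth bound and weighted dominated convergence, then divide by $\lambda$ and let $\lambda\to 0^+$. The paper does not supply its own proof of this lemma, and your handling of the two limits and the final pointwise extraction via $\phi=(\overline A-A(\cdot,C))\chi_{E_k}$ matches the cited reference's approach.
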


\section{Weighted parabolic Lipschitz truncation}\label{S:Lip}

\noindent
This section is devoted to one of our key tools, namely the so-called parabolic Lipschitz approximation. It is essential to define and identify the nonlinear limit in monotone operator theory when the solution itself is not an admissible test function, which is precisely the main difficulty we deal with in this paper. The basis of the method can be traced back to \cite{KinLew02,DieRuzWol10,Buletall12,BreDieSch13,DieSchStrVer16} see also~\cite[Section~3.2]{BogDuzMin13}. We will follow the approach of~\cite[Theorem 1.1]{DieSchStrVer16}, but there are significant novelties, including a nontrivial extension into the setting of weighted spaces and a more delicate control of the evolutionary term (compare \ref{LS2} below). Our Lipschitz approximation result reads

\begin{theorem}
\label{thm:lipseq}
Let $\Omega \subset \mathbb{R}^n$ be Lipschitz, $T>0$ is given, $Q:=(0,T)\times \Omega$ and   $q\in (1,2)$ arbitrary. Let the sequences $G^k\in L^{q}(Q;\mathbb{R}^{n\times N})$ and $w^k\in L^q(0,T;W^{1,q}_0(\Omega;\mathbb{R}^{N}))$ satisfy
\begin{equation}\label{apkk}
\norm{G^k}_{L^{2}_{\omega}(Q)}+\norm{\nabla w^k}_{L^2_\omega(Q)}+\norm{G^k}_{L^{q}(Q)}+\norm{\nabla w^k}_{L^q (Q)}\leq C
\end{equation}
with an $\Acal_{2}$ weight $\omega$ as well as
\begin{align}
\label{eq:w}
\begin{aligned}
\int_Q w^k\cdot \partial_t \varphi - G^k \cdot \nabla \varphi \dz=0
\end{aligned}
\end{align}
 for all $\varphi \in \mathcal{C}^{0,1}_0((-\infty,T)\times \Omega)$.
Then for arbitrary $\Lambda>1$ there exists a sequence $\{w^k_{\Lambda}\}_{k=1}^{\infty}\subset L^q(0,T; W^{1,q}_0(\Omega;\mathbb{R}^N))$ such that:
\begin{enumerate}[label={\rm (LS\arabic{*})}, leftmargin=*]
\item \label{LS1} The sequence $w^k_{\Lambda}$ satisfies
$$
\|\nabla w^k_{\Lambda}\|_{L^{\infty}(Q)} + \|w^k_{\Lambda}\|_{\mathcal{C}^{\frac12}(\overline{Q})} + \|\partial_t w^k_{\Lambda} \cdot (w^k-w^k_{\Lambda})\|_{L^q(Q)}\le C\Lambda^{4^{\Lambda}}.
$$
  \item
  \label{LS2} We have the following $\Lambda$-independent estimates
   \begin{align*}
   \int_Q \abs{\nabla w^k_{\Lambda}}^q + \abs{\nabla w^k_{\Lambda}}^2\omega + \sqrt{\Lambda} \abs{\partial_t w^k_{\Lambda} \cdot (w^k-w^k_{\Lambda})}\omega\dz &\le C,\\
   \int_Q \abs{w^k_{\Lambda}}^p\dz &\le C\int_{Q}\abs{w^k}^p\dz,
   \end{align*}
      for any $p \in [1, \infty)$.
    \item\label{LS3} For all $\eta\in C^{0,1}_0(Q) $ there holds
    \begin{align*}
     \int_0^T\int_{\Omega} G^k\cdot \nabla (w^k_{\Lambda} \eta)\dz =-\frac12 \int_Q (|w^k_{\Lambda} |^2
      -2w\cdot w^k_{\Lambda}) \partial_t \eta \dz-\int_{Q} (\partial_t w^k_{\Lambda})
      (w^k_{\Lambda}-w)\eta \dz.
    \end{align*}
		\item \label{LS4} If we define the set $\mathcal{O}^k_{\Lambda}:=\{z\in Q; \; w^k(z)\neq w^k_{\Lambda}(z)\}$, then there holds
$$
\abs{\mathcal{O}^k_{\Lambda}}\le \frac{C}{\Lambda}.
$$
    \end{enumerate}
\end{theorem}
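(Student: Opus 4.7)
The plan is to adapt the parabolic Lipschitz truncation of~\cite{DieSchStrVer16} to the weighted Muckenhoupt setting, with a refined control on the distributional time derivative that is tailored to our duality strategy. First, I would introduce the parabolic bad set
\[
\mathcal{O}^k_{\Lambda} := \{M(|\nabla w^k|) > \Lambda\}\cup\{M(|G^k|) > \Lambda\},
\]
where $M$ is the parabolic Hardy-Littlewood maximal operator from Section~\ref{S2}. Boundedness of $M$ on $L^2_\omega$ (from~\eqref{defAp}) combined with Chebyshev's inequality yields $\omega(\mathcal{O}^k_{\Lambda}) \leq C\Lambda^{-2}(\|\nabla w^k\|_{L^2_\omega}^2 + \|G^k\|_{L^2_\omega}^2)$, while weak-type $(1,1)$ of $M$ together with the bounds~\eqref{apkk} gives $|\mathcal{O}^k_{\Lambda}|\leq C/\Lambda$, which is precisely~\ref{LS4}.

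Next, I would perform a parabolic Whitney covering $\{Q_i\}$ of $\mathcal{O}^k_{\Lambda}$ with cylinders of scale $r_i$ proportional to the parabolic distance from $Q_i$ to the complement, together with a subordinate partition of unity $\{\eta_i\}$. The truncation is then defined as
\[
w^k_{\Lambda} := w^k\,\mathbf{1}_{Q\setminus\mathcal{O}^k_{\Lambda}} + \sum_i \eta_i\,\langle w^k\rangle_i,
\]
with $\langle w^k\rangle_i$ a suitable local space-time average of $w^k$ over an enlarged cylinder containing $Q_i$, taken as zero whenever $Q_i$ touches the parabolic boundary so that zero boundary traces are preserved. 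The $L^\infty$-bound on $\nabla w^k_{\Lambda}$, the parabolic $\mathcal{C}^{1/2}$-H\"older bound, and the coarse $L^q$-estimate on $\partial_t w^k_{\Lambda}\cdot(w^k-w^k_{\Lambda})$ appearing in~\ref{LS1} follow by standard estimates on this construction; the explicit factor $\Lambda^{4^{\Lambda}}$ there is merely qualitative and reflects iterated cylinder overlaps.

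The core of the argument lies in~\ref{LS2} and~\ref{LS3}. For~\ref{LS2}, the $L^q$ and $L^2_\omega$ gradient bounds follow by splitting the integrals onto the good set, where $\nabla w^k_{\Lambda}=\nabla w^k$ and~\eqref{apkk} applies, and onto $\mathcal{O}^k_{\Lambda}$, where $|\nabla w^k_{\Lambda}|\lesssim\Lambda$ pointwise and the two-scale decays $|\mathcal{O}^k_{\Lambda}|\lesssim\Lambda^{-1}$, $\omega(\mathcal{O}^k_{\Lambda})\lesssim\Lambda^{-2}$ suffice together with Lemma~\ref{cor:leftopen}. The new $\sqrt{\Lambda}$-weighted bound on the evolutionary term is the crucial point: on each Whitney cylinder $Q_i$ one has pointwise $|\partial_t w^k_{\Lambda}|\lesssim \Lambda/r_i$ and, using the local equation $\partial_t w^k=\divergence G^k$ together with a parabolic Poincar\'e inequality, $|w^k-w^k_{\Lambda}|\lesssim \Lambda r_i$ on $Q_i$, so the product is pointwise $O(\Lambda^2)$; interpolating the weighted measure estimate of $\mathcal{O}^k_{\Lambda}$ between the two levels $\Lambda$ and $\Lambda^2$ (for which $\omega(\{M|\nabla w^k|>\Lambda^2\})\lesssim \Lambda^{-4}$) then extracts the extra gain $\Lambda^{-1/2}$. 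Property~\ref{LS3} is obtained by mollifying $\eta\,w^k_{\Lambda}$ in time to produce admissible test functions in~\eqref{eq:w}, integrating the resulting quadratic term by parts, and passing to the limit using the bounds in~\ref{LS1}.

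The main obstacle I anticipate is obtaining the sharp $\sqrt{\Lambda}$-estimate on the evolutionary term simultaneously with the weighted gradient bound. Doing so requires tracking both the unweighted and the $\omega$-weighted level-set decay of $|\nabla w^k|$ and $|G^k|$ at two thresholds and interpolating them against the parabolic Whitney scaling, whereas the unweighted construction in~\cite{DieSchStrVer16} only needed a single level and classical Calder\'on-Zygmund theory. Reconciling the double-level analysis with the boundary behaviour needed to preserve zero traces and with the cylinder overlaps is where the bulk of the technical work will lie.
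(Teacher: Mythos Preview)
Your construction fixes the truncation level at $\Lambda$ for every $k$, and this is where the argument breaks. With the single level $\Lambda$, the weighted Chebyshev bound gives only $\omega(\mathcal{O}^k_\Lambda)\lesssim \Lambda^{-2}$, and combining this with the pointwise estimate $|\partial_t w^k_\Lambda\cdot(w^k-w^k_\Lambda)|\lesssim \Lambda\,(\mathcal{M}(\nabla w^k)+\mathcal{M}(G^k))$ and Cauchy--Schwarz yields
\[
\int_Q |\partial_t w^k_\Lambda\cdot(w^k-w^k_\Lambda)|\,\omega\dz \lesssim \sqrt{\Lambda^2\,\omega(\mathcal{O}^k_\Lambda)}\ \lesssim C,
\]
with \emph{no} gain in $\Lambda$; your proposed ``interpolation between levels $\Lambda$ and $\Lambda^2$'' does not recover the missing $\Lambda^{-1/2}$, because the product $\partial_t w^k_\Lambda\cdot(w^k-w^k_\Lambda)$ is not pointwise $O(\Lambda^2)$ on the bad set (the second factor is controlled by the maximal function, which is unbounded there). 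Relatedly, the constant $\Lambda^{4^\Lambda}$ in \ref{LS1} is not an artifact of cylinder overlaps as you suggest; it is the signature of the mechanism you are missing.

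What the paper actually does is choose, for each $k$ separately, a level $\lambda_k\in\{\Lambda^{2^0},\Lambda^{2^1},\dots,\Lambda^{2^{m_0}}\}$ with $m_0\sim\Lambda$, by a pigeonhole argument on the dyadic annuli $\{\Lambda^{2^m}<\mathcal{M}(\nabla w^k)\le\Lambda^{2^{m+1}}\}$ (and similarly for $G^k$). Summing over the $m_0$ annuli and using the uniform $L^q\cap L^2_\omega$ bounds forces at least one annulus to carry mass $\le C/m_0\sim C/\Lambda$, and this yields the crucial improvement $\lambda_k^2\,\tilde\omega(\mathcal{O}_{\lambda_k})\le C/\Lambda$ rather than merely $\le C$. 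One then sets $w^k_\Lambda:=w^k_{\lambda_k}$; the range $\Lambda\le\lambda_k\le\Lambda^{2^{m_0}}$ is exactly what produces the $\Lambda^{4^\Lambda}$ in \ref{LS1}, and the improved level-set bound plugged into the weighted estimate for the time-derivative term delivers the $\sqrt{\Lambda}$ in \ref{LS2}. Without this $k$-dependent level selection your outline cannot reach either conclusion.
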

The rest of this section is devoted to the proof. First let us develop estimates that hold for a single couple of functions $(w,G)$, satisfying \eqref{eq:w}.
\subsection{Extension}
\label{ssec:extension}
It is convenient for our purpose to use functions which are defined on
the whole space~$\setR \times \setR^n$. Thus, for given $w\in L^q(0,T; W^{1,q}_0(\Omega; \mathbb{R}^N))$ and $G\in L^q(Q;\mathbb{R}^{n\times N})$ fulfilling for all $\varphi \in \mathcal{C}_0^{0,1}((-\infty,T)\times \Omega; \mathbb{R}^N)$
\begin{equation}
\label{eq:ext}
\int_0^T\int_{\Omega} \partial_t \varphi (z)\cdot w(z) - G(z)\cdot \nabla \varphi(z) \dz =0,
\end{equation}
let us define their extensions (keeping the same symbol)  onto $\mathbb{R} \times \mathbb{R}^n$ as follows (recall that $z=(x,t)$)
\begin{equation}\label{extension}
w(t,x):=\left\{
\begin{aligned}
&w(t,x),\\
&w(2T-t,x),\\
&0,
\end{aligned}
\right.
\quad G(t,x):=\left\{
\begin{aligned}
&G(t,x),\\
&-G(2T-t,x),\\
&0,
\end{aligned}\right.
\quad \textrm{ for }
\left\{
\begin{aligned}
(t,x)&\in (0,T)\times\Omega, \\
(t,x)&\in \times \Omega, \\
&\textrm{elsewhwere.}
\end{aligned}\right.
\end{equation}
Then, it directly follows from \eqref{eq:ext} that such an extension fulfills
\begin{equation}
\label{eq:wwhole}
\int_{\mathbb{R}^{n+1}} \partial_t \varphi \cdot w - G\cdot \nabla \varphi \dz =0
\end{equation}
for all $\varphi\in \mathcal{C}^1(\mathbb{R}^{n+1})$ that vanish outside $\Omega$. In addition, we have the estimate
\begin{equation}
\int_{\mathbb{R}^{n+1}}|\nabla w|^q + |G|^q \dz \le C\int_0^T \int_{\Omega} |\nabla w|^q + |G|^q \dz. \label{eq:ext3}
\end{equation}
Moreover, for $\omega \in \Acal_{2}$, let us introduce a new weight
\begin{align}\label{extensionw}
\tilde{\omega}(t,x):= \left\{ \begin{aligned} &\omega(t,x) &&\textrm{if }t\le T,\\
&\omega(2T-t,x) &&\textrm{if } t>T.
\end{aligned}
\right.
\end{align}
With this definition, it is easy to conclude that $\tilde{\omega} \in \Acal_{2}$ and $A_2(\tilde{\omega}) \le C A_2(\omega)$. In addition, we have that
\begin{equation}
\int_{\mathbb{R}^{n+1}}(|\nabla w|^2  + |G|^2)\tilde{\omega} \dz \le C\int_0^T \int_{\Omega} (|\nabla w|^2+ |G|^2) \omega \dz, \label{eq:ext4}
\end{equation}
provided that the right hand side is finite.

%
%
%


\subsection{Whitney covering}
\label{ssec:whitney}
In this section, we use the notation for parabolic cubes $Q$ and corresponding parabolic maximal function $\mathcal{M}$, see Section~\ref{S2}.
First, for given  $\lambda>0$, we define the \emph{bad} set~$\Oal$ as
\begin{align}
  \label{eq:Oal2}
  \Oal := \set{\mathcal{M}(\nabla w)>\lambda} \cup \set{  \mathcal{M}(G) >
    \lambda}.
\end{align}
Note that $\Oal$ is open. According to \cite[Lemma~3.1]{DieRuzWol10} there exists a countable
parabolic Whitney covering $\set{Q_j}_{j\in \mathbb{N}}= \set{I_j \times B_j}_{j\in \mathbb{N}}$ of~$\Oal$, where $B_j\subset \mathbb{R}^n$ are balls of radii $r_j := r_{B_j}$ and correspondingly $|I_j|=r_j^2$ and  the
following holds:
\begin{enumerate}[label={\rm (W\arabic{*})}, leftmargin=*]
\item\label{itm:whit1} $\bigcup_j\frac {1} {2} Q_j   \,=\, \Oal$,
\item\label{itm:whit2} for all $j\in \setN$ we have $8
  Q_j   \subset \Oal$ and $16 Q_j   \cap (\setR^{n+1}\setminus
  \Oal)\neq \emptyset$,
\item \label{itm:whit3} if $ Q_j   \cap Q_k   \neq \emptyset
  $ then $ \frac 12 r_k\le r_j\leq 2\, r_k$,
\item \label{itm:whit3b}$\frac 14 Q_j   \cap \frac 14Q_k   =
  \emptyset$  for all $j \neq k$,
\item \label{itm:whit4}  each $z=(x,t)\in  \Oal $ belongs to at most
  $120^{n+2}$ of the sets $4Q_j$
\end{enumerate}
\noindent Moreover,  there exists a partition of
unity $\set{\rho_j}_{j\in \mathbb{N}} \subset C^\infty_0(\setR^{n+1})$ such that
\begin{enumerate}[label={\rm (P\arabic{*})}, leftmargin=*]
\item \label{itm:P1} $\chi_{\frac{1}{2}Q_j  }\leq \rho_j\leq
  \chi_{\frac 34 Q_j  }$,
\item \label{itm:P3} $\norm{\rho_j}_\infty + r_j \norm{\nabla
    \rho_j}_\infty + r_j^2 \norm{\nabla^2 \rho_j}_\infty +
  r_j^2 \norm{\partial_t \rho_j}_\infty \leq C$
\end{enumerate}
and if for each $k \in \setN$ we define the set $A_k:= \set{ j \,:\, \frac 34
  Q_k   \cap \frac 34 Q_j   \neq \emptyset}$ then
\begin{enumerate}[label={\rm (P\arabic{*})}, leftmargin=*,start=3]
\item \label{itm:P4} $\sum_{j \in A_k} \rho_j = 1$ on $\frac 34 Q_k  $.
\end{enumerate}
Furthermore, we have  the following additional properties
\begin{enumerate}[label={\rm (W\arabic{*})}, leftmargin=*, start=6]
\item \label{itm:whit_fat34} If $j \in A_k$, then $\abs{\frac 34 Q_j
    \cap \frac 34 Q_k  } \geq  32^{-n-2}\max \set{\abs{Q_j  },
    \abs{Q_k  }}$.
\item \label{itm:whit_radii} If $j \in A_k$, then $\frac 12 r_k \leq
  r_j < 2 r_k$.
\item \label{itm:whit_sum} $\# A_k \leq 120^{n+2}$.
\end{enumerate}

\subsection{Approximation}
Here, for given $w$ and $G$ fulfilling \eqref{eq:wwhole} and the corresponding Whitney covering introduced in the previous section, we define the approximation $w_{\lambda}$. To this end let us first introduce the notation for weighted mean values of $w$. Thus, for $\psi\in L^1(\setR^n)$, we set
\[
\mean{w}_{\psi}=\frac{1}{\norm{\psi}_{L^1(\setR^{n+1})}}\int w \psi \dz
\]
and define the mean values $w_j$ corresponding to cubes $Q_j$ by
\begin{align}
  w_j  &:=
  \begin{cases}
    \mean{w}_{\rho_j} &\qquad \text{if $\frac 34 Q_j
      \subset (0,2T) \times \Omega$},
    \\
    0 &\qquad \text{else.}
  \end{cases}\label{def:meanvalue}
\end{align}
Finally, we define our approximation $\wal$ via the formula
\begin{align}
  \label{eq:def_ulambda}
  \wal(x,t) &:= w(x,t) - \sum_{j\in \mathbb{N}} \rho_j(t,x) (w(t,x) - w_j  ).
\end{align}
Due to the property \ref{itm:whit_sum} of the Whitney covering,  the sum is  well
defined for almost all $z\in \mathbb{R}^{n+1}$. In addition, due to the definition of mean values $w_j$ and the fact that $w\equiv 0$ outside of $(0,2T)\times \Omega$, we see that  $\sum_j \rho_j (w - w_j  )$ is zero outside
of~$(0,2T) \times \Omega$ as well. Consequently,  also~$\wal$ is zero outside of
$(0,2T) \times \Omega$. In fact, we even have
\begin{align}
  \label{eq:supp_bad_parts}
  \support(\rho_j (w - w_j  )) \subset \tfrac 34 Q_j
  \cap ((0,2T) \times \Omega).
\end{align}
Indeed, $\support \rho_j \subset \frac 34 Q_j  $, so the case
$\frac 34 Q_j   \subset (0,2T) \times \Omega$ is obvious. If $\frac
34 Q_j   \not\subset (0,2T) \times \Omega$, then $w_j  =0$
and the claim follows by $\support \rho_j \subset \frac 34
Q_j$ and $\support w \subset (0,2T) \times \Omega$. So, \eqref{eq:supp_bad_parts} follows.

We are ready to introduce the key lemmata. The first one is, up to minor modifications, proved in~\cite{DieSchStrVer16}. The second one is of the same character, but we provide a detailed proof.
\begin{lemma}[Lemma 3.1~\cite{DieSchStrVer16}]
Let $w$ and $G$ satisfy \eqref{eq:ext}, their extension be defined through \eqref{extension}, the Whitney covering be defined with the help of the set $\Oal$ as in \eqref{eq:Oal2} and mean values  be defined via \eqref{def:meanvalue}. Then
  \label{lem:diff_uj_uk}
  \begin{align*}
    \sum_{j \in A_k} \frac{\abs{w_j-w_k}}{r_j} &\leq c\, \sum_{j \in
      A_k} \dashint_{\frac 34 Q_j} \frac{\abs{w-w_j}}{r_j} \dz \leq
    c\, \lambda.
  \end{align*}
\end{lemma}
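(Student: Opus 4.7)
The plan is to proceed in three steps: a geometric triangle-inequality estimate based on the Whitney covering for the leftmost bound, a parabolic Poincaré inequality on a single Whitney cube $\tfrac34 Q_j$ for the passage to $|\nabla w|$ and $|G|$, and an appeal to the defining property of $\Oal$ via the maximal function to close with $C\lambda$.

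For the first inequality I fix $j\in A_k$. The intersection $\tfrac34 Q_j\cap\tfrac34 Q_k$ is nonempty by the definition of $A_k$ and, by \ref{itm:whit_fat34}--\ref{itm:whit_radii}, has measure comparable to $|Q_j|\sim|Q_k|$, with $r_j\sim r_k$. Integrating $|w_j-w_k|\le|w-w_j|+|w-w_k|$ over this intersection and dividing by $r_j$ gives
\begin{equation*}
\frac{|w_j-w_k|}{r_j}\le C\dashint_{\tfrac34 Q_j}\frac{|w-w_j|}{r_j}\dz+C\dashint_{\tfrac34 Q_k}\frac{|w-w_k|}{r_k}\dz.
\end{equation*}
Summing over $j\in A_k$, using \ref{itm:whit_sum} and observing that $k\in A_k$ so the second term is one summand of the right-hand side in the statement, yields the first inequality.

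For a single term of the right-hand side I split on the position of $\tfrac34 Q_j$. If $\tfrac34 Q_j\not\subset(0,2T)\times\Omega$ then $w_j=0$ by \eqref{def:meanvalue}, and since the extended $w$ vanishes outside $(0,2T)\times\Omega$ (see \eqref{extension}) a Poincaré inequality with zero trace gives $\dashint_{\tfrac34 Q_j}|w|/r_j\,\dz\le C\dashint_{2Q_j}|\nabla w|\,\dz$. If instead $\tfrac34 Q_j\subset(0,2T)\times\Omega$ then $w_j=\mean{w}_{\rho_j}$ and I would apply a parabolic Poincaré inequality tailored to this weighted mean: on each time slice the spatial oscillation is bounded by $r_j\dashint_{2Q_j}|\nabla w|\,\dz$, whereas the temporal oscillation of the spatial mean is extracted from \eqref{eq:wwhole} by testing against the tensor product of a spatial bump supported in $B_j$ and a smoothed characteristic of a subinterval of $I_j$, giving a bound $r_j\dashint_{2Q_j}|G|\,\dz$. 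Combining the two estimates,
\begin{equation*}
\dashint_{\tfrac34 Q_j}\frac{|w-w_j|}{r_j}\,\dz\le C\Bigl(\dashint_{2Q_j}|\nabla w|\,\dz+\dashint_{2Q_j}|G|\,\dz\Bigr).
\end{equation*}
By \ref{itm:whit2} there exists $z^{*}\in 16Q_j\setminus\Oal$, so $\mathcal{M}(\nabla w)(z^{*})\vee\mathcal{M}(G)(z^{*})\le\lambda$; since $2Q_j\subset 16Q_j\ni z^{*}$, both averages are bounded by $C\lambda$. Summation over $A_k$ together with \ref{itm:whit_sum} then completes the second inequality.

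The hard part will be the parabolic Poincaré estimate that couples the temporal oscillation of the spatial mean of $w$ to an $L^{1}$-average of $G$, uniformly over Whitney cubes that may touch $\partial Q_T$. The symmetric extension \eqref{extension} is designed precisely for this: \eqref{eq:wwhole} is then valid on all of $\setR^{n+1}$ against every spatial bump supported inside $\Omega$, so the test functions needed for the time-oscillation step are admissible regardless of where $Q_j$ sits relative to the parabolic boundary. Once this is set up, the argument follows the template of~\cite{DieSchStrVer16} with only routine modifications.
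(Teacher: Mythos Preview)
The paper does not supply its own proof of this lemma; it simply records that the statement is, up to minor modifications, Lemma~3.1 of~\cite{DieSchStrVer16} and moves on. Your outline is correct and reproduces precisely the argument one finds there: the first inequality is the standard Whitney triangle-inequality step exploiting \ref{itm:whit_fat34}--\ref{itm:whit_sum}, and the second is the parabolic Poincar\'e inequality on a Whitney cube (spatial Poincar\'e plus control of the time oscillation of spatial means through the equation \eqref{eq:wwhole}), followed by the good-point argument via \ref{itm:whit2} to bound the averages by $C\lambda$. So your proposal agrees with the reference the paper defers to; there is nothing to correct.
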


\begin{lemma}[Pointwise Poincar\'{e} inequality]
  \label{lem:Poinc}Let all assumptions of Lemma~\ref{lem:diff_uj_uk} be satisfied. Then there exists a constant $C$ depending only on $T$ and $\Omega$ such that for any $Q_j$ from the Whitney covering, any $k\in A_j$ and almost all  $z\in \frac34 Q_j$ there holds
  \begin{align}\label{poinc:point}
     \frac{\abs{w(z)-w_k}}{r_j} &\leq C(\mathcal{M}(\nabla w)(z) + \mathcal{M}(G)(z)).
  \end{align}
\end{lemma}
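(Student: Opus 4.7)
The plan is to combine a parabolic Poincar\'e inequality derived from the distributional identity \eqref{eq:wwhole} with a telescoping dyadic argument at $z$. Denote by $\langle w\rangle_Q := \dashint_Q w\,\dz$ the unweighted spacetime mean. By \ref{itm:whit_radii} we have $r_j \sim r_k$, and since $z \in \tfrac{3}{4}Q_j$ and $\operatorname{supp}\rho_k \subset \tfrac{3}{4}Q_k$, I fix a parabolic cube $\tilde Q = \tilde I \times \tilde B$ of radius $\tilde r \le C r_j$ containing both $z$ and $\operatorname{supp}\rho_k$. The first building block is the parabolic Poincar\'e inequality
\[
\dashint_Q |w - \langle w\rangle_Q|\,\dz \le C r \dashint_Q (|\nabla w| + |G|)\,\dz
\]
valid for any parabolic cube $Q = I \times B$ of radius $r$. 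Setting $U(s) := \dashint_B w(s,\cdot)$, I split $w - \langle w\rangle_Q = (w - U(t)) + (U(t) - \dashint_I U)$. Averaging in space, the first term is controlled by $C r\,\dashint_Q|\nabla w|$ via the classical spatial Poincar\'e inequality at each fixed time. For the second, testing \eqref{eq:wwhole} against $\eta(y)\psi(s)$ with $\eta$ a smooth approximation of $|B|^{-1}\chi_B$ shows that $U' = \int G\cdot \nabla\eta\,\dy$ distributionally on $\setR$; using $\|\nabla\eta\|_\infty \lesssim r^{-n-1}$ and $|I| = r^2$ one obtains $|U(t) - U(s)| \le C r \dashint_Q |G|\,\dz$, which yields the $|G|$-contribution after averaging over $t \in I$.

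Next I apply the telescoping argument. Choose a sequence of parabolic cubes $Q^0 = \tilde Q \supset Q^1 \supset \cdots$, each containing $z$, with radii $r_i = 2^{-i}\tilde r \to 0$. At any Lebesgue point $z$, $w(z) = \lim_i \langle w\rangle_{Q^i}$, hence
\[
w(z) - \langle w\rangle_{\tilde Q} = \sum_{i=0}^\infty \bigl(\langle w\rangle_{Q^{i+1}} - \langle w\rangle_{Q^i}\bigr).
\]
Each summand satisfies $|\langle w\rangle_{Q^{i+1}} - \langle w\rangle_{Q^i}| \le C\dashint_{Q^i}|w - \langle w\rangle_{Q^i}|\,\dz$, which by the parabolic Poincar\'e inequality and the inclusion $z \in Q^i$ (allowing us to dominate the integral averages by the parabolic maximal function at $z$) is at most $C r_i\bigl(\mathcal{M}(\nabla w)(z) + \mathcal{M}(G)(z)\bigr)$. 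Summing the geometric series $\sum_i r_i \le 2\tilde r \le C r_j$ gives $|w(z) - \langle w\rangle_{\tilde Q}| \le C r_j\bigl(\mathcal{M}(\nabla w) + \mathcal{M}(G)\bigr)(z)$.

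Finally, since $\rho_k/\|\rho_k\|_{L^1}$ is a probability density supported in $\tilde Q$ with $\|\rho_k\|_{L^1} \gtrsim |\tfrac{1}{2}Q_k| \sim |\tilde Q|$ (via \ref{itm:P1}) and $\|\rho_k\|_\infty \le C$, one has $|w_k - \langle w\rangle_{\tilde Q}| \le C \dashint_{\tilde Q} |w - \langle w\rangle_{\tilde Q}|\,\dz$, again controlled by the parabolic Poincar\'e inequality. Combining the two bounds and dividing by $r_j$ proves \eqref{poinc:point}. The case $\tfrac{3}{4}Q_k \not\subset (0,2T)\times\Omega$ (where $w_k = 0$ by \eqref{def:meanvalue}) is handled analogously by using that the extension \eqref{extension} forces $w$ to vanish on a substantial portion of $\tilde Q$ and invoking a zero-boundary variant of the parabolic Poincar\'e inequality. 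The main obstacle is the parabolic Poincar\'e inequality itself: the $|G|$-contribution must be extracted cleanly from the distributional equation via an appropriate spatial test function, which is the parabolic refinement of the classical pointwise Poincar\'e argument.
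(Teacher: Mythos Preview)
Your approach is correct and genuinely different from the paper's. The paper writes $w_j-w(z_0)$ as a single parabolic dilation integral $\int_0^1 \frac{d}{d\varepsilon}\big(\|\rho_j\|_1^{-1}\int \rho_j(x,t)\,w(x_0+\varepsilon(x-x_0),\,t_0+\varepsilon^2(t-t_0))\,dx\,dt\big)\,d\varepsilon$, which after the change of variables splits into a spatial piece $I_1$ bounded by $Cr_j\,\mathcal{M}(\nabla w)(z)$ and a temporal piece $I_2$ in which $\partial_t w$ is replaced by $\divergence G$ via \eqref{eq:wwhole}, yielding $Cr_j\,\mathcal{M}(G)(z)$. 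They then pass from $w_j$ to $w_k$ using Lemma~\ref{lem:diff_uj_uk} and the fact that $z\in\Oal$ forces $\lambda\le \mathcal{M}(\nabla w)(z)+\mathcal{M}(G)(z)$. Your route instead establishes a cube-level parabolic Poincar\'e inequality and feeds it into a dyadic telescope; this is the parabolic analogue of the classical chain argument and is more modular, while the paper's single-integral computation is more direct and avoids the telescoping sum altogether.

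Two points to tighten. First, the identity \eqref{eq:wwhole} is only available for test functions vanishing outside $\Omega$, so your smooth $\eta$ approximating $|B|^{-1}\chi_B$ is admissible only when $B\subset\Omega$; you should say this explicitly. Second, even in the case $\tfrac34 Q_k\subset (0,2T)\times\Omega$, your enlarged cube $\tilde Q$ and the first few telescoping cubes $Q^i$ need not lie inside $(0,2T)\times\Omega$, so the parabolic Poincar\'e is not directly available there. The fix is the one you sketch for the boundary case: on any $Q^i$ whose spatial ball meets $\Omega^c$, the zero extension of $w$ together with the Lipschitz property of $\Omega$ gives $|\langle w\rangle_{Q^i}|\le Cr_i\dashint_{Q^i}|\nabla w|$, which is even stronger than the oscillation bound and keeps the telescoping sum geometric. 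The paper sidesteps this issue in the interior case because its rescaled test function is automatically supported in $\tfrac34 B_j\subset\Omega$.
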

\begin{proof}
First of all, we notice that it is just enough to prove that
  \begin{align}\label{poinc:pointa}
     \frac{\abs{w(z)-w_j}}{r_j} &\leq C(\mathcal{M}(\nabla w)(z) + \mathcal{M}(G)(z)).
  \end{align}
Indeed, using  Lemma~\ref{lem:diff_uj_uk}, we have for all $k\in A_j$ that
$$
  \frac{\abs{w(z)-w_k}}{r_j} \leq \frac{\abs{w(z)-w_j}}{r_j}+\frac{\abs{w_j-w_k}}{r_j}\le   C\lambda + \frac{\abs{w(z)-w_j}}{r_j}.
$$
Since $z\in \Oal$, we can use \eqref{eq:Oal2} to conclude
$$
  \frac{\abs{w(z)-w_k}}{r_j} \leq C(\mathcal{M}(\nabla w)(z) + \mathcal{M}(G)(z)) + \frac{\abs{w(z)-w_j}}{r_j}.
$$
Thus, to show \eqref{poinc:point} for any $k\in A_j$, it suffices to show \eqref{poinc:pointa}.

Next we proceed formally, but all steps can be justified by using the proper mollification. For simplicity, let us assume that $Q_j:=B_{r_j}(0)\times (-r_j^2,0)$. In case that $\frac34 Q_j \subset (0,2T)\times \Omega$, we have for any $z_0 = (x_0,t_0) \in \frac34 Q_j$
$$
\begin{aligned}
&w_j-w(z_0) 
= \int_0^1 \frac{d}{d\varepsilon} \left(\frac{\int_{\mathbb{R}^{n+1}}\rho_j(x, t) w(x_0 +\varepsilon (x-x_0), t_0+\varepsilon^2(t -t_0))\dx \dt }{\int_{\mathbb{R}^{n+1}}\rho_j(x, t)\dx \dt} \right) \deps\\
&= \frac{1}{\|\rho_j\|_1} \int_0^1  \int_{\mathbb{R}^{n+1}}\rho_j(x, t) \nabla w(x_0 +\varepsilon (x-x_0), t_0+\varepsilon^2(t -t_0))\cdot (x-x_0)\dx \dt \deps\\
&\quad + \frac{2}{\|\rho_j\|_1} \int_0^1  \int_{\mathbb{R}^{n+1}}\rho_j(x, t) \partial_t w(x_0 +\varepsilon (x-x_0), t_0+\varepsilon^2(t -t_0))\varepsilon(t-t_0)\dx \dt \deps\\
&= \frac{1}{\|\rho_j\|_1} \int_0^1  \varepsilon^{-n-3}\int_{\mathbb{R}^{n+1}}\rho_j(\frac{x-x_0}{\varepsilon}, \frac{t-t_0}{\varepsilon^2}) \nabla w(x-\varepsilon x_0, t-\varepsilon^2 t_0)\cdot(x-(1+\varepsilon)x_0)\dx \dt \deps\\
&\quad + \frac{2}{\|\rho_j\|_1} \int_0^1   \varepsilon^{-n-3} \int_{\mathbb{R}^{n+1}}\rho_j(\frac{x +\varepsilon x_0 -x_0}{\varepsilon}, \frac{t + \varepsilon^2t_0 -t_0}{\varepsilon^2}) \partial_t w(x, t)(t  -t_0)\dx \dt \deps\\
&= \frac{1}{\|\rho_j\|_1} \int_0^1  \varepsilon^{-n-3}\int_{\mathbb{R}^{n+1}}\rho_j(\frac{x-x_0}{\varepsilon}, \frac{t-t_0}{\varepsilon^2}) \nabla w(x-\varepsilon x_0, t-\varepsilon^2 t_0)\cdot(x-(1+\varepsilon)x_0)\dx \dt \deps\\
&\quad -\frac{2}{\|\rho_j\|_1} \int_0^1   \varepsilon^{-n-3} \int_{\mathbb{R}^{n+1}}\frac{\partial}{\partial t} \left(\rho_j(\frac{x +\varepsilon x_0 -x_0}{\varepsilon}, \frac{t + \varepsilon^2t_0 -t_0}{\varepsilon^2})(t-t_0)\right)  w(x, t)\dx \dt \deps\\
&=:I_1 + I_2.
\end{aligned}
$$
Finally, we shall estimate terms $I_1$ and $I_2$. Using \ref{itm:P1}, the fact that $Q_j$ is centered in zero and that $x_0 \in B_{\frac34 r_j}(0)$, we obtain
$$
\begin{aligned}
|I_1|&\le  \frac{2^{n+2}}{r_j^{n+2}} \int_0^1  \varepsilon^{-n-3}\int_{Q_{\frac34 \varepsilon r_j}(x_0,t_0)}|\nabla w(x-\varepsilon x_0, t-\varepsilon^2 t_0)||x-(1+\varepsilon)x_0|\dx \dt \deps\\
&\le  \frac{3\cdot 2^{n+1}}{r_j^{n+1}} \int_0^1  \varepsilon^{-n-2}\int_{Q_{\frac34 \varepsilon r_j}(x_0,t_0)}|\nabla w(x-\varepsilon x_0, t-\varepsilon^2 t_0)|\dx \dt \deps\\
&\le  \frac{3\cdot 2^{n+1}}{r_j^{n+1}} \int_0^1  \varepsilon^{-n-2}\int_{Q_{2\varepsilon r_j}(x_0,t_0)}|\nabla w(x, t)|\dx \dt \deps\\
&\le  3\cdot 4^{n+2}r_j \int_0^1  \mathcal{M}(\nabla w)(z) \deps=3\cdot 4^{n+2}r_j \mathcal{M}(\nabla w)(z).
\end{aligned}
$$
To estimate $I_2$, we intend to use \eqref{eq:wwhole}. In order to do so we need to show that for $x\notin \Omega$ the function $\rho_j(\frac{x +\varepsilon x_0 -x_0}{\varepsilon}, \frac{t + \varepsilon^2t_0 -t_0}{\varepsilon^2})$ vanishes. Actually, we will show that it vanishes whenever $x\notin B_{\frac34 r_j}(0)$ and then due to the fact that $B_{\frac34 r_j}(0)\subset \Omega$ the claim follows. Indeed, this fact directly follows from the triangle inequality, the fact that $x_0\in B_{\frac34 r_j}$ and the property \ref{itm:P1} of $\rho_j$. Hence, we can use \eqref{eq:wwhole} and with the help of \ref{itm:P1} and \ref{itm:P3} we deduce that
$$
\begin{aligned}
|I_2|&\le  \frac{2}{\|\rho_j\|_1} \int_0^1   \varepsilon^{-n-4} \int_{\mathbb{R}^{n+1}} \left|\nabla \rho_j(\frac{x +\varepsilon x_0 -x_0}{\varepsilon}, \frac{t + \varepsilon^2t_0 -t_0}{\varepsilon^2})\right||G (x, t)| |t-t_0|\dx \dt \deps\\
&\le  \frac{C(n)}{r_j^{n+2}} \int_0^1   \varepsilon^{-n-4} \int_{Q_{\frac34 \varepsilon r_j}(x_0,t_0)}r_j^{-1}|G(x, t)| \varepsilon^2 r_j^2\dx \dt \le  C(n)r_j \mathcal{M}(G)(z).
\end{aligned}
$$
Combining the estimates for $I_1$ and $I_2$, we immediately obtain \eqref{poinc:point}.

In the case $\frac34 Q_j \nsubseteq (0,2T)\times \Omega$, we can in fact use almost the same procedure, since $w$ is extended outside $(0,2T)\times \Omega$ by zero, $\Omega$ is Lipschitz and thus we can use the proper version of the Poincar\'{e} inequality.
\end{proof}

At this point we have collected all the auxiliary results needed for the following theorem, which is a generalization of \cite[Theorem 1.1]{DieSchStrVer16}. The key extension, which will serve as the backbone for Theorem~\ref{thm:lipseq}, involves estimates in weighted spaces.
\begin{theorem}
  \label{thm:lip}
Let $\Omega \subset \mathbb{R}^n$ be Lipschitz, $T>0$ is given, $Q:=(0,T)\times \Omega$ and   $q\in (1,\infty)$ arbitrary. Assume that $G\in L^q(Q; \mathbb{R}^{n \times N})$ and $w\in L^q(0,T; W^{1,q}_0(\Omega;\mathbb{R}^N))$ satisfy \eqref{eq:ext}. Further, let $\omega$ be an $\Acal_{2}$ weight and define the extensions for $w$, $G$ and $\omega$ by \eqref{extension}. For arbitrary $\lambda>0$ let us define
\begin{align*}
\Oal:=\bigset{M(\nabla w)>\lambda}\cup\bigset{M(G)>\lambda}.
\end{align*}
Then there exists an approximation $\wal\in
  L^{\infty}(\mathbb{R};W^{1,\infty}_0(\Omega;\mathbb{R}^N))$  with the following properties:
  \begin{enumerate}[label={\rm (L\arabic{*})}, leftmargin=*]
  \item
  \label{itm:oal} $\wal=w$ on $({\Oal})^c\cup ((0,2T)\times \Omega)^c$, in particular $\wal=0$ on $\mathbb{R}^{n+1}\setminus (0,2T)\times \Omega$.
  \item
  \label{itm:stab}
  There is a constant $c$ depending only on $\Omega$ and $q$ such that
  \[
  \int_{(0,2T)\times\Omega}\abs{\nabla(w-\wal)}^q \dz= \int_{\Oal}\abs{\nabla (w-\wal)}^q \dz\leq c\int_{\Oal}\abs{\nabla w}^q+\abs{G}^q \dz.
  \]
     \item
  \label{itm:max} $\mathcal{M}(\nabla \wal)\leq c\,\lambda$, i.e., $\wal$ is
    Lipschitz continuous with respect to the spatial variable.
  \item
  \label{itm:lip}
  $\wal$ is Lipschitz continuous with respect to the
    parabolic metric, i.e.,
    \begin{align*}
      \abs{\wal(t,x) - \wal(s,y)} \leq c\, \lambda \max
      \biggset{ \abs{t-s}^\frac 12,\abs{x-y} }
    \end{align*}
    for all $(t,x),(s,y)\in (0,2T)\times \Omega$.
    \item\label{itm:integrationbyparts1} For all $\eta\in C^{0,1}_0(Q) $ there holds
    \begin{align*}
     \int_0^T\int_{\Omega} G\cdot \nabla (\wal \eta)\dz =-\frac12 \int_Q (|\wal |^2
      -2w\cdot \wal) \partial_t \eta \dz-\int_{\Oal} (\partial_t \wal)
      (\wal-w)\eta \dz.
    \end{align*}
		\item \label{itm:liptime1} For every $\lambda$ the quantity $\partial_t \wal \cdot
      (\wal-w)$ belongs to $L^1(\mathbb{R}^{n+1})$ and we have
		\[
\int \abs{\partial_t \wal
      (\wal-w)}^q\dz\leq c\lambda^{q}\int_{Q}\abs{\nabla w}^q+\abs{G}^q\dz.
\]
    \item \label{itm:weight} If $\nabla w, G \in L^2_\omega((0,T)\times \Omega)$ then we have
   \begin{align*}
      \int_{Q}\abs{\nabla w_{\lambda}}^2\omega \dz
      \leq c\int_{Q}(\abs{\nabla w}^2+\abs{G}^2)\omega \dz.
    \end{align*}
			\item\label{itm:liptime2}
			\[
			   \begin{aligned}
			\Bigabs{\int_Q (\partial_t \wal)
      (\wal-w)\omega \dz} &\leq  C\sqrt{\lambda^2\tilde{\omega}(\Oal)} \left(\int_{Q}(\abs{\nabla w}^2      +\abs{G}^2)\omega \dz\right)^{\frac12} \\
			   \int_Q \abs{w_{\lambda}}^p\dz &\le C\int_{Q}\abs{w}^p\dz
		    \end{aligned}
		    \]
		    for any $p \in [1, \infty)$.
    \end{enumerate}
\end{theorem}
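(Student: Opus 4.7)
The approach is the Whitney-based parabolic Lipschitz truncation: define $w_\lambda$ by \eqref{eq:def_ulambda}, so that $w_\lambda$ equals $w$ off $\Oal$ and on $\Oal$ is a convex combination of the weighted means $w_j$. Properties \ref{itm:oal}--\ref{itm:liptime1} essentially mirror the unweighted construction of \cite{DieSchStrVer16}; the novelty is sharpening them to the weighted setting \ref{itm:weight}--\ref{itm:liptime2}. All estimates are first carried out on the whole-space extensions from Section~\ref{ssec:extension}, which in particular produces $\tilde\omega\in\Acal_2(\setR^{n+1})$, and then transferred back to $Q$ via \eqref{eq:ext3}--\eqref{eq:ext4}.

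\textbf{Pointwise Whitney estimates.} Since $\sum_{j\in A_k}\rho_j\equiv 1$ on $Q_k\subset\Oal$, the identities
\[
w_\lambda-w=-\sum_{j\in A_k}\rho_j(w-w_j),\quad \nabla w_\lambda=\sum_{j\in A_k}\nabla\rho_j(w_j-w_k),\quad \partial_t w_\lambda=\sum_{j\in A_k}\partial_t\rho_j(w_j-w_k),
\]
combined with \ref{itm:P3}, Lemma~\ref{lem:diff_uj_uk} and Lemma~\ref{lem:Poinc}, give on $Q_k$ the pointwise bounds
\[
|\nabla w_\lambda|\le C\lambda,\qquad |\partial_tw_\lambda|\le \frac{C\lambda}{r_k},\qquad |w-w_\lambda|(z)\le Cr_k\bigl(\mathcal{M}(\nabla w)+\mathcal{M}(G)\bigr)(z).
\]
The first two at once give \ref{itm:max}, \ref{itm:lip}, and multiplying them yields $|\partial_tw_\lambda(w_\lambda-w)|\le C\lambda^2$ pointwise on $\Oal$. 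Property \ref{itm:oal} is immediate from \eqref{def:meanvalue} and the supports of $\rho_j$; property \ref{itm:stab} comes from summing the finite-overlap Whitney estimate $\int_{Q_k}|\nabla(w-w_\lambda)|^q\lesssim\int_{2Q_k}|\nabla w|^q+|G|^q$; and \ref{itm:liptime1} follows from integrating $|\partial_tw_\lambda(w_\lambda-w)|^q\le C\lambda^{2q}\chi_{\Oal}$ together with the weak-type bound $|\Oal|\le C\lambda^{-q}\int(|\nabla w|^q+|G|^q)$.

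\textbf{Integration by parts \ref{itm:integrationbyparts1}.} Insert $\varphi=w_\lambda\eta$ into \eqref{eq:ext} (legitimate since $w_\lambda\eta$ is parabolic-Lipschitz with compact support) and decompose $w\cdot\partial_tw_\lambda=\tfrac12\partial_t|w_\lambda|^2+(w-w_\lambda)\partial_tw_\lambda$; integrating by parts in time on the first summand against $\eta$ yields the claimed identity. The final integral is over $\Oal$, where $\partial_tw_\lambda$ is pointwise defined, and converges by \ref{itm:liptime1}.

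\textbf{Weighted bounds.} For \ref{itm:weight}, the pointwise estimate $|\nabla w_\lambda|\le C(\mathcal{M}(\nabla w)+\mathcal{M}(G))$ combined with the $\Acal_2$-strong maximal inequality \eqref{defAp} for $\tilde\omega$ yields the bound directly. For the first part of \ref{itm:liptime2}, pair $|\partial_tw_\lambda(w_\lambda-w)|\le C\lambda^2\chi_{\Oal}$ with the weighted weak-type bound $\lambda^2\tilde\omega(\Oal)\le C\int(|\nabla w|^2+|G|^2)\omega$, which is Chebyshev applied to \eqref{defAp}; splitting the resulting product gives
\[
\Bigl|\int_Q(\partial_tw_\lambda)(w_\lambda-w)\omega\dz\Bigr|\le C\lambda^2\tilde\omega(\Oal)\le C\sqrt{\lambda^2\tilde\omega(\Oal)}\Bigl(\int_Q(|\nabla w|^2+|G|^2)\omega\,\dz\Bigr)^{1/2}.
\]
For the $L^p$ bound, $|w_\lambda|\le|w|+\mathcal{M}w$ together with Hardy--Littlewood handles $p>1$, while for $p=1$ the bounded-overlap property \ref{itm:whit4} and $\sum_j|Q_j||w_j|\lesssim\int|w|$ suffice. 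The crux, and the main obstacle, is that the estimates $|\partial_tw_\lambda|\lesssim\lambda/r_k$ and $|w_\lambda-w|\lesssim r_k\lambda$ cancel the radii to produce a cube-independent bound $\lambda^2\chi_{\Oal}$; this cancellation is what matches the weighted level-set inequality for $\mathcal{M}$ and makes \ref{itm:liptime2} quantitatively sharp.
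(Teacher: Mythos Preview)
Your overall architecture is correct, but there is a genuine gap in the pointwise control of $w_\lambda-w$ that breaks your proofs of \ref{itm:liptime1} and the first line of \ref{itm:liptime2}. You assert (and restate in the final paragraph) that $|w_\lambda-w|\lesssim r_k\lambda$ pointwise on $Q_k$. This is false: on $\Oal$ the function $w$ itself is uncontrolled pointwise---only its \emph{averages} $w_j$ satisfy $|w_j-w_k|\le Cr_k\lambda$ via Lemma~\ref{lem:diff_uj_uk}. Your own third displayed estimate $|w-w_\lambda|(z)\le Cr_k(\mathcal{M}(\nabla w)+\mathcal{M}(G))(z)$ (from Lemma~\ref{lem:Poinc}) is the correct pointwise bound, and on $\Oal$ the maximal functions exceed $\lambda$, so you cannot simplify further. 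Consequently the bound $|\partial_t w_\lambda(w_\lambda-w)|\le C\lambda^2$ pointwise is wrong, and both the weak-type argument for \ref{itm:liptime1} and the ``$\lambda^2\tilde\omega(\Oal)$'' argument for \ref{itm:liptime2} collapse.

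The fix is exactly what the paper does: multiply your correct second and third bounds to obtain
\[
|\partial_t w_\lambda(w_\lambda-w)|(z)\le C\lambda\bigl(\mathcal{M}(\nabla w)+\mathcal{M}(G)\bigr)(z)\quad\text{on }\Oal,
\]
which is the key estimate \eqref{time:est}. Then \ref{itm:liptime1} follows by raising to the $q$-th power and using the \emph{strong} $L^q$ boundedness of $\mathcal{M}$, and the first line of \ref{itm:liptime2} follows by integrating against $\tilde\omega$, applying Cauchy--Schwarz over $\Oal$ with measure $\tilde\omega\,dz$, and invoking the $\Acal_2$ strong maximal inequality:
\[
\int_Q|\partial_t w_\lambda(w_\lambda-w)|\omega\,dz
\le C\lambda\,\tilde\omega(\Oal)^{1/2}\Bigl(\int_{\Oal}(\mathcal{M}(\nabla w)+\mathcal{M}(G))^2\tilde\omega\Bigr)^{1/2}
\le C\sqrt{\lambda^2\tilde\omega(\Oal)}\Bigl(\int_Q(|\nabla w|^2+|G|^2)\omega\Bigr)^{1/2}.
\]
Your treatment of \ref{itm:weight} and the $L^p$ stability in \ref{itm:liptime2} is fine (the paper uses a single Jensen argument valid for all $p\ge1$, whereas you split $p>1$ versus $p=1$; both work).
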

\begin{proof}
Properties \ref{itm:oal}--\ref{itm:integrationbyparts1} are exactly stated in~\cite[Theorem 1.1]{DieSchStrVer16}. The remainder needs to be proven. First, let us focus on~\ref{itm:weight}. Using the definition \eqref{eq:def_ulambda}, we have for an arbitrary $z\in Q_k$ that
\begin{align*}
\abs{\nabla (w(z) - \wal(z))}&=\abs{\nabla \sum_{j\in A_k} \rho_j(z) (w(z) - w_j  )}\le \sum_{j\in A_k} \abs{\nabla  \rho_j(z)} \abs{w(z) - w_j}+\rho_j\abs{\nabla  w(z)}\\
\intertext{and using \ref{itm:P1}, \ref{itm:P3}, \ref{itm:whit_radii} and \eqref{poinc:point}, we obtain}
\abs{\nabla (w(z) - \wal(z))} &\le C\sum_{j\in A_k} \frac{\abs{w(z) - w_j}}{r_j}\chi_{\frac34Q_j}+\abs{\nabla  w(z)}\le C(\mathcal{M}(\nabla w)(z)+\mathcal{M}(G)(z)).
\end{align*}
Thus using the continuity of the maximal function, see \eqref{defAp}, the fact that $\omega \in \Acal_{2}$ and the extensions \eqref{extension} and \eqref{extensionw}, we have
$$
\begin{aligned}
\int_Q \abs{\nabla \wal}^2 \omega \dz &\le C\int_{\Oal} \abs{\nabla(w-\wal)}^2 \tilde{\omega} \dz + C\int_Q \abs{\nabla w}^2\omega \dz\\
 &\le C\int_{\mathbb{R}^{n+1}}\left((\mathcal{M}(\nabla w))^2 +(\mathcal{M}(G))^2 \right)\tilde{\omega} \dz \le C\int_{\mathbb{R}^{n+1}}\left(\abs{\nabla w}^2 +\abs{G}^2 \right)\tilde{\omega} \dz\\
 &\le C\int_{Q}\left(\abs{\nabla w}^2 +\abs{G}^2 \right)\omega \dz,
\end{aligned}
$$
which is \ref{itm:weight}.

Secondly, we focus on the terms with time derivatives, i.e., on the proof of~\ref{itm:liptime1} and   \ref{itm:liptime2}. Since $\frac34Q_i\subset \Oal$, we have by the definition of the Lipschitz truncation and by $\sum_{j\in A_i}\rho_j\equiv 1$, see \ref{itm:P4}, that for any $z\in \frac34Q_i$
\begin{align*}
(\partial_t\wal(z))
      (\wal(z)-w(z))&=\partial_t \Big(\sum_{j\in A_i} \rho_j(z) w_j\Big)\sum_{m\in A_i}(w_m-w(z))\rho_m(z)
      \\
      &= \sum_{j\in A_i} \partial_t\rho_j(z) (w_j-w_i)\sum_{m\in A_i}(w_m-w(z))\rho_m (z).
\end{align*}
Hence, using \ref{itm:P3}, \ref{itm:whit_radii}, Lemma~\ref{lem:diff_uj_uk} and Lemma~\ref{lem:Poinc}, we obtain for any $z\in\frac34 Q_i$
\begin{align}
\begin{aligned}
|\partial_t\wal(z)
      (\wal(z)-w(z))|&\le  C\sum_{j\in A_i}\sum_{m\in A_i} \frac{|w_j-w_i|}{r_j}\frac{|w_m-w(z)|}{r_j}\chi_{\frac34 Q_m}\\
      &\le  C\lambda \left(\mathcal{M}(\nabla w)(z) +\mathcal{M}(G)(z)\right).
\end{aligned}\label{time:est}
\end{align}
Consequently, we obtain with the help of \ref{itm:whit1} and the continuity of the maximal function that
\begin{align*}
\int_Q \abs{\partial_t \wal
      (\wal-w)}^q\dz&\le \int_{\Oal}\abs{\partial_t \wal
      (\wal-w)}^q\dz\le \sum_i\int_{\frac34 Q_i}\abs{\partial_t \wal
      (\wal-w)}^q\dz \\
      &\le C\lambda^q \sum_i\int_{\frac34 Q_i}\left(\mathcal{M}(\nabla w) +\mathcal{M}(G)\right)^q \dz\\
      &\le C\lambda^q \int_{\mathbb{R}^{n+1}}\left(\mathcal{M}(\nabla w) +\mathcal{M}(G)\right)^q \dz\le C\lambda^q \int_{\mathbb{R}^{n+1}}\abs{\nabla w}^q      +\abs{G}^q \dz\\
      &\le C\lambda^q \int_{Q}\abs{\nabla w}^q      +\abs{G}^q \dz.
\end{align*}
This proves \ref{itm:liptime1}. It remains to show  \ref{itm:liptime2}. Having already the estimate \eqref{time:est}, we can use the continuity of the maximal function in weighted spaces, the fact that $\omega$ is an $\Acal_{2}$ weight, the H\"{o}lder inequality and the properties of the Whitney covering to deduce
\begin{align*}
\int_Q \abs{\partial_t \wal
      (\wal-w)}\omega\dz&\le \int_{\Oal}\abs{\partial_t \wal
      (\wal-w)} \tilde \omega\dz\le \sum_i\int_{\frac34 Q_i}\abs{\partial_t \wal
      (\wal-w)}\tilde{\omega}\dz \\
      &\le C\lambda \sum_i\int_{\frac34 Q_i}\left(\mathcal{M}(\nabla w) +\mathcal{M}(G)\right)\tilde{\omega} \dz\\
      &\le C\lambda \int_{\Oal}\left(\mathcal{M}(\nabla w) +\mathcal{M}(G)\right)\tilde{\omega} \dz\\
      &\le C\sqrt{\lambda^2\tilde{\omega}(\Oal)}\left(\int_{\Oal}\left(\mathcal{M}(\nabla w) +\mathcal{M}(G)\right)^2\tilde{\omega} \dz \right)^{\frac12} \\
      &\le C\sqrt{\lambda^2\tilde{\omega}(\Oal)} \left(\int_{Q}(\abs{\nabla w}^2      +\abs{G}^2)\omega \dz\right)^{\frac12}.
\end{align*}
%
For the latter statement of  \ref{itm:liptime2} it clearly suffices to show
\[
\int_{\Oal} \abs{w-w_{\lambda}}^p = \int_Q \abs{w-w_{\lambda}}^p \le C \int_Q \abs{w}^p.
\]
Hence we write, using the definition \eqref{eq:def_ulambda} of $w_\lambda$ and properties of partition of unity: $\rho_j \in [0,1]$, the  finite intersection property \ref{itm:whit_sum}
\[
\begin{aligned}
 \int_{\Oal} \abs{w-w_{\lambda}}^p &\le  \sum_{i\in \mathbb{N}} \int_ {\frac12 Q_i} \bigg| \sum_{j \in A_i} \rho_j(z) (w(z) - w_j  ) \bigg|^p dz \le  C \sum_{i\in \mathbb{N}} \int_ {\frac12 Q_i} \sum_{j \in A_i} \rho_j(z) |w(z) - w_j|^p dz \\
 &\le C \int_Q \abs{w}^p\dz + C \sum_{i\in \mathbb{N}} \int_ {\frac12 Q_i} \sum_{j \in A_i} \rho_j(z) \frac{\int \abs{ w }^p \rho_j}{\int \rho_j} dz \\
 &\le C \int_Q \abs{w}^p\dz + C \sum_{i\in \mathbb{N}} \int_ {\frac12 Q_i} \sum_{j \in A_i} \rho_j(z) \frac{\int_{Q_i} \abs{ w }^p \rho_j}{\int \rho_j} dz
 \\
 &\le C \int_Q \abs{w}^p\dz + C \sum_{i\in \mathbb{N}} \left(\int_{Q_i} \abs{ w }^p \right)  \sum_{j \in A_i} \int_ {\frac12 Q_i} \rho_j(z) \frac{1}{\int \rho_j} dz \le  C \int_{2Q} \abs{w}^p\dz,
 \end{aligned}
\]
where in the second line we invoked the definition \eqref{def:meanvalue} of the weighed mean value $w_j$ and the related Jensen inequality and in the third line \ref{itm:whit3}. The right hand side and inequality for extension imply the second part of  \ref{itm:liptime2}. The proof of Theorem \ref{thm:lip} is complete.
\end{proof}

\subsection{Proof of Theorem~\ref{thm:lipseq}}
Recall that up to now we have studied the case of a single couple of functions $(w,G)$ satisfying \eqref{eq:w}. In order to make the step to a sequence $(w^k,G^k)$, let us first consider an arbitrary $\Lambda>0$ and $m_0\in \mathbb{N}$. Due to the continuity of the maximal function and the fact that $\omega$ is an $\Acal_{2}$ weight, we deduce from \eqref{apkk} that (here we extended all quantities outside $(0,T)\times \Omega$ by \eqref{extension} and \eqref{extensionw})
\begin{equation}\label{final_start}
\int_{\mathbb{R}^{n+1}} \abs{\mathcal{M}(G^k)}^q + \abs{\mathcal{M}(\nabla w^k)}^q + \abs{\mathcal{M}(G^k)}^2\tilde{\omega} + \abs{\mathcal{M}(\nabla w^k)}^2\tilde{\omega} \dz \le C.
\end{equation}
Thus, using this estimate, we have that for arbitrary $k$
$$
\begin{aligned}
\min_{m\in \{0,\ldots, m_0\}}& \left(\int_{\{\Lambda^{2^m}< \mathcal{M}(G^k)\le \Lambda^{2^{m+1}} \}} \abs{\mathcal{M}(G^k)}^q  + \abs{\mathcal{M}(G^k)}^2\tilde{\omega} \dz\right.\\
 &\quad \left. +\int_{\{\Lambda^{2^m}<\mathcal{M}(\nabla w^k)\le \Lambda^{2^{m+1}}\}} \abs{\mathcal{M}(\nabla w^k)}^q  + \abs{\mathcal{M}(\nabla w^k)}^2\tilde{\omega} \dz \right)\\
 & \le \frac{1}{m_0}  \sum_{m=0}^{m_0}\left(\int_{\{\Lambda^{2^m}< \mathcal{M}(G^k)\le \Lambda^{2^{m+1}} \}} \abs{\mathcal{M}(G^k)}^q  + \abs{\mathcal{M}(G^k)}^2\tilde{\omega} \dz\right.\\
 &\quad \left. +\int_{\{\Lambda^{2^m}<\mathcal{M}(\nabla w^k)\le \Lambda^{2^{m+1}}\}} \abs{\mathcal{M}(\nabla w^k)}^q  + \abs{\mathcal{M}(\nabla w^k)}^2\tilde{\omega} \dz \right)\\
 & \le \frac{1}{m_0}  \int_{Q} \abs{\mathcal{M}(G^k)}^q  + \abs{\mathcal{M}(G^k)}^2\tilde{\omega} + \abs{\mathcal{M}(\nabla w^k)}^q  + \abs{\mathcal{M}(\nabla w^k)}^2\tilde{\omega} \dz \le \frac{C}{m_0}
\end{aligned}
$$
Thus for every $k$ we can find $m_k\in \{0,\ldots, m_0\}$ such that
\begin{align}\label{findm0}
\begin{aligned}
&\left(\int_{\{\Lambda^{2^{m_k}}< \mathcal{M}(G^k)\le \Lambda^{2^{m_k+1}} \}} \abs{\mathcal{M}(G^k)}^q  + \abs{\mathcal{M}(G^k)}^2\tilde{\omega} \dz\right.\\
 &\quad \left. +\int_{\{\Lambda^{2^{m_k}}<\mathcal{M}(\nabla w^k)\le \Lambda^{2^{m_k+1}}\}} \abs{\mathcal{M}(\nabla w^k)}^q  + \abs{\mathcal{M}(\nabla w^k)}^2\tilde{\omega} \dz \right)\le \frac{C}{m_0}.
\end{aligned}
\end{align}
Hence defining
$$
\lambda_k:=\Lambda^{2^{m_k}},
$$
we can use \eqref{final_start} and \eqref{findm0} to observe
$$
\begin{aligned}
\lambda_k^q|\mathcal{O}_{\lambda_k}|+\lambda_k^2 \tilde{\omega}(\mathcal{O}_{\lambda_k})&\le \int_{\{\mathcal{M}(G^k)>\lambda_k\}} \lambda_k^q + \lambda_k^2 \tilde{\omega}\dz +\int_{\{\mathcal{M}(\nabla w^k)>\lambda_k\}} \lambda_k^q + \lambda_k^2 \tilde{\omega}\dz\\
&\le \int_{\{\lambda_k^2\ge \mathcal{M}(G^k)>\lambda_k\}} \lambda_k^q + \lambda_k^2 \tilde{\omega}\dz +\int_{\{\lambda_k^2\ge \mathcal{M}(\nabla w^k)>\lambda_k\}} \lambda_k^q + \lambda_k^2 \tilde{\omega}\dz\\
&\quad + \int_{\{\mathcal{M}(G^k)>\lambda^2_k\}} \lambda_k^q + \lambda_k^2 \tilde{\omega}\dz +\int_{\{\mathcal{M}(\nabla w^k)>\lambda^2_k\}} \lambda_k^q + \lambda_k^2 \tilde{\omega}\dz\\
&\le \int_{\{\lambda_k^2\ge \mathcal{M}(G^k)>\lambda_k\}} \abs{\mathcal{M}(G^k)}^q + \abs{\mathcal{M}(G^k)}^2 \tilde{\omega}\dz \\
&\quad +\int_{\{\lambda_k^2\ge \mathcal{M}(\nabla w^k)>\lambda_k\}} \abs{\mathcal{M}(\nabla w^k)}^q + \abs{\mathcal{M}(\nabla w^k)}^2 \tilde{\omega}\dz\\
&\quad + \int_{\mathbb{R}^{n+1}} \lambda_k^{-q}\abs{\mathcal{M}(G^k)}^q + \lambda_k^{-2}\abs{\mathcal{M}(G^k)}^2 \tilde{\omega}\dz \\
&\quad +\int_{\mathbb{R}^{n+1}} \lambda_k^{-q}\abs{\mathcal{M}(\nabla w^k)}^q + \lambda_k^{-2}\abs{\mathcal{M}(\nabla w^k)}^2 \tilde{\omega}\dz\\
&\le \frac{C}{m_0} + \frac{C}{\lambda_k}.
\end{aligned}
$$
Thus, setting $m_0:=\Lambda$, we obtain
\begin{align}\label{Burcz}
\lambda_k^q|\mathcal{O}_{\lambda_k}|+\lambda_k^2 \tilde{\omega}(\mathcal{O}_{\lambda_k})&\le \frac{C}{\Lambda}.
\end{align}

Now, for a fixed $k$ we use $\lambda:=\lambda_k$, $G:=G^k$ and $w:=w^k$ in Theorem~\ref{thm:lip} and define $w^k_{\Lambda}:=w^k_{\lambda_k}$. Since $\Lambda\le \lambda_k \le \Lambda^{4^{\Lambda}}$ and we have the uniform bound \eqref{apkk}, we see that \ref{LS1} follows from \ref{itm:lip}, \ref{itm:liptime1} and \ref{itm:max}. Similarly, \ref{LS2} follows from \ref{itm:stab},  \ref{itm:weight} and \ref{itm:liptime2} combined with \eqref{Burcz}. Then \ref{LS3} is nothing else than \ref{itm:integrationbyparts1}. Finally, since $\mathcal{O}^k_{\Lambda} \subset \mathcal{O}_{\lambda_k}$, the claim \ref{LS4} follows from \eqref{Burcz}. The proof is complete.


\section{Linear theory}
This section is devoted to the linear theory, i.e., to the theory for \eqref{eq:sysA} with $A$ being linear with respect to $\nabla u$. More precisely, cf. \eqref{tildeA}, we assume that a given $\tilde{A}:L^{\infty}(Q_T; \RNn \times \RNn)$ is \emph{strongly elliptic}, i.e., there exist positive constants $c_1$ and $c_2$ such that for $\eta \in  \mathbb{R}^{n\times N}$
\begin{equation}\label{str.elip}
c_1|\eta|^2\le {\tilde A (z)} \eta \cdot \eta \le c_2|\eta|^2.
\end{equation}
and we analyse
\begin{align}
\label{eq:sysL}
\begin{aligned}
  \partial_t u-\divergence( \tilde A \nabla u) &= -\divergence f &&\textrm{ in }{Q_T},\\
  u&=\tilde{g}&&\textrm{ on } \partial {Q_T},
  \end{aligned}
\end{align}
with datum $\tilde{g}$ represented by certain $g$ and $F$ along Assumption~\ref{ass:C}. The main result of this section reads
\begin{theorem}
\label{thm:lin}
Let $\partial \Omega \in \mathcal{C}^1$, $q\in (1,\infty)$, $\omega \in \Acal_q$ and $\tilde A \in
  \mathcal{C}(\overline{{Q_T}}, \setR^{n\times N \times n\times N})$  strongly elliptic be given. Consider the data $f\in L^q_\omega ({Q_T};\setR^{n\times N})$ and $\tilde g$ satisfying the representation Assumption ~\ref{ass:C}, which $\nabla g \in L^q_\omega ({Q_T};\setR^{n\times N})$ and $F\in L^q_\omega ({Q_T};\setR^{n\times N})$. Then there exists $u\in L^1(0,T; W^{1,1}(\Omega;\RN))$ fulfilling for all $\varphi \in \mathcal{C}^1_0((-\infty,T)\times \Omega;\RN)$
\begin{equation}\label{wf:lin}
\int_{Q_T} -(u-g)\cdot \partial_t \varphi + \tilde{A} \nabla u \cdot \nabla \varphi \dz = \int_{Q_T} (f+F)\cdot \nabla \varphi \dz.
\end{equation}
Moreover, we have that $(u-g)\in L^1(0,T; W^{1,1}_0(\Omega;\RN))$ and the following estimate holds
\begin{equation}\label{keyest}
\norm{\nabla u}_{L^q_{\omega}(Q_T)} \leq C(c_1, c_2,q, \tilde{A},\Omega, \Acal_q(\omega))\left(\norm{f}_{L_\omega^{q}({Q_T})} + \norm{\nabla g}_{L_\omega^{q}({Q_T})}+\norm{F}_{L_\omega^{q}({Q_T})}\right).
\end{equation}
In addition, let $s>1$, then $u$ is unique in the $L^s(0,T; W^{1,s}(\Omega))$ class (in the sense of subsection \ref{ssec:uq}). 

\end{theorem}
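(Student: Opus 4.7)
The plan is to split the proof into three tasks: a weighted a-priori gradient bound, existence via approximation, and uniqueness via duality. First I would reduce to the case of homogeneous initial/boundary data by setting $v:=u-g$. Using $\partial_t g = \divergence F$ from Assumption~\ref{ass:C}, the equation for $v$ becomes
\[
\partial_t v - \divergence(\tilde A \nabla v) = -\divergence(f + F - \tilde A \nabla g) \quad \text{in } Q_T,
\]
with $v \equiv 0$ on $\partial Q_T$ distributionally. Writing $\tilde f := f + F - \tilde A \nabla g \in L^q_\omega(Q_T;\setR^{n\times N})$, it suffices to establish all claims for the reduced homogeneous-data problem with source $\divergence \tilde f$.

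The core step is the weighted bound $\|\nabla v\|_{L^q_\omega} \le C\|\tilde f\|_{L^q_\omega}$, and this is the main obstacle. My strategy combines coefficient freezing with weighted parabolic Calder\'on--Zygmund theory. Uniform continuity of $\tilde A$ on $\overline{Q_T}$ permits, on any sufficiently small parabolic cylinder $Q$ centred at $z_Q$, the decomposition $\tilde A = \tilde A(z_Q) + (\tilde A - \tilde A(z_Q))$ with $L^\infty$-small remainder. For the constant-coefficient operator $\partial_t - \divergence(\tilde A(z_Q)\nabla\cdot)$ a linear change of space variables would reduce matters to the heat operator on $\setR^{n+1}$, so $\nabla v$ is produced from $\tilde f$ by a parabolic Calder\'on--Zygmund operator, which is bounded on $L^q_\omega$ for every $\omega \in \Acal_q$ by the standard weighted theory (or one may prove the bound at the $L^2_\omega$-level for $\omega \in \Acal_2$ and extrapolate \`a la Rubio de Francia). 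A partition of unity subordinate to a covering on which $\tilde A$ varies little, a flattening of $\partial \Omega$ exploiting $\Omega \in \mathcal{C}^1$, and absorption of the perturbation into the left-hand side then yield \eqref{keyest}; the delicate point is to keep all constants depending only on $\Acal_q(\omega)$ and the modulus of continuity of $\tilde A$.

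With the estimate in hand, existence would proceed by approximation. I would approximate $\tilde f$ in $L^q_\omega$ by $\tilde f_k \in C^\infty_0(Q_T;\setR^{n\times N})$ (density holds since $\omega \in \Acal_q\subset \Acal_\infty$), and for each $k$ the classical Lions--Galerkin scheme delivers a unique weak solution $v_k \in L^2(0,T;W^{1,2}_0(\Omega;\setR^N))$. The weighted estimate yields uniform control of $\nabla v_k$ in $L^q_\omega$, and via \eqref{eq:lqprop} also in $L^\sigma(Q_T)$ for some $\sigma>1$. Extracting a weak limit $v$ and passing to the limit in the linear weak formulation produces a solution satisfying \eqref{wf:lin}, while weak lower semicontinuity in $L^q_\omega$ upgrades the bound to \eqref{keyest}.

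Uniqueness would be achieved by duality. The difference $w$ of two admissible solutions with identical data lies in $L^s(0,T;W^{1,s}_0(\Omega;\setR^N))$ and satisfies
\[
\int_{Q_T}\bigl(-w\cdot \partial_t \varphi + \tilde A \nabla w \cdot \nabla \varphi\bigr)\dz = 0
\]
for all $\varphi \in C^1_0((-\infty,T)\times \Omega;\setR^N)$. Given $\psi \in C^\infty_0(Q_T;\setR^N)$, I would write $\psi = \divergence \tilde F$ with compactly supported $\tilde F \in L^\infty(Q_T)$, obtained as a one-dimensional primitive of $\psi$, and invoke the existence part above for the time-reversed adjoint problem $-\partial_t \phi - \divergence(\tilde A^\top \nabla \phi) = \divergence \tilde F$ with $\phi(T)=0$, $\phi|_{\partial\Omega}=0$, at exponent $s'$ and trivial weight $\omega\equiv 1\in \Acal_{s'}$, obtaining $\phi$ with $\nabla \phi \in L^{s'}(Q_T)$. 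A time mollification of $\phi$ would legitimise taking $\varphi=\phi$ in the identity for $w$, producing $\int_{Q_T} w \cdot \psi\dz = 0$ for every such $\psi$, and hence $w\equiv 0$.
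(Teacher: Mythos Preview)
Your overall architecture matches the paper's: reduce to homogeneous data via $v=u-g$ with source $\tilde f=f+F-\tilde A\nabla g$, establish the weighted gradient bound, obtain existence by approximation, and treat uniqueness separately.

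The weighted estimate is where you diverge. The paper does not invoke whole-space weighted Calder\'on--Zygmund theory for constant-coefficient operators; instead it runs a good-$\lambda$ argument (Lemmata~\ref{thm:ell-local} and~\ref{lem:halfball}). On each cube of a Calder\'on--Zygmund decomposition of the superlevel set $\{M_q(\chi_{Q_R}\nabla u)>\lambda\}$ one solves a local zero-boundary comparison problem with frozen coefficients, controls it via the \emph{unweighted} $L^q$ estimate of Lemma~\ref{lem:CZ}, and combines this with a local sup bound for the homogeneous constant-coefficient equation to obtain the re-distributional inequality $\omega(U^\lambda_{\epsilon,k}\cap Q_R)\le C(\epsilon+\delta)^{1/r'}\omega(O_\lambda\cap Q_R)$. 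Boundary cylinders are handled by flattening and odd reflection, reducing to the interior lemma. The whole scheme is self-contained modulo Lemma~\ref{lem:CZ}, and the weight enters only through $\Acal_p$ doubling and the reverse H\"older inequality, which is why the constants depend just on $A_p(\omega)$.

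Your route via kernel bounds plus perturbation is viable in principle, but there is a genuine error: for systems ($N>1$) a linear change of space variables does \emph{not} reduce a constant-coefficient strongly elliptic operator with tensor $\tilde A(z_Q)\in\setR^{n\times N\times n\times N}$ to the heat operator; the components of $u$ remain coupled and no such diagonalisation exists generically. What you actually need is that the fundamental solution of a constant-coefficient strongly parabolic system satisfies standard Calder\'on--Zygmund kernel estimates, from which weighted $L^q_\omega$ boundedness follows directly. Your localization sketch also glosses over the lower-order terms (involving $v$ rather than $\nabla v$) produced by cutoffs; these need weighted Poincar\'e-type control, which the paper's good-$\lambda$ framework sidesteps entirely.

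For uniqueness the paper is shorter: once $w=u_1-u_2\in L^s(0,T;W^{1,s}_0)$ satisfies the zero-data equation, Lemma~\ref{lem:CZ} applied at exponent $s$ already gives $w=0$. Your adjoint duality argument is correct but reproves what that lemma supplies.
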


Let us first recall the respective result for homogeneous data and within $L^q$ spaces (without Muckenhoupt weights). It will play an essential role in proving Theorem~\ref{thm:lin}.
\begin{lemma}
  \label{lem:CZ}
  Let $\partial \Omega \in \mathcal{C}^1$  and $\tilde{A} \in
  \mathcal{C}(\overline{{Q_T}}, \setR^{n\times N \times n\times N})$ be strongly elliptic. Then, for any $f\in L^q({Q_T})$, $q \in (1, \infty)$, there exists a weak solution to \eqref{eq:sysL} with $\tilde{g}=0$. In addition, it satisfies
  \begin{equation}\label{aoprioriconst}
       \norm{\nabla u}_{L^{q}({Q_T})} \leq C(\tilde{A},q,\Omega)\norm{f}_{L^{q}({Q_T})}.
  \end{equation}
  Moreover, it is unique in the class \[
  u\in
  L^q (0, T; W^{1,q}_0(\Omega)).
  \]
\end{lemma}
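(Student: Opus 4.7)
My plan is to reduce the full range $q\in (1,\infty)$ to the case $q=2$, which comes from standard Hilbert-space theory, and then bootstrap above $2$ by a parabolic Calder\'on--Zygmund argument and descend below $2$ by duality on the backward adjoint problem. Uniqueness will follow from the same duality.

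\textbf{Base case $q=2$.} Because $\tilde A$ is bounded, measurable and pointwise uniformly positive on $Q_T$, the bilinear form $(u,\varphi)\mapsto \int_{Q_T}\tilde A\nabla u\cdot\nabla\varphi$ is bounded and coercive on $L^2(0,T;W^{1,2}_0(\Omega))$. A Galerkin/Lions--Magenes construction then gives a unique $u\in L^2(0,T;W^{1,2}_0)\cap C([0,T];L^2)$ with $u(0)=0$, fulfilling \eqref{wf:lin} with $g=F=0$, and satisfying $\|\nabla u\|_{L^2(Q_T)}\le C\|f\|_{L^2(Q_T)}$. This is exactly \eqref{aoprioriconst} for $q=2$.

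\textbf{Range $q>2$.} Here I would run the standard parabolic Calder\'on--Zygmund scheme. Since $\tilde A\in C(\overline{Q_T})$ is uniformly continuous, on any sufficiently small parabolic cylinder $\tilde A$ is close in sup-norm to its average, so the comparison problem obtained by freezing $\tilde A$ has full interior Lipschitz regularity and, after flattening the $\mathcal{C}^1$-boundary, up-to-the-boundary Lipschitz regularity. Combining this comparison estimate with a Caccioppoli inequality and a Vitali/stopping-time decomposition of the superlevel sets of $\mathcal M(|\nabla u|^2)$ yields a level-set inequality for $\mathcal M(|\nabla u|^2)$ in terms of $\mathcal M(|f|^2)$, which upon integration delivers \eqref{aoprioriconst} for every $q>2$. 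Existence in this range is then obtained by approximating $f$ in $L^2\cap L^q$ and passing to the limit using the newly obtained a priori bound. The details are classical and we may invoke them from \cite[Ch.~IV]{LadSolUra68} and \cite{Byunatall15}.

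\textbf{Range $1<q<2$ and uniqueness.} For this range the approach is by duality. For any $\varphi\in C^\infty_c(Q_T;\setR^{n\times N})$ let $v$ solve the backward adjoint problem
\begin{equation*}
-\partial_t v-\divergence(\tilde A^T\nabla v)=-\divergence\varphi\ \text{in}\ Q_T,\quad v=0\ \text{on}\ \partial\Omega\times(0,T),\quad v(T)=0.
\end{equation*}
Since $q'>2$, the previous step provides a unique such $v$ with $\|\nabla v\|_{L^{q'}(Q_T)}\le C\|\varphi\|_{L^{q'}(Q_T)}$. Approximating $f$ by $f_k\in L^2\cap L^q$, the $L^2$-solutions $u_k$ from the base case satisfy, after pairing the equation for $u_k$ with $v$ and that for $v$ with $u_k$, subtracting and using $u_k(0)=0$, $v(T)=0$,
\begin{equation*}
\int_{Q_T}\nabla u_k\cdot\varphi\,\dz=\int_{Q_T}f_k\cdot\nabla v\,\dz,
\end{equation*}
so that $\|\nabla u_k\|_{L^q(Q_T)}\le C\|f_k\|_{L^q(Q_T)}$ by taking the supremum over $\varphi$. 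The same identity applied to $u_k-u_m$ shows $\{\nabla u_k\}$ is Cauchy in $L^q$, delivering a limit $u\in L^q(0,T;W^{1,q}_0(\Omega))$ solving \eqref{wf:lin} and satisfying \eqref{aoprioriconst}. Uniqueness in the class $L^q(0,T;W^{1,q}_0)$ follows by the same duality: if $u$ solves the homogeneous problem, then the identity above (now justified directly on $u$, since $u\in L^q(W^{1,q}_0)$ and $v\in L^{q'}(W^{1,q'}_0)$) forces $\int_{Q_T}\nabla u\cdot\varphi=0$ for all test $\varphi$, whence $\nabla u\equiv 0$ and $u\equiv 0$ via the zero boundary data.

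\textbf{Main obstacle.} The most demanding ingredient is the parabolic Calder\'on--Zygmund estimate of paragraph three in the regime $q>2$ up to a $\mathcal C^1$-boundary: it requires the quantitative frozen-coefficient comparison together with a parabolic Vitali-type covering, with the $\mathcal{C}^1$-regularity of $\partial\Omega$ entering through boundary flattening. Everything else is soft: the $L^2$ base case is elementary Hilbert-space theory, and the passage to $q<2$ and the uniqueness become automatic once the $q'>2$ bound on the adjoint is available.
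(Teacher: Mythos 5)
Your proposal is correct and follows essentially the same route as the paper: the paper also obtains the core Calder\'on--Zygmund estimate by citing the literature (Theorem~1.5 of Byun~\cite{Byun05jde} for single equations, and~\cite{Byunatall15} with $\omega\equiv 1$ for systems) and then handles the range $q<2$ and uniqueness by duality against the backward adjoint problem. The only superficial difference is that you spend a paragraph sketching the frozen-coefficient/stopping-time scheme whereas the paper simply points to the references and notes that uniqueness for $q>2$ is automatic while $q<2$ follows by duality.
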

\begin{proof}
In the case of a single equation, the above lemma, even under considerably more general assumptions on domain (locally flat Lipschitz condition) and regularity of $A$ (small BMO), can be found as Theorem 1.5   in  \cite{Byun05jde}, the case for systems follows from~\cite{Byunatall15} using the weight $\omega\equiv 1$. Observe, that the assumption on the weights in ~\cite{Byunatall15} namely $\omega\in \Acal_{q/2}$, is essentially stronger then what is assumed above in Theorem~\ref{thm:lin}.
The uniqueness for $q>2$ is automatic and the case $q<2$ follows from duality.
\end{proof}

The rest of this section is devoted to the proof of Theorem~\ref{thm:lin}. In the first part, we focus on the proof for the homogeneous case, i.e. the case when $\tilde{g}=0$. Next, we apply the result for homogeneous case to the inhomogeneous setting thus obtaining Theorem~\ref{thm:lin} in its full generality.

\subsection{Homogeneous data}
First, we observe that~\eqref{eq:lqprop} and boundedness of $ {Q_T}$ implies that $\exists \;  {q_0} \in(1, p)$ such that $f \in L^p_\omega( {Q_T}) \embedding f \in
L^{\tilde{q}}({Q_T})$ for any $\tilde{q} \le q_0$. Consequently, Lemma~\ref{lem:CZ} provides a weak solution $u$ to~\eqref{eq:sysL} such that
 \begin{equation}\label{wsqqq}
        \norm{\nabla u}_{L^{\tilde{q}}({Q_T})} \leq C(\tilde{A},q,\Omega)\norm{f}_{L^{\tilde{q}}({Q_T})},
 \end{equation}
 unique in this class. For further purposes let us also denote $Q_{\tilde{T}}:=(-1,T+1)\times \Omega$, extend $f$ by zero outside $Q_T$ and define $\tilde{A}(t,x):=\tilde{A}(T,x)$ for $t>T$ and $\tilde{A}(t,x):=\tilde{A}(0,x)$ for $t\le 0$. Then, it follows from Lemma~\ref{lem:CZ} that $u$ can be extended up to time $T+1$ and fulfils
\begin{equation}\label{wsq}
       \norm{\nabla u}_{L^{\tilde{q}}({Q_{\tilde{T}}})} \leq C(\tilde{A},q,\Omega)\norm{f}_{L^{\tilde{q}}({Q_T})},
\end{equation}
since we can extend $u$ by zero for $t\in (-1,0]$ so that it is a solution on the whole~$Q_{\tilde{T}}$.

Now, it suffices to prove the optimal estimate~\eqref{keyest}.  We divide
the rest of the proof into three steps. In the first one, we shall prove the
natural local interior estimates in $Q_{\tilde{T}}$, see Lemma~\ref{thm:ell-local}. Next, we obtain the local boundary estimates (Lemma~\ref{lem:halfball}) and finally, we combine them together to get~\eqref{keyest}. Lemmata~\ref{thm:ell-local} and~\ref{lem:halfball} may be of a independent interest.

\subsubsection{Interior estimates}
Recall that we write $z = (x, t)$. It is slightly more convenient in this section to use the following parabolic cylinders: $Q_r (z) = B_r (x) \times (t-r^2, t+r^2)$.
For a fixed $z_0 \in {Q_{\tilde{T}}} \setminus \partial {Q_{\tilde{T}}}$, we will call any parabolic cylinder  $Q_r (z_0)$ with $r \le 1$ an \emph{interior cylinder} as long as $Q_r (z_0) \subset \subset {Q_{\tilde{T}}}$. For any cylinder $Q=Q_r (z)$, we denote the coaxial cylinder $Q_{\alpha r} (z)$ by $\alpha Q$. The key interior estimate is formulated in the following lemma.
\begin{lemma}
\label{thm:ell-local}
Let  $p\in (1,\infty)$ and  $\omega \in \Acal_p$ be arbitrary. Assume that $Q_{2R}=B_{2R} \times I_{2R}$ is an interior cylinder with $R\le 1$, $f\in L^p_\omega(Q_{2R}; \setR^{n\times N})$, the strongly elliptic tensor\footnote{We recall here \eqref{str.elip} for the notion of strong ellipticity.} $\tilde{A}\in L^{\infty}(Q_{2R};\setR^{n\times N \times n \times N})$ and  $u\in  L^{\tilde{q}}(I_{2R}, W^{1,  \tilde{q}} (B_{2R};\mathbb{R}^N))$ with some $\tilde{q}>1$  satisfy for all $ \phi\in \mathcal{C}^{1}_0(Q_{2R})$ the following
\begin{equation}\label{eq:lind}
 \int_{{Q_{2R}}}   \big[- u (z) \,   \partial_t \phi (z) + \tilde{A}(z) \nabla u (z) \cdot \nabla \phi (z) - f (z) \cdot \nabla \phi (z) \big] \dz = 0.
\end{equation}
There exists $\delta>0$ depending only on $p$, ellipticity constants $c_1$, $c_2$ and $\Acal_p  (\omega)$ such that if
\begin{equation}\label{ass:AAA}
|\tilde{A}(z_1)-\tilde{A}(z_2)| \le \delta \quad \textrm{ for all } z_1,z_2\in {Q_{2R}}
\end{equation}
then the following interior local estimate holds
\begin{equation}\label{unlocal}
  \bigg(\dashint_{Q_R}\abs{\nabla u}^p\omega \dz\bigg)^\frac1p\leq
  C\bigg(  \dashint_{Q_{2R}}\abs{f}^p\omega \dz\bigg)^\frac1p
  + C \bigg(\dashint_{Q_{2R}}\omega \dz\bigg)^{\frac{1}{p}}\bigg(\dashint_{Q_{2R}}\abs{\nabla u}^{\tilde{q}} \dz\bigg)^{\frac{1}{\tilde{q}}},
\end{equation}
where 
 the constant $C$ depends only  on $p$, $c_1$, $c_2$ and $\Acal_p (\omega)$.
\end{lemma}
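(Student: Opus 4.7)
The plan is to implement a weighted Calderón-Zygmund perturbation scheme (in the spirit of Caffarelli-Peral, Acerbi-Mingione and Byun-Wang) translated to the Muckenhoupt setting by exploiting the openness property of Lemma~\ref{cor:leftopen}. Three steps suffice.

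\textbf{Step 1 (comparison with frozen coefficients).} For a subcylinder $Q_\rho(z_0)\subset Q_{2R}$, freeze the coefficient to $\bar A:=\tilde A(z_0)$ and decompose $u=v+w$, where $v$ solves
\[
\partial_t v-\divergence(\bar A\nabla v)=0\quad\text{in }Q_\rho(z_0),\qquad v=u\text{ on }\pbnd Q_\rho(z_0).
\]
The correction $w$ then solves $\partial_t w-\divergence(\bar A\nabla w)=\divergence\bigl((\bar A-\tilde A)\nabla u+f\bigr)$ with zero parabolic boundary data, so at a suitably chosen internal exponent $q_1\in(1,p)$, Lemma~\ref{lem:CZ} combined with the smallness~\eqref{ass:AAA} gives
\[
\Bigl(\dashint_{Q_\rho(z_0)}|\nabla w|^{q_1}\dz\Bigr)^{1/q_1}\le C\delta\Bigl(\dashint_{Q_\rho(z_0)}|\nabla u|^{q_1}\dz\Bigr)^{1/q_1}+C\Bigl(\dashint_{Q_\rho(z_0)}|f|^{q_1}\dz\Bigr)^{1/q_1}.
\]
Meanwhile interior regularity for the constant-coefficient problem yields the Lipschitz bound $\|\nabla v\|_{L^\infty(\frac12 Q_\rho(z_0))}^{q_1}\le C\dashint_{Q_\rho(z_0)}|\nabla v|^{q_1}\dz$.

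\textbf{Step 2 (good-$\lambda$ inequality).} A Vitali covering / stopping-time argument built on Step~1 translates the pointwise comparison into a level-set inequality for the parabolic maximal function:
\begin{align*}
|\{\mathcal M(|\nabla u|^{q_1}\chi_{Q_{2R}})>K\lambda\}\cap Q_R| &\le C\epsilon\,|\{\mathcal M(|\nabla u|^{q_1}\chi_{Q_{2R}})>\lambda\}\cap Q_R| \\
&\quad + |\{\mathcal M(|f|^{q_1}\chi_{Q_{2R}})>\eta\lambda\}\cap Q_R|,
\end{align*}
valid for every $\lambda>\lambda_0\sim\dashint_{Q_{2R}}|\nabla u|^{q_1}\dz$, with $K$ a large absolute constant, $\eta>0$ a free parameter, and $\epsilon\to 0$ as $\delta\to 0$. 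The exponent $q_1$ is now fixed by the openness of $\Acal_p$ (Lemma~\ref{cor:leftopen}) so that $\omega\in\Acal_{p/q_1}$.

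\textbf{Step 3 (weighted integration and absorption).} Multiplying the level-set inequality by $\lambda^{p/q_1-1}$ and integrating against $\omega\,d\lambda$, and then invoking that $\mathcal M$ is bounded on $L^{p/q_1}_\omega$ precisely because $\omega\in\Acal_{p/q_1}$, one arrives at
\[
\int_{Q_R}|\nabla u|^p\omega\dz\le C\epsilon\int_{Q_{2R}}|\nabla u|^p\omega\dz+C\int_{Q_{2R}}|f|^p\omega\dz+C\lambda_0^{p/q_1}\,\omega(Q_{2R}).
\]
Fixing $\delta$ so small that $C\epsilon<1/2$, a Giaquinta-Modica hole-filling iteration on concentric sub-cylinders absorbs the first term on the right, while the last one, once renormalised by $|Q_R|$, reproduces precisely the second summand in~\eqref{unlocal}.

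\textbf{Main obstacle.} The delicate point is the joint calibration of the parameters: the internal exponent $q_1$ must simultaneously lie in $(1,p)$, satisfy $\omega\in\Acal_{p/q_1}$, and be compatible with the a priori integrability $\tilde q$ of $\nabla u$ (if $\tilde q<q_1$, a preliminary self-improvement $\nabla u\in L^{q_1}_{\mathrm{loc}}$ via the unweighted Lemma~\ref{lem:CZ} has to precede the scheme); at the same time the smallness $\delta=\delta(p,c_1,c_2,\Acal_p(\omega))$ must be tuned so that $\epsilon(\delta)$ is small enough for the absorption step to close. Once this alignment is achieved, the weighted argument closely parallels the unweighted template.
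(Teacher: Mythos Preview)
Your plan is essentially the paper's own: the same frozen-coefficient comparison on a Calder\'on--Zygmund cube, the same good-$\lambda$ inequality for $\mathcal M_{q_1}(\nabla u)$, and the same use of the openness of $\mathcal A_p$ (Lemma~\ref{cor:leftopen}) to fix an internal exponent $q_1$ with $\omega\in\mathcal A_{p/q_1}$, followed by the reverse H\"older inequality to upgrade the Lebesgue-measure level-set estimate to an $\omega$-measure one.

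There is, however, a genuine gap in your Step~3. The Giaquinta--Modica hole-filling you propose requires the quantity $\int_{Q_{2R}}|\nabla u|^p\omega\,dz$ to be \emph{a~priori finite}, and this is exactly the conclusion of the lemma; the hypothesis only gives $\nabla u\in L^{\tilde q}$, which says nothing about $L^p_\omega$. The paper avoids this as follows: it localises the maximal function to $Q_R$ (not $Q_{2R}$), so that after weighting, the good-$\lambda$ inequality takes the form
\[
\int_{k2^{n+2}\Lambda}^{m}\lambda^{p-1}\omega(O_\lambda\cap Q_R)\,d\lambda
\le Ck^{p}(\epsilon+\delta)^{1/r'}\int_{k2^{n+2}\Lambda}^{m}\lambda^{p-1}\omega(O_\lambda\cap Q_R)\,d\lambda+\text{(data terms)},
\]
with the \emph{same} integral on both sides. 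For each finite $m$ this integral is trivially bounded by $m^{p}\omega(Q_R)<\infty$, so the absorption is legitimate; one then sends $m\to\infty$. No iteration over concentric cylinders is needed, and indeed your iteration cannot close without the qualitative membership $\nabla u\in L^p_\omega$ that you do not have. Replacing your hole-filling step by this truncation-in-$\lambda$ device fixes the argument; the remaining ``calibration'' you flag is handled exactly as you say.
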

\begin{proof}[Proof of Lemma~\ref{thm:ell-local}]
Recall that for a weight $\omega$ and a set $S \subset \setR^{n+1}$ we write $\omega(S):=\int_S \omega \dz$. We can find and fix $q\in (1,\tilde{q})$ such that ${\frac{p}{q}} \ge \sigma$, where $\sigma>1$ is introduced in  Lemma~\ref{cor:leftopen}.  Therefore $\omega \in \Acal_{\frac{p}{q}}$. Note that $\nabla u\in L^{q}({Q_{2R}})$, because ${Q_{2R}}$ is bounded. Let us next introduce the centred maximal operator and the respective restricted maximal operator with power $q$
\[
 (M_q(g))(z):=\sup_{r>0}\bigg(\dashint_{Q_r(z)}\abs{g}^q \dy\bigg)^\frac1q,\quad (M_q^{<\rho}(g))(z):=\sup_{\rho\geq r>0}\bigg(\dashint_{Q_r(z)}\abs{g}^q dy \bigg)^\frac1q
\]
Since $M_q(g)=(M(\abs{g}^q))^\frac1q$ and $\omega\in \Acal_\frac{p}{q}$, the operator $M_q$ is bounded in $L^p_{\omega}(\setR^{n+1})$, see \eqref{defAp}. 

Having introduced the auxiliary notions, let us turn to the main part of our proof of \eqref{unlocal}. First, we set
\begin{equation}\label{def:Lambda}
  \Lambda:=\bigg(\dashint_{{Q_{2R}}} \abs{\nabla u}^q\bigg)^\frac1q.
\end{equation}
Thus for any $Q\subset \setR^{n+1}$ we immediately have
\begin{equation}
\label{eq:lamL}
 \bigg(\dashint_Q \abs{\chi_{{Q_{2R}}}\nabla u}^q \bigg)^\frac1q\leq \left(\frac{\abs{{Q_{2R}}}}{\abs{Q}}\right)^\frac1q\Lambda.
\end{equation}
Next, since the proof of \eqref{unlocal} will be based on the proper (`good-$\lambda$') estimates, we introduce the {open} level sets\footnote{The fact that the level sets are open follows from continuity of the maximal function.}
\begin{equation}\label{def:Olambda}
 O_\lambda:=\set{z\in \setR^{n+1}: \, M_q(\chi_{{Q_{R}}}\nabla u)(z)> \lambda}.
\end{equation}
We intend to use the Calder\'{o}n-Zygmund decomposition. Thus, for a fixed $\lambda \ge 2^{n+2} \Lambda$ and any $z\in {Q_{R}}\cap Q_{\lambda}$, using \eqref{eq:lamL}, the definition of $O_\lambda$ and continuity  of integrals with respect to the integration domain, we can find a cube $Q_{r_z}(z)$ such that
\begin{align}
\label{eq:cover1}
 \lambda^q< \dashint_{Q_{r_z}(z)}\abs{\chi_{Q_{R}}\nabla u}^q \leq 2\lambda^q\quad \text{ and } \quad\dashint_{Q_{r}(z)}\abs{\chi_{Q_{R}}\nabla u}^q  \leq 2\lambda^q \quad\text{ for all }r\geq r_{z}.
\end{align}
Moreover, using the estimate \eqref{eq:lamL}, the definition \eqref{eq:cover1} and the restriction imposed on $\lambda$, we have that
\[
  2^{n+2} \Lambda^q  \le 2^{(n+2)q} \Lambda^q \le \lambda^q < \dashint_{Q_{r_z}(z)}\abs{\chi_{Q_{R}}\nabla u}^q \le \frac{\abs{{Q_{R}}}}{\abs{Q_{r_z}(z)}} \Lambda^q = 2^{{n+2}} \frac{\abs{{Q_{R}}}}{\abs{2 Q_{r_z}(z)}} \Lambda^q.
\]
Consequently
\begin{equation}
\label{eq:cover1n}
\abs{2 Q_{r_z}(z)}\leq \abs{{Q_{R}}}.
\end{equation}

Next, using the Besicovich covering theorem, we extract a countable covering $\{Q_i\}_{i\in \mathbb{N}}$ of $O_{\lambda}$, where  $Q_i:=Q_{r_{z_i}}(z_i)$, such that the
$Q_i$'s have finite intersection, i.e. there exists a constant $C$ depending only on $n$ such that for all $i\in \mathbb{N}$
\begin{equation}\label{finite:prop}
\# \{j\in \mathbb{N}; \, Q_i\cap Q_j \neq \emptyset\} \le C.
\end{equation}
In addition, it follows from the construction that
\begin{align}\label{realn}
O_{\lambda}\cap {Q_{R}}=\bigcup_{i\in \mathbb{N}} (Q_i\cap {Q_R}).
\end{align}

Using the fact that $Q_i=Q_{r_i}(z_i)$ with some $z_i\in {Q_{R}}$ and \eqref{eq:cover1n}, we observe that  $2Q_i\subset {Q_{2R}}$ and for a constant $C$ depending only on the dimension $n$
\begin{align}
\label{eq:cover2}
\abs  {Q_i}\le C(n)\abs{Q_i\cap {Q_R}}.
\end{align}
Since $\omega\in \mathcal{A}_p$ the above relation implies
(see e.g. Stein~\cite{Ste93}, \S V.1.7)
\begin{align}
  \label{eq:cover2omega}
  \omega(Q_i)\le C(n, A_p(\omega))\, \omega(Q_i\cap Q_R).
\end{align}

Next, we are going to use the re-distributional estimates with respect to the right hand side. To this end  for an arbitrary $\epsilon>0$ and $k\ge 1$, we introduce the re-distributional set
\[
 U^\lambda_{\epsilon, k}:=O_{k\lambda}\cap\set{z\in \setR^{d}:\, M_q(f \chi_{Q_{2R}})(z)\leq \epsilon\lambda},
\]
where $f$ is given in \eqref{eq:lind}, i.e., it is the right hand side of our problem.
Finally, let us assume for a moment that the  following statement holds true (here $\delta$ comes from our assumptions on tensor $\tilde{A}$, see \eqref{ass:AAA}):
\begin{equation}
\begin{aligned}
 \label{eq:redis}
 &\textrm{There exists $k\ge 1$ depending only on $c_1$, $c_2$, $d$, $p$, $A_p(\omega)$ such that for all $\varepsilon\in  (0,1)$}\\
  &\textrm{and all $\lambda\ge 2^{n+2}\Lambda\;$ it holds }\quad
\abs{Q_i\cap U^\lambda_{\epsilon, k}\cap Q_R}\leq C(c_1,c_2,n)(\epsilon +\delta)\abs  {Q_i}.
\end{aligned}
\end{equation}

We continue for now with the proof and postpone justifying \eqref{eq:redis} to the end. Using  the  H\"older inequality, the reverse H\"older inequality
for $\Acal_p$-weights (compare \eqref{reversom}), \eqref{eq:redis}
and~\eqref{eq:cover2omega}, we obtain for some $r>1$ depending only on
$d$, $p$ and $A_p(\omega)$ 
\begin{align*}
  \omega(Q_i \cap U_{\epsilon, k}^\lambda \cap {Q_R}) &\leq  \abs  { Q_i}
  \bigg(\dashint_  { Q_i}\omega^r
 \bigg)^\frac1r\left(\frac{\abs{Q_i\cap
        U^\lambda_{\epsilon,k}\cap {Q_R}}}{\abs  { Q_i}}\right)^\frac1{r'}
  \\
  &\leq C(d,p,A_p(\omega),c_1,c_2)(\epsilon+\delta)^\frac1{r'}
  \omega(Q_i)\\
  &\leq  C(d,p,A_p(\omega),c_1,c_2)(\epsilon+\delta)^\frac1{r'}
  \omega(Q_i \cap {Q_R}).
\end{align*}
Consequently, using the subadditivity of $\omega$ and the finite intersection property of $Q_i$, i.e., the estimate \eqref{finite:prop}, we find
\begin{align}
\label{eq:smallness}
\omega(U_{\epsilon, k}^\lambda \cap {Q_R})\leq C(d,A_p(\omega),c_1,c_2)(\epsilon+\delta)^\frac1{r'}\omega(O_\lambda\cap {Q_R}),
\end{align}
which is the essential estimate for what follows.

Finally, using the Cavalieri principle (the Fubini theorem), we obtain
\begin{align}\label{sce}
\begin{aligned}
 \int_{{Q_R}}\abs{\nabla u}^p \omega \dz &= p\int_0^\infty \omega(\set{(\nabla u)\chi_{{{Q_R}}}>\lambda})\lambda^{p-1} {\rm{d}}\lambda\\
 &\leq C\Lambda^p\omega({{Q_R}}) + p\int_{k 2^{n+2}\Lambda}^\infty\lambda^{p-1}\omega(O_\lambda \cap {{Q_R}})\rm{d}\lambda.
\end{aligned}
\end{align}
Therefore, to get the estimate \eqref{unlocal}, we need to estimate the last term on the right hand side. To do so let us use the definition of $U^{\lambda}_{\epsilon, k}$ and a change of variables to start with the following estimate valid  for all $m> k 2^{n+2}\Lambda$ (note here that the integration domain is chosen such that $\lambda/k >2^{n+2}\Lambda$ so that we can use \eqref{eq:smallness} below)
\begin{align*}
& \int_{k2^{n+2}\Lambda}^m\lambda^{p-1}\omega(O_\lambda \cap {{Q_R}})\rm{d}\lambda\\
&\leq \int_{k 2^{n+2}\Lambda}^m\lambda^{p-1}\omega(U^{\frac{\lambda}{k}}_{\epsilon, k}\cap {{Q_R}}){\rm{d}}\lambda
+ \int_{k 2^{n+2}\Lambda}^m\lambda^{p-1}\omega\Big(\{M_q( f\chi_{Q_{2R}})>\frac{\epsilon\lambda}{k}\}\Big){\rm{d}}\lambda\\
&\overset{\eqref{eq:smallness}}\leq C(\epsilon+\delta)^\frac1{r'}\int_{k 2^{n+2}\Lambda}^m\lambda^{p-1}\omega(O_\frac{\lambda}{k}\cap {Q_R}){\rm{d}}\lambda+ \frac{k^p}{p\epsilon^p}\int_{\setR^n}\abs{M_q(f \chi_{{Q_{2R}}})}^p \omega (z) \dz\\
&= C k^{p}(\epsilon+\delta)^\frac1{r'}\int_{2^{n+2}\Lambda}^\frac{m}{k}\lambda^{p-1}\omega(O_\lambda\cap {Q_R}){\rm{d}}\lambda +   \frac{k^p}{p\epsilon^p}\int_{\setR^n}\abs{M(f^q \chi_{{Q_{2R}}})}^{p/q} \omega (z) dz \\
&\le Ck^{p}(\epsilon+\delta)^\frac1{r'}\int_{2^{n+2}\Lambda}^{k 2^{n+2}\Lambda}\lambda^{p-1}\omega(O_\lambda\cap {Q_R}){\rm{d}}\lambda+
Ck^{p}(\epsilon+\delta)^\frac1{r'}\int_{k 2^{n+2}\Lambda}^m\lambda^{p-1}\omega(O_\lambda\cap {Q_R}){\rm{d}}\lambda \\
&\quad + C_1  \frac{k^p}{p\epsilon^p} \int_{Q_{2R}}\abs{f}^p \omega \dz,
\end{align*}
where for the last inequality we have used the fact that $\omega \in \Acal_{\frac{p}{q}}$, the related strong property of maximal function and $C_1 = C(d, p,c_1, c_2,A_p(\omega))$. Observe that $k$ is already fixed by \eqref{eq:redis}. At this point, we fix the maximal value of $\delta$ arising in the assumption of Lemma~\ref{thm:ell-local}. Namely, we set  $\epsilon:=\delta$ and chose $\delta$ such that $Ck^{p}(2 \delta)^\frac1{r'}\le\frac12$. Consequently, we can  absorb the middle term of the final inequality above into the left hand side and letting $m\to \infty$, we find that
\begin{align*}
 \int_{k 2^{n+3}\Lambda}^{\infty}\lambda^{p-1}\omega(O_\lambda \cap {Q_R})\rm{d}\lambda
&\le C(k,p,q,A_p(\omega))\left(\int_{Q_{2R}}\abs{f}^p \omega \dz+ \Lambda^p \omega({Q_R})\right).
\end{align*}
Using this in \eqref{sce}, recalling the definition of $\Lambda$ (see \eqref{def:Lambda}) and via $q \le \tilde q$, we find \eqref{unlocal}.

To finish the proof, it remains to validate  \eqref{eq:redis}. To this end assume that $Q_i\cap {Q_R} \cap U^\lambda_{\epsilon, k}\neq\emptyset$. 
So,  taking $z \in Q_i \cap U^\lambda_{\epsilon, k}$ via  the definition of $U^{\lambda}_{\epsilon,k}$, for any $r>0$
\[
\dashint_{Q_r (z)}\abs{f (y)}^q \dy \leq \epsilon^q \lambda^q.
\]
In particular, it holds also for $r= 4r_{z_i}$ and since $Q_{4r_{z_i}} (z) \supset 2Q_i$, we get
\begin{align}\label{smalfe}
 \bigg(\dashint_{2Q_i}\abs{f}^q \dz\bigg)^\frac1q\leq 2^{n+2} \epsilon \lambda.
\end{align}
Let us now freeze coefficients of the tensor $\tilde{A}(z)$ in the centre $z_i$  of the cube $Q_i$, writing  $\tilde{A}_i =  \tilde{A}(z_i)$ and consider  the following constant coefficient problem (that will serve as a comparison problem)
$$
\begin{aligned}
\partial_t w -\divergence \tilde{A}_i\nabla w&=\divergence(( \tilde{A}-\tilde{A}_i) \nabla u-f) &&\textrm{in }2Q_i,\\
w&=0 &&\textrm{on }\partial 2Q_i.
\end{aligned}
$$
Lemma~\ref{lem:CZ} yields existence of a solution to the above problem as well as the following estimate
\begin{align}
\label{eq:comp}
 \dashint_{2Q_i}\abs{\nabla w}^q\, \dz \leq C\dashint_{2Q_i}\abs{\tilde{A}-\tilde{A}_i}^q\abs{\nabla u}^q \dz + C\dashint_{2Q_i}\abs{f}^q \dz \leq C(\epsilon^q +\delta^q)\lambda^q,
\end{align}
where for the second inequality we used \eqref{eq:cover1} with \eqref{eq:cover1n} (implying   $2Q_i\subset {Q_{2R}}$), \eqref{smalfe} and the assumed  $|\tilde{A}(z_1)-\tilde{A}(z_2)|\le \delta$ for all $z_1,z_2 \in {Q_{2R
}}$. Furthermore, since $u$ satisfies \eqref{eq:lind}, the difference  $h= u-w \in L^q (W^{1,q})$ fulfils in the sense of distributions
\begin{equation}
 \label{eq:har}
\partial_t h -\divergence (\tilde{A}_i\nabla h) = 0\quad \text{ in }\quad2Q_i.
\end{equation}
Since it is a constant coefficient strongly parabolic problem, $h$ is locally smooth (for instance via localisation that produces lower-order right hand side, classical regularity theory for initial-boundary value problems and bootstrapping this step on smaller cubes). Hence, we can differentiate it and obtain the following estimate (again via localisation, regularity theory for initial-boundary value problems, embeddings and bootstrapping)
\begin{align}
  \label{eq:harnack}
  \sup_{\frac32 Q_i}\abs{\nabla h}\leq
  C \bigg( \dashint_{2Q_i}\abs{\nabla h}^q \dz \bigg)^\frac1q
\end{align}
where the constant $C$ depends only on $n$, $c_1$ and $c_2$.

Next, for any  $z\in Q_i$ and $r>r_i /2$, we have that $Q_{r}(z)\subset Q_{3r}(z_i)$. 
Consequently, it follows from \eqref{eq:cover1} that
\[
 \dashint_{Q_r(z)}\abs{\chi_{{Q_{R}}} \nabla u}^q \dy \leq 3^{n+2} \dashint_{Q_{3r}(z_i)}\abs{\chi_{{Q_{R}}}\nabla u}^q \dy  \leq 3^{n+3}\lambda^q.
\]
Therefore, assuming that  $k\geq 3^{n+3}$, we obtain
 that for all $z\in Q_i\cap \set{M_q(\nabla u \chi_{{Q_R}} )>k\lambda}$
\[
 M_q(\nabla u \chi_{{Q_R}} )(z)=M_q^{<\frac{r_i}{2}}(\nabla u \chi_{{Q_R}} )(z).
 \]
This identity, the sublinearity of the maximal operator, $u= w+h$ and \eqref{eq:harnack} imply  that for all $z\in Q_i\cap \set{M_q(\nabla u \chi_{{Q_R}} )>k\lambda}$
\begin{align*}
 M_q(\nabla u \chi_{{Q_R}} )(z)&=M_q^{<\frac{r_i}{2}}(\nabla u \chi_{{Q_R}} )(z)
\leq M_q^{<\frac{r_i}{2}}(\nabla h)(z)+M_q^{<\frac{r_i}{2}}(\nabla w)(z)\\
 &\leq  C\bigg(\dashint_{2Q_i}\abs{\nabla h}^q \dy\bigg)^\frac1q +M_q^{<\frac{r_i}{2}}(\nabla w)(z).
\end{align*}
Finally, using the triangle inequality (applied for $h=u-w$) and  the estimates \eqref{eq:comp} with $(\epsilon^q +\delta^q) \le 1$ as well as \eqref{eq:cover1}, we conclude
\begin{align*}
M_q(\nabla u \chi_{{Q_R}} )(z) \leq C\lambda+M_q^{<\frac{r_i}{2}}(\nabla w)(z).
\end{align*}
Hence, setting $k:=\max\set{C+1,3^{n+3}}$, we obtain
\[
\begin{aligned}
Q_i\cap {Q_R} \cap U^\lambda_{\epsilon, k} = Q_i\cap {Q_R} \cap
\set{M_q(\nabla u \chi_{{Q_{R}}})> k\lambda} \cap\set{ M_q(f \chi_{Q_{2R}})\leq \epsilon\lambda}  \subset \\
 Q_i\cap {Q_R} \cap  \set{z \in Q_i \; |\; M_q^{<\frac{r_i}{2}}(\nabla w)\geq \lambda} \cap\set{ M_q(f \chi_{Q_{2R}})\leq \epsilon\lambda} .
 \end{aligned}
\]
Since the  restricted maximal operator above does not see the values outside $2 Q_i$, we can extend $\nabla w$ by zero to the whole $\mathbb{R}^{n+1}$, invoke the weak estimate for the maximal functions and finally  the estimate \eqref{eq:comp} to conclude from the above inclusion
\[
 \abs{Q_i\cap {Q_R} \cap U^\lambda_{\epsilon, k} } \leq \abs{ \set{ M_q^{<\frac{r_i}{2}}(\chi_{2Q_{i}}\nabla w)\geq \lambda}} \leq \frac{C}{\lambda^q} \int_{2Q_i}\abs{\nabla w}^q \dz \leq C(\epsilon^q+\delta^q)\abs{Q_i}, \]
which finishes the proof of \eqref{eq:redis} and hence of Lemma~\ref{thm:ell-local}.
\end{proof}

\subsubsection{Estimates near the boundary.}
In this subsection, we derive the estimates on parabolic cylinders near the boundary. Here, we also use the extension of the solution to the time interval $(-1, T+1)$ and consider the domain $Q_{\tilde{T}}$. Thus, we are interested only in the behaviour of the solution near $\partial \Omega$. Hence, we say that $Q_R(z_0)$ is a \emph{boundary cylinder} if $z_0\in \partial \Omega \times [0,T]$.
The main result of this subsection is the following lemma.
\begin{lemma}
\label{lem:halfball}
Let $\partial \Omega \in \mathcal{C}^1$,  $p\in (1,\infty)$,   $\omega \in \Acal_p$ and $\tilde{A}\in L^{\infty}(Q_{\tilde{T}};\setR^{n\times N \times n \times N})$ be a strongly elliptic tensor. Then there exist $R_0>0$  and $\delta>0$ depending only on $p$ $\Omega$, $\Acal_p  (\omega)$ and ellipticity constants $c_1$, $c_2$,  such that if $Q_R$ with $R\in (0,R_0)$ is a boundary cylinder and
\begin{equation}\label{spb}
|\tilde{A}(z_1)-\tilde{A}(z_2)| \le \delta \quad \textrm{ for all } z_1,z_2\in {Q_{2R} \cap Q_{\tilde{T}}},
\end{equation}
then for any $u\in L^{\tilde{q}}(Q_{\tilde{T}}\cap Q_{2R}; \mathbb{R}^N)$ with $\nabla u\in  L^{\tilde{q}}(Q_{\tilde{T}}\cap Q_{2R}; \mathbb{R}^{N\times n})$ and $u=0$ on $((-1,T+1)\times \partial \Omega)\cap Q_R$ and any  $f\in L^p_\omega((Q_{\tilde{T}}\cap Q_{2R}; \mathbb{R}^N); \setR^{n\times N})$  satisfying  for any $ \phi\in \mathcal{C}^{1}_0(Q_{\tilde{T}}\cap Q_{2R}; \mathbb{R}^N))$
\begin{equation}\label{wfL2}
 \int_{{Q_{\tilde{T}}}}  - u (z) \,   \partial_t \phi (z) \dz + \tilde{A}(z) \nabla u (z) \cdot \nabla \phi (z) \dz - f (z) \cdot \nabla \phi (z) \dz = 0,
\end{equation}
the following boundary local estimate holds
\begin{equation}\label{uptobound}
  \bigg(\dashint_{Q_R \cap {Q_{\tilde{T}}}}\abs{\nabla u}^p\omega \dz\bigg)^\frac1p\leq
  C\bigg(  \dashint_{Q_{2R} \cap {Q_{\tilde{T}}}}\abs{f}^p\omega \dz\bigg)^\frac1p
  + C \bigg(\dashint_{Q_{2R} \cap {Q_{\tilde{T}}}}\omega \dz\bigg)^{\frac{1}{p}}\bigg(
  \dashint_{Q_{2R} \cap {Q_{\tilde{T}}}}\abs{\nabla u}^{\tilde{q}} \dz\bigg)^{\frac{1}{\tilde{q}}},
\end{equation}
with the constant $C$ depending only  on $\Omega$, $p$, $c_1$, $c_2$ and $\Acal_p (\omega)$.
\end{lemma}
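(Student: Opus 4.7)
The plan is to reduce the boundary case to a situation structurally parallel to the interior argument of Lemma~\ref{thm:ell-local} by straightening the boundary. Since $\partial\Omega\in \mathcal{C}^1$, for every $z_0\in\partial\Omega\times[0,T]$ there is a radius $R_0>0$ and a bi-Lipschitz $\mathcal{C}^1$ diffeomorphism $\Psi$ sending $Q_{2R}\cap Q_{\tilde{T}}$ onto a region of the form $Q^+_{2R}:= (B^+_{2R}\times J)$ with $B^+_{2R}:=B_{2R}\cap\{x_n>0\}$, such that the flat face of $Q^+_{2R}$ corresponds to $(Q_{\tilde{T}}\cap Q_{2R})\cap ((-1,T+1)\times\partial\Omega)$. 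After this change of variables the transformed function $\tilde u:=u\circ\Psi^{-1}$ satisfies a parabolic system of the same type with a transformed coefficient tensor $\tilde{A}^{\sharp}$ and right hand side $\tilde{f}^{\sharp}$. By choosing $R_0$ small enough and using uniform continuity of $D\Psi^{-1}$, the oscillation of $\tilde A^{\sharp}$ on $Q^+_{2R}$ stays of the order of the original $\delta$ plus an $R_0$-dependent contribution that we absorb by shrinking $R_0$; strong ellipticity is preserved with slightly modified constants. Moreover $\tilde u=0$ on the flat face, and the $L^p_\omega$ and $L^{\tilde q}$ quantities on both sides of \eqref{uptobound} are equivalent under $\Psi$ up to multiplicative constants.

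It therefore suffices to establish \eqref{uptobound} for the transformed problem on $Q^+_{2R}$ with zero Dirichlet data on the flat face. The argument then follows the template of Lemma~\ref{thm:ell-local}: define the level sets $O_\lambda=\{M_q(\chi_{Q^+_R}\nabla \tilde u)>\lambda\}$, perform a Besicovich (or Whitney-type) decomposition into cubes $Q_i=Q_{r_i}(z_i)$ with $z_i\in Q^+_R$ satisfying the analogues of \eqref{eq:cover1}--\eqref{eq:cover2omega}. The crucial point is that each $Q_i$ must be handled in two regimes: either $2Q_i\subset Q^+_{2R}$ stays strictly interior to the half-cylinder, in which case the interior comparison of Lemma~\ref{thm:ell-local} applies verbatim; or $2Q_i$ meets the flat face, in which case we compare with the constant-coefficient Dirichlet problem
\begin{equation*}
\partial_t w-\divergence(\tilde A^{\sharp}(z_i)\nabla w)=\divergence((\tilde A^{\sharp}-\tilde A^{\sharp}(z_i))\nabla \tilde u-\tilde f^{\sharp})
\end{equation*}
on $2Q_i\cap Q^+_{2R}$ with $w=0$ on the whole parabolic boundary of this set, extended by the lateral flat face. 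Lemma~\ref{lem:CZ} applied in the half-space setting provides the analogue of \eqref{eq:comp} yielding $\dashint|\nabla w|^q\le C(\epsilon^q+\delta^q)\lambda^q$.

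The difference $h=\tilde u-w$ then solves $\partial_t h-\divergence(\tilde A^{\sharp}(z_i)\nabla h)=0$ on $2Q_i\cap Q^+_{2R}$ with $h=0$ on the flat face. Here the key boundary ingredient is the up-to-the-flat-boundary $L^\infty$ gradient estimate for constant-coefficient strongly parabolic systems with homogeneous Dirichlet data,
\begin{equation*}
\sup_{\tfrac32 Q_i\cap Q^+_{2R}}\abs{\nabla h}\le C\Bigl(\dashint_{2Q_i\cap Q^+_{2R}}\abs{\nabla h}^q\dz\Bigr)^{1/q},
\end{equation*}
which follows from classical interior/boundary regularity theory (differentiating tangentially along the flat face, reflecting across it, and bootstrapping). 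Granted this bound the good-$\lambda$ inequality \eqref{eq:redis} is obtained exactly as in the interior case, and the Cavalieri integration \eqref{sce} carries over to deliver \eqref{uptobound}. The main obstacle is really the combination of the two regimes in one covering argument together with a careful choice of $R_0$ ensuring that the additional oscillation produced by $\Psi$ does not exhaust the admissible $\delta$; the boundary regularity bound itself is standard but has to be invoked in a form uniform in the geometry of $2Q_i\cap Q^+_{2R}$, which is why the cubes $Q_i$ are chosen coaxial with the flat face after flattening.
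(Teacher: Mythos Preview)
Your approach is correct but takes a genuinely different route from the paper. Both begin by flattening the boundary via a $\mathcal{C}^1$ change of variables. The paper then performs an \emph{odd reflection} of $u$, $\tilde A$, $f$ and $\omega$ across the flat face into the exterior half, verifies directly that the reflected objects satisfy the same equation in the \emph{full} cylinder $2B\times 2I$ (this verification is the core of the paper's proof), and then applies the interior Lemma~\ref{thm:ell-local} as a black box to the extended problem. No good-$\lambda$ argument is repeated, no case distinction on cubes is needed, and no separate boundary regularity statement for the comparison function is invoked.

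You instead stay on the half-cylinder and re-run the entire Calder\'on--Zygmund/good-$\lambda$ machinery there, splitting the covering into interior cubes and cubes touching the flat face, and for the latter using a half-space comparison problem together with an up-to-the-flat-boundary $L^\infty$ gradient bound for the homogeneous constant-coefficient system. That boundary $L^\infty$ bound is itself normally obtained by odd reflection, so the reflection idea is still present in your argument---it is just deferred to the local comparison step rather than done once globally. Your template is the one common in the Byun--Wang style literature and makes the role of boundary regularity for the comparison problem explicit; the paper's route is more economical since it avoids the two-regime split entirely. One small point to tighten: you invoke Lemma~\ref{lem:CZ} ``in the half-space setting'' on $2Q_i\cap Q^+_{2R}$, but that set has corners and is not literally covered by Lemma~\ref{lem:CZ}; the needed $L^q$ estimate for $w$ on such sets is again most cleanly obtained by odd reflection across the flat face, which brings you closer to the paper's strategy anyway.
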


\begin{proof}
Briefly, the strategy is to straighten locally the boundary and via the null extension in time and an odd reflection in space to reduce the boundary case to the interior case of the previous lemma. The details follow.

Since $\partial \Omega\in \mathcal{C}^1$, we know that for any $\varepsilon>0$ we can find $R_0>0$ such that we can locally change the coordinates $y =\Psi (x), x \in B_{2R_0}(x_0)$ (translation and rotation) and we are allowed  to write
\[
 \Psi (\partial \Omega \cap B_{2R_0}(x_0)) = \{(y',y_n) :  \, |y'|<\alpha, \, a(y')=y_n\}
\]
with $a \in \mathcal{C}^1([-\alpha,\alpha]^{n-1}; \mathbb{R})$
, where $y' = y_1, \dots y_{n-1}$ and  that
\begin{equation}\label{small_pert}
\sup_{|y'|<\alpha} |a(y')|+\epsilon |\nabla a (y')| \le \epsilon^2.
\end{equation}
For brevity, from now on let us write $B:=B_{R}(x_0)$ with an arbitrary $R\le R_0$ and $I$ for the time interval of $Q_R=B \times I$. In addition, we can also assume that $R<1/2$ and consequently $2I\subset [-1,T+1]$. Let us introduce the related curvilinear interior and exterior half-cubes
$$
\begin{aligned}
H^+&:=\{(y',y_n):\, |y'|<\alpha, \; a(y')-\beta< y_n <a(y')\}\supset \Psi (\Omega \cap 2B),\\
H^{-}&:=\{(y',y_n):\, |y'|<\alpha, \; a(y')< y_n <a(y')+\beta\}\supset \Psi (\Omega^c \cap 2B),
\end{aligned}
$$
where the inclusions follow from choice of $\beta$ and regularity of $\Omega$. Let us observe that due to $N$ being finite, $\alpha$ and $\beta$ can be treated as fixed. The related sets in the original $x$ variables are
$$
\begin{aligned}
\Omega_0^+:= \Psi^{-1} (H^+) \supset \Omega \cap 2B,\quad 
\Omega_0^{-}:= \Psi^{-1} (H^-) \supset \Omega^c \cap 2B.
\end{aligned}
$$
Next, on the level of variables $y$, let us define the curvilinear reflection $R: H^+ \to H^-$ as
$$
R(y',y_n):=(y',2a(y')-y_n) 
$$
Observe that $|\det \nabla_y R|\equiv 1$ and $R$ and $R^{-1}$ are $\mathcal{C}^1$ mappings. Consequently, also
\[
T := \Psi^{-1} \circ R \circ \Psi: \Omega_0^+ \to \Omega_0^-
\]
satisfies $|\det J|\equiv 1$, where we defined $J:=\nabla_x T$. Mappings $T$ and $T^{-1}$ are $\mathcal{C}^1$.

Finally, let us extend all quantities into $\Omega_0^-$ as follows:
\begin{align*}
\tilde{u}(t,x)&:=\left\{\begin{aligned}
&u(t,x) &&\textrm{for } x\in \Omega_0^+,\\
&-u(t, T^{-1}(x)) &&\textrm{for } x\in \Omega_0^{-},
\end{aligned}
\right.  \\
\tilde{A}(t,x)&:=\left\{\begin{aligned}
&\tilde{A}(t,x) &&\textrm{for } x\in \Omega_0^+,\\
&(J(T^{-1}x) \otimes J(T^{-1}x)) A(t, T^{-1}x)&&\textrm{for } x\in \Omega_0^{-},
\end{aligned}
\right.  \\
\tilde{f}(t,x)&:=\left\{\begin{aligned}
&f(t,x) &&\textrm{for } x\in \Omega_0^+,\\
&-J(T^{-1}x)f(t, T^{-1}(x)) &&\textrm{for } x\in \Omega_0^{-},
\end{aligned}
\right.  \\
\tilde{\omega}(t,x)&:=\left\{\begin{aligned}
&\omega(t,x) &&\textrm{for } x\in \Omega_0^+,\\
&\omega(t, T^{-1}(x)) &&\textrm{for } x\in \Omega_0^{-},
\end{aligned}
\right.
\end{align*}
where $\otimes$ denotes the outer product of two matrices (a tensor).
Let us also introduce
$$
M :=  \Omega_0^+ \cup \Omega_0^- \cup  \partial \Omega_0 \supset 2B.
$$
Since $u$ has zero trace on $\partial \Omega$, we see that
$\tilde u \in  L^{\tilde{q}}({2I}, W^{1,  \tilde{q}} (M))$.


Our aim is now to show that for any $\varphi \in \mathcal{C}^{1}_0(2B \times 2I)$  the following identity holds
\begin{equation}
 I := \int_{{M \times 2 I } }  \big[ - \tilde u  \,   \partial_t \phi   + \tilde A \nabla \tilde u  \cdot \nabla \phi  - \tilde f  \cdot \nabla \phi  \big] \dx dt = 0. \label{extend}
\end{equation}

First, let us take any $\eta \in \mathcal{C}^{1}(\Omega_0^+ \times 2 I)$ and define $\tilde \eta \in \mathcal{C}^{1}(\Omega_0^- \times 2 I)$ as
$ \tilde \eta (t,x):= \eta (t, T^{-1} (x)) $. We observe, via a variable change $T( \Omega_0^+) = \Omega_0^-$, a straightforward computation and our definitions of the respective extensions, that
\begin{equation} \label{eq:mirror}
\intop_{\Omega_0^- \times (2 I)}   \!\!\!\!\!\! [- \tilde u \,   \partial_t  \tilde \eta + \tilde A \nabla \tilde u \cdot \nabla  \tilde \eta - \tilde f \cdot \nabla  \tilde \eta ] \dx dt =
- \!\!\!\!\!\!\!\!  \intop_{\Omega_0^+ \times (2 I)}   \!\!\!\!\!\! [- u \,   \partial_t  \eta +  \tilde{A} \nabla  u \cdot \nabla \eta -  f \cdot \nabla   \eta ] \dx dt .
\end{equation}
It is important  to notice that $\eta$ and $\tilde \eta$ may not vanish on the boundary, in particular on $\partial\Omega_0$, since the relation \eqref{eq:mirror} is just a variable change (not PDE related).
Next let us take an arbitrary $\varphi \in \mathcal{C}^{1}_0(2B \times 2I)$ and define its symmetrisation as
\begin{align*}
  \overline{\phi} (t,x) &:=
  \begin{cases}
      \phi (t,x) &\qquad \text{for } x \in \Omega_0^+, \\
   \tilde \phi (t,x) = \phi (t, T^{-1} (x) ) &\qquad \text{for } x \in \Omega_0^-,
  \end{cases}
\end{align*}
then $\overline{\phi} \in \mathcal{C}^{1}_0(2B \times 2I)$
and~\eqref{eq:mirror} implies
\begin{align*}
 \int_{M \times  (2 I)}  \big[ - \tilde u  \, \partial_t   \overline \phi   + \tilde A \nabla \tilde u  \cdot \nabla  \overline \phi  - \tilde f  \cdot \nabla  \overline \phi  \big] \dx dt = 0
\end{align*}
Subtracting the above $0$ from the l.h.s. of \eqref{extend} and next observing that the resulting test function $\phi - \overline{\phi} \in \mathcal{C}^{1}_0(2B \times 2I)$ vanishes on $\Omega_0^+ \times 2 I$ by the definition of $ \overline{\phi}$, we have
\[
I =  \int_{\Omega_0^- \times (2 I)}  \big[ - \tilde u  \,   \partial_t (\phi - \overline{\phi} )  + \tilde A \nabla \tilde u  \cdot \nabla (\phi - \overline{\phi})  - \tilde f  \cdot \nabla (\phi - \overline{\phi})  \big] \dx dt
\]
We rewrite the r.h.s. above using  \eqref{eq:mirror} with $\tilde \eta = (\phi - \overline{\phi})_{| \Omega_0^- \times 2 I}$. Hence the above equality takes the form, with $\hat \phi (t,x) := (\phi - \overline{\phi}) (t, T^{-1} (x))$
\[
I =  \int_{\Omega_0^+ \times (2 I \cap [0,T])}  \big[ -  u  \,   \partial_t \hat \phi  +  A \nabla u  \cdot \nabla \hat \phi   - f  \cdot \nabla \hat \phi   \big] \dx dt;
\]
thanks to its definition $\hat \phi \in \mathcal{C}^{1}_0 \left((\overline{\Omega_0^+} \cap 2B) \times 2 I \right)$,  so\footnote{Actually, $\hat \phi (t)$ is defined on $(\Omega_0^+ \cup \Omega_0^-) \cap 2B$, whose interface is $\partial \Omega_0 \cap 2B$ and $\hat \phi (t) $ vanishes on $\Omega_0^- \cap 2B$, hence it can be extended by zero to $\partial \Omega_0 \cap 2B$.} 
\[
I = \int_{{Q_T}}  \big[ -  u  \,   \partial_t \hat \phi  +  A \nabla u  \cdot \nabla \hat \phi   - f  \cdot \nabla \hat \phi   \big] \dx dt
\]
for a $\hat \phi \in \mathcal{C}^{1}_0 \left((\overline{\Omega_0^+}  \cap 2B) \times 2 I \right)$.
This and admissibility of $\hat \phi$ into \eqref{wfL2}
proves~\eqref{extend}, i.e.
\begin{equation}\label{extend2}
 \int_{{Q_T} }  \big[ - \tilde u  \,   \partial_t \phi   + \tilde A \nabla \tilde u  \cdot \nabla \phi  - \tilde f  \cdot \nabla \phi  \big] \dx dt = 0
\end{equation}
for any $\varphi \in \mathcal{C}^{1}_0(2B \times 2I)$ (As before, the difference in integration domain between \eqref{extend} and ~\eqref{extend2} is mitigated by a support of the test function). Consequently, \eqref{extend2} suggests an application of the interior Lemma~\ref{thm:ell-local}.

Hence to conclude, we need to check if $\tilde u, \tilde A, \tilde f$ and $\tilde \omega$ satisfy the assumptions of Lemma~\ref{thm:ell-local}. Regularity classes of $\tilde u, \tilde A, \tilde f$ and $\tilde \omega$ follows from their definitions and change of variables. Since $J$ is the Jacobian of a product of a small perturbation of the odd reflection matrix $R$, recall \eqref{small_pert} (hence composition of two $R$'s, present in the definition of $\tilde{A}$, is a small perturbation of identity)  and a matrix of translation and rotation $\Psi$, strong ellipticity of $A$ implies strong ellipticity of  $\tilde{A}$,  for an appropriately chosen $\epsilon$ of \eqref{small_pert} in relation to the ellipticity constants $c_1, c_2$ of $A$. The  ellipticity constants of $\tilde{A}$ depend thus on $c_1, c_2$ and  the shape of $\Omega$. (The choice of $\epsilon$ here and consequently of $R_0$  also influences the upper bound on diameter of $2B$.)

 Finally we need to show small oscillations of $\tilde{A}$. Using boundedness of $A$ and $J$ we have
$$
\begin{aligned}
\sup_{t,s \in  2I} \sup_{x,y \in  \Omega_0^-}|\tilde{A}(t, x)-\tilde{A}(s, y)|&\le \sup_{t,s \in  2I} \sup_{x,y \in \Omega_0^+}|J(x)A(t, x)J^T(x)-J(y)A(s, y)J^T(y)|\\
&\le C \sup_{t,s \in  2I} \sup_{x,y \in \Omega_0^+} |A(t, x)-A(s, y)| + C \sup_{x,y \in \Omega_0^+} |J(x)-J(y)| \\
&\le C \delta + C \epsilon,
\end{aligned}
$$
with the last inequality following from our assumption \eqref{spb} and  \eqref{small_pert}. A similar computation for other cases implies that
\[
\sup_{t,s \in  2I} \sup_{x,y \in 2B}|\tilde{A}(t, x)-\tilde{A}(s, y)| \le C \delta + C \epsilon.
\]
This allows us to choose $R_0$ so small and consequently $\epsilon$ so small that the small oscillation assumption of Lemma~\ref{thm:ell-local} is satisfied. Observe that this choice and the previous assumption that $2B$ intersects a single portion of boundary $\partial \Omega_0$ is an upper bound on the diameter.

Now directly from Lemma \ref{thm:ell-local} and the $T$-related variable change we have for any boundary cylinder $Q_R$ with $R\le R_0$ (note that $R_0$ is already fixed)
\[
\begin{aligned}
& \bigg(\frac{1}{|Q_R|} \int_{Q_R \cap {Q_T}}\abs{\nabla u}^p\omega \dz\bigg)^\frac1p\leq \\
& C\bigg(  \frac{1}{|Q_{2R}|} \int_{Q_{2R}  \cap {Q_{\tilde T}}}\abs{f}^p\omega \dz\bigg)^\frac1p
  + C \bigg(\frac{1}{|Q_{2R}|} \int_{Q_{2R}  \cap {Q_{\tilde{T}}}}\omega \dz\bigg)^{\frac{1}{p}}\bigg(\frac{1}{|Q_{2R}|} \int_{Q_{2R}  \cap {Q_{\tilde{T}}}}\abs{\nabla u}^{\tilde{q}} \dz\bigg)^{\frac{1}{\tilde{q}}}.
  \end{aligned}
\]
Due to the assumed regularity $\Omega \in \mathcal{C}^1$ and local `flatness', compare \eqref{small_pert}, we can replace measures of $Q_R$ and $Q_{2R}$ by the desired ones, at a cost of changing $C$, hence obtaining \eqref{uptobound}.
\end{proof}

\subsubsection{Global optimal estimate  \eqref{keyest}}
Now we will combine local interior (Lemma~\ref{thm:ell-local}) and boundary (Lemma~\ref{lem:halfball}) estimates into the optimal global estimate  \eqref{keyest}, thus completing the proof of Theorem~ \ref{thm:lin} for homogenous initial-boundary data.

Let us recall that we have the weak solution $u$ to  \eqref{eq:sysL} with $f \in L_\omega^{p}({Q_T})$  such that
  \begin{equation}\label{eq:subo}
 \norm{\nabla u}_{L^{\tilde{q}}({Q_{\tilde{T}}})} \leq C(\tilde{A},q,\Omega, {A}_p(\omega))\norm{f}_{L_\omega^{p}({Q_T})}.
 \end{equation}
for a $\tilde{q} >1$ related to \eqref{eq:lqprop}, compare \eqref{wsq}.
Such $u$ satisfies the respective assumption of Lemmata  \ref{thm:ell-local}, \ref{lem:halfball}.
    Let us fix $\delta>0$ and $R_0$ (and consequently $\epsilon$) in accordance with Lemmata  \ref{thm:ell-local}, \ref{lem:halfball}.
  Since~$\Omega$
has~$\mathcal{C}^1$ boundary, we can find a finite covering $\cup_{i=1}^N Q_i$  of ${Q_T}$ by parabolic interior and boundary cylinders and due to the continuity of $\tilde{A}$ we have all assumptions of  Lemmata  \ref{thm:ell-local} and \ref{lem:halfball} satisfied, i.e.,
$$
\sup_{z_1,z_2\in {Q_T}  \cap 2 Q_i}|\tilde{A}(z_1)-\tilde{A}(z_2)|\le \delta.
$$

Now it follows from \eqref{unlocal} and \eqref{uptobound} that
\begin{equation*}
\int_{{Q_T}}\abs{\nabla u}^p\omega \dx \leq
  C \int_{{Q_T}}\abs{f}^p\omega \dx+   C \sum_i
  \frac{\omega(2Q_i \cap {Q_T})}{|2Q_i \cap {Q_T}|^{\frac{p}{\tilde{q}}}}
  \left(\int_{{Q_T}}\abs{\nabla u}^{\tilde{q}}
    \dx\right)^{\frac{p}{\tilde{q}}} \leq   C \int_{{Q_T}}\abs{f}^p\omega \dx
\end{equation*}
with $C= C(A,\Omega, A_p(\omega), N)$, where the second inequality follows from \eqref{eq:subo} and finiteness of the involved sum.
 This finishes the proof of Theorem~\ref{thm:lin} in the case of homogenous initial boundary data $\tilde{g} \equiv 0$.

\subsection{Inhomogenous initial boundary data}
Let us take in already proven homogenous version of Theorem~\ref{thm:lin} right hand side (force) $H := f+ F -\tilde A \nabla g$. We obtain solution $v$ that satisfies
\begin{equation}\label{wf:lin_in}
\int_{Q_T} -v\cdot \partial_t \varphi + \tilde{A} \nabla v \cdot \nabla \varphi \dz = \int_{Q_T} (f+F- \tilde A \nabla g)\cdot \nabla \varphi \dz.
\end{equation}
with estimate
$$
\int_{Q_T} |\nabla v|^p \omega \dz \le C\int_{Q_T}(|f|^p + |F|^p +|\nabla g|^p)\omega \dz.
$$
implying for $u:=v+g$ via the triangle inequality that \eqref{keyest}. Identity \eqref{wf:lin_in} is \eqref{wf:lin}.

Concerning the uniqueness, we see that
\begin{equation*}
\int_{Q_T} -(u_1-u_2 -g_1+g_2)\cdot \partial_t \varphi + \tilde{A} \nabla (u_1-u_2) \cdot \nabla \varphi \dz = \int_{Q_T} (F_1-F_2)\cdot \nabla \varphi \dz
\end{equation*}
for all $\varphi \in \mathcal{C}_0^1((-\infty,T)\times \Omega)$. Due to the compatibility assumption \eqref{uniq:nonlin}, it however follows that
\begin{equation*}
\int_{Q_T} -(u_1-u_2)\cdot \partial_t \varphi + \tilde{A} \nabla (u_1-u_2) \cdot \nabla \varphi \dz =0.
\end{equation*}
Consequently, since according to Lemma~\ref{lem:CZ} the only solution for zero data is zero and since $u_1=u_2$ on $\partial \Omega \times (0,T)$, we see that $u_1=u_2$ almost everywhere in $Q_T$. Hence, the proof of Theorem~\ref{thm:lin} is complete.


\section{Proof of Theorem \ref{th}}
\label{sec:pf}

As in the linear case, let us for now consider the homogenous case, i.e., the case  $g \equiv 0$ and $F\equiv 0$. The way to recover the inhomogenous case will be sketched at the end of this section.

Let us take in \eqref{eq:sysA} an arbitrary fixed forcing term  $f\in L_\omega^q ({Q_T};\mathbb{R}^{n\times N})$ with a $q \in (1,\infty)$ and with an arbitrary $\omega \in \mathcal{A}_q$. Then we know that there is a $q_0\in (1,\min\{2,q\})$, such that $f\in L^{q_0}(Q_T; \mathbb{R}^{n\times N})$. 
Defining $\omega_0 := (1+ {M}f)^{q_0-2}$, we can use Lemma~\ref{cor:dual} to obtain that $\omega_0 \in \mathcal{A}_2$ and it is evident that  $f \in L^2_{\omega_0}({Q_T}; \setR^{n\times N})$. With this basic notation, we show the existence of a weak solution fulfilling \eqref{eq:WL}.

\subsection{Approximative problems}

We set $f^k:=f \chi_{\set{|f|<k}}$. It is evident that $f^k$ are bounded functions, $|f^k|\nearrow |f|$ and
\begin{align}\label{cfn}
  f^k \to f &&\textrm{strongly in } L^2_{\omega_0}\cap
  L_\omega^{q}({Q_T}; \setR^{n\times N}).
\end{align}
Therefore, for any $k$, we can apply the standard monotone operator theory to find a weak solution to
\begin{align}
\label{eq:aprox}
\begin{aligned}
  \partial_tu^k-\divergence A(z, \nabla u^k) &= -\divergence
  f^k &&\textrm{ in }{Q_T},\\
  u^k&=0&&\textrm{ on } \partial \Omega \times (0,T),
  \\
  u^k(0)&=0&&\textrm{ in } \Omega.
  \end{aligned}
\end{align}
In addition, we know that $u^k$ belongs to the natural Bochner spaces
\begin{equation}\label{eq:approxC}
\begin{aligned}
u^k &\in L^2([0,T],W^{1,2}_0(\Omega; \mathbb{R}^N)) \cap C([0,T], L^2(\Omega; \mathbb{R}^N)) \\
\partial_t u^k &\in L^2([0,T],(W^{1,2}_0)^*(\Omega; \mathbb{R}^N))
\end{aligned}
\end{equation}
and fulfil for every $\varphi \in \mathcal{C}^1_0((-\infty,T)\times \Omega; \mathbb{R}^N)$
\begin{equation}
\label{aprox2}
\int_{Q_T} \left( -u^k\cdot \partial_t\varphi + A(z,\nabla u^k) \cdot \nabla \varphi \right) \dz = \int_{Q_T} f^k
  \cdot \nabla \varphi \dz.
\end{equation}
Our goal is to let $k\to \infty$ in \eqref{aprox2} and to show that there exists a limit $u$ which satisfies \eqref{eq:WL}.

\subsection{Uniform estimates}
We start with the estimates that are independent of $k$. To do so, we compare \eqref{eq:approxC} with the proper linear system.
Naturally,  $u^k$ also solves  the following linear system
\begin{align}
\label{eq:aproxL}
\begin{aligned}
  \partial_tu^k-\divergence \left( \tilde A (z) \nabla u^k \right) &= -\divergence \left(
  f^k  + \left( \tilde A (z) \nabla u^k - A(z, \nabla u^k) \right) \right)&&\textrm{ in }{Q_T},\\
  u^k&=0&&\textrm{ on } \partial \Omega \times (0,T),
  \\
  u^k(0,\cdot)&=0&&\textrm{ in } \Omega.
  \end{aligned}
\end{align}
Defining the auxiliary weight $\omega_0^n:=\min\{n,\omega_0\}$ (which is bounded), we see that $A_2(\omega_0^n)\le 1+A_2(\omega_0)$ and thanks to \eqref{eq:approxC}, we can use
Theorem \ref{thm:lin} to observe that
 \begin{equation}\label{approx:keyest}
 \norm{\nabla u^k}_{L_{\omega_0^n}^{2}({Q_T})} \leq C(\tilde A,\Omega, A_2(\omega_0)) \left( \norm{f^k}_{L_{\omega_0^n}^{2}({Q_T})} + \norm{ \tilde A (z) \nabla u^k - A(z, \nabla u^k) }_{L_{\omega_0^n}^{2}({Q_T})}  \right).
\end{equation}
Due to the definition of $\omega_0^n$, we know that the right hand side is finite and thanks to Assumption~\ref{ass:A}, we have
\[
\begin{aligned}
| \tilde A (z) {Q} - A(z; {Q})  | &\le \frac{| \tilde A (z) {Q} - A(z; {Q})  |}{|Q|} |Q| 1_{\{|{Q}|\ge m \}} + \left(| \tilde A (z) {Q}| + |A(z; {Q})  | \right)  1_{\{|{Q}|< m \}}   \\
&\le \epsilon (m) |Q|  + C_\epsilon(m)
\end{aligned}
\]
with $\lim_{m \to 0 }\epsilon (m) =0$.
Applying this relation to $Q= \nabla u^k$ and combining it with \eqref{cfn}, we see that
 \begin{equation}\label{approx:keyest2}
 \norm{\nabla u^k}_{L_{\omega_0^n}^{2}({Q_T})} \leq C(\tilde A,\Omega, A_2(\omega_0)) \left( \norm{f}_{L_{\omega_0^n}^{2}({Q_T})} +  \epsilon (m)  \norm{\nabla u^k}_{L_{\omega_0^n}^{2}({Q_T})}   + C_\epsilon(m) \right).
\end{equation}
Hence, we choose $m$ so large that $\varepsilon C(\tilde A,\Omega, A_2(\omega_0))<\frac12$ and conclude
\begin{align*}
 \begin{aligned}
 \norm{\nabla u^k}_{L_{\omega_0^n}^{2}({Q_T})} \leq C (1+ \norm{f}_{L_{\omega_0^n}^{2}({Q_T})}  )\leq C (1+ \norm{f}_{L_{\omega_0}^{2}({Q_T})}  ).
 \end{aligned}
\end{align*}
Finally, we let $n\to \infty$ on the left hand side to get
\begin{align}
\label{aprior}
 \begin{aligned}
 \norm{\nabla u^k}_{L_{\omega_0}^{2}({Q_T})} \leq C (1+ \norm{f}_{L_{\omega_0}^{2}({Q_T})}  ),
 \end{aligned}
\end{align}
with $C$ depending merely on parameters of Assumption \ref{ass:A}, on $\Omega$ and $A_2(\omega_0)$. Analogously, we can obtain
\begin{align}
\label{apriorq}
 \begin{aligned}
 \norm{\nabla u^k}_{L_\omega^{q}({Q_T})} &\leq C (1+ \norm{f}_{L_\omega^{q}({Q_T})}  ),
 \\
 \norm{\nabla u^k}_{L^{q_0}({Q_T})} &\leq C (1+ \norm{f}_{L^{q_0}({Q_T})}  )
 \end{aligned}
\end{align}
with $C$ depending on parameters of Assumption~\ref{ass:A}, on $\Omega$, $q$ and $A_q(\omega)$.

\subsection{Weak limits}
Using the estimates \eqref{aprior} and \eqref{apriorq}, the reflexivity of the corresponding spaces and the growth given by Assumption \ref{ass:A},  we can pass to a subsequence (still denoted by $u^k$) such that
\begin{align}
 \label{conn-a}
 u^k &\rightharpoonup u &&\textrm{weakly in } L^{q_0}(0,T;W^{1,q_0}_0(\Omega; \setR^N)),
 \\
  \label{conn-b}
  \nabla u^k &\rightharpoonup \nabla u &&\textrm{weakly in }
  L^2_{\omega_0}\cap L_\omega^{q}\cap L^{q_0}({Q_T}; \setR^{n\times N}),\\
  A(x,\nabla u^k) &\rightharpoonup \overline{A} &&\textrm{weakly in }
  L^2_{\omega_0}\cap L_\omega^{q}\cap L^{q_0}({Q_T}; \setR^{n\times N})\label{con2}.
\end{align}
Next, using \eqref{conn-a}--\eqref{con2} and \eqref{cfn} in \eqref{aprox2} we obtain
\begin{equation}
\label{aprox3a}
\int_{Q_T} (-u\cdot\partial_t\phi + \overline{A} \cdot \nabla \phi ) \dz = \int_{Q_T}f \cdot \nabla \phi \dz
\end{equation}
for all $\phi \in \mathcal{C}^{0,1}_0((-\infty_0, T)\times \Omega))$. Moreover, the estimates \eqref{aprior}--\eqref{apriorq} remain valid also for $u$ due to the weak lower semicontinuity. Hence, $u$ satisfies \eqref{eq:opt}.

\subsection{Identification of the nonlinear limit}
There remains the most  difficult part, i.e. showing that
\begin{align}
\label{aim0}
  \overline{A}(z)&=A(z,\nabla u(z)) \qquad\textrm{ a.e. in  } Q_T.
\end{align}
Then, it follows from \eqref{aprox3a} that $u$ solves \eqref{eq:WL}.

We start the proof of \eqref{aim0} by showing that
\begin{align}\label{conn-c}
 u^k &\to u
 &&\textrm{strongly in }
  L^1({Q_T}; \setR^{N}).
\end{align}
Indeed, \eqref{conn-a} and \eqref{cfn} imply that $\partial_tu^k$ is bounded in 
$L^{q_0}(0,T;(W^{1,q'_0}_0(\Omega; \setR^N))^*)$, hence the Aubin-Lions argument implies \eqref{conn-c}.

Let us focus on \eqref{aim0}. For simplicity we denote $A^k:=A(\cdot, \nabla u^k)$ and set
$g_k:=(|A^k|^2 +\abs{\nabla u^k}^2+|f^k|^2 + \abs{\nabla u}^2 + |\overline{A}|^2)\omega_0$. Due to \eqref{conn-b}--\eqref{conn-c} and \eqref{cfn}, we see that the sequence $\{g^k\}_{k}$ is  bounded in $L^1(Q_T)$ and using the Chacon biting lemma, i.e., Lemma \ref{thm:blem}, we can find  a sequence of sets $E_j\subset Q_T$ such that  $\abs{Q_T\setminus E_j}\to 0$ as $j\to \infty$ and  such that for any $E_j$, one has that $g_k$ are equiintegrable in $E_j$. Hence,
 there exists a (non relabeled) subsequence such that
$$
 A^k\cdot \nabla (u^k-u) \omega_0 \weakto \xi_j \qquad \textrm{ weakly in } L^1 (E_j).
$$
The challenge is now to prove
\begin{align}
\label{aim1}
\int_{E_j}\xi_j \dz = 0.
\end{align}
Indeed, if we assume that \eqref{aim1} is satisfied, then it follows directly from \eqref{conn-b}--\eqref{conn-c} that
\[
\begin{aligned}
{\lim}_{k\to \infty}\int_{E_j}A^k\cdot \nabla u^k \omega_0\dz  &={\lim}_{k\to \infty}\int_{E_j}A^k\cdot \nabla (u^k-u) \omega_0\dz +{\lim}_{k\to \infty}\int_{E_j}A^k\cdot \nabla u \omega_0\dz\\
&= \int_{E_j}\overline{A}\cdot\nabla u \omega_0 \dz.
\end{aligned}
\]
Consequently, since $\omega_0$ is positive a.e. in $Q_T$, we can use the monotonicity and growth assumption on $A$, i.e., Assumption~\ref{ass:A}, to deduce that  for any $B \in L^2_{\omega_0} (Q_T)$
\[
0\leq \lim_{k\to \infty} \int_{E_j}(A^k-A(z, B))\cdot(\nabla u^k-B) \, \omega_0\dz =\int_{E_j} (\overline{A}-A(z, B))\cdot(\nabla u-B) \, \omega_0\dz.
\]
Therefore, taking $j \to \infty$, we obtain (note that the quantity is integrable thanks to \eqref{conn-b}--\eqref{conn-c} and we can use the Lebesgue dominated convergence theorem)
\[
0\leq \int_{Q_T} (\overline{A}-A(z, B))\cdot(\nabla u-B) \, \omega_0\dz  \quad (<\infty).
\]
Hence, the Minty trick allows to reconstruct the nonlinearity, i.e., \eqref{aim0} is established. For all the details we refer e.g. to \cite[pp. 4263--4264]{BulBurSch16}.

Thus, it remains to prove \eqref{aim1}. For brevity, we set $w^k:=u^k-u$ and $G^k:=A^k-\overline{A} -f^k +f$. Hence, it follows from \eqref{eq:aprox} and \eqref{aprox3a} that the sequence $\{w^k,G^k\}_{k}$ satisfies \eqref{eq:w}, which is one of the assumption of Theorem~\ref{thm:lipseq}. The second assumption \eqref{apkk} follows from \eqref{cfn} and \eqref{conn-b}--\eqref{con2}. Therefore, for any $\Lambda>0$ we can find a sequence $\{w^k_{\Lambda}\}_{k}$ fulfilling \ref{LS1}--\ref{LS4}. First, it directly follows from \ref{LS1}, \ref{LS2} and \eqref{conn-c} that
\begin{equation}
w^k_{\Lambda} \to 0 \quad \textrm{strongly in }L^p(Q_T;\mathbb{R}^N),\label{wkstr}
\end{equation}
for any $p\in [1,\infty]$. Hence, combining this result with \ref{LS3}, we obtain that for any $\eta \in \mathcal{C}^{0,1}_0(Q_T)$
\begin{equation}\label{tototo}
\begin{aligned}
\lim_{k\to \infty} \int_{Q_T} G^k \cdot \nabla w^k_{\Lambda} \eta \dz &=\lim_{k\to \infty} \int_{Q_T} G^k \cdot \nabla (w^k_{\Lambda} \eta) \dz \\
&=-\lim_{k\to \infty} \int_{Q_T} \partial_t w^k_{\Lambda} \cdot (w^k_{\Lambda}-w)\eta \dz.
\end{aligned}
\end{equation}
Next, due to the bound \ref{LS1} and \eqref{wkstr} we also have (for a subsequence)
\begin{equation}
\nabla w^k_{\Lambda} \rightharpoonup^* 0 \quad \textrm{weakly$^*$ in }L^{\infty}(Q_T;\mathbb{R}^{n\times N}).\label{wkstr2}
\end{equation}
Hence, using \eqref{wkstr}, \eqref{cfn} and \eqref{con2} and combining the result with \eqref{tototo}, we have
\begin{equation}\label{tototo2}
\begin{aligned}
\lim_{k\to \infty} \int_{Q_T} A^k \cdot \nabla w^k_{\Lambda} \eta \dz &=-\lim_{k\to \infty} \int_{Q_T} \partial_t w^k_{\Lambda} \cdot (w^k_{\Lambda}-w)\eta \dz.
\end{aligned}
\end{equation}
Next, using the density of smooth functions in $L^{q_0'}$, the uniform (independent of $k$) bounds \ref{LS1} and \eqref{con2}, we see that \eqref{tototo2} holds also for all $\eta \in L^{q_0'}(Q_T)$. Therefore, setting for an arbitrary $\ell>0$ a bounded $\eta:=\chi_{E_j}\min(\omega_0,\ell)=\chi_{E_j}\omega_0^{\ell}$, we arrive at
\begin{equation*}
\begin{aligned}
\lim_{k\to \infty} \left|\int_{E_j} A^k \cdot \nabla w^k_{\Lambda} \omega_0^{\ell} \dz\right| &=\lim_{k\to \infty} \left|\int_{E_{j}} \partial_t w^k_{\Lambda} \cdot (w^k_{\Lambda}-w)\omega_0^{\ell} \dz\right|.
\end{aligned}
\end{equation*}
Consequently, with the help of \ref{LS2} and the fact that $\omega_0^{\ell}\le \omega_0$, we can estimate the right hand side as
\begin{equation}\label{tototo3}
\begin{aligned}
\lim_{k\to \infty} \left|\int_{E_j} A^k \cdot \nabla w^k_{\Lambda} \omega_0^{\ell} \dz\right| &\le \lim_{k\to \infty} \int_{Q_T} |\partial_t w^k_{\Lambda} \cdot (w^k_{\Lambda}-w)|\omega_0 \dz \le \frac{C}{\sqrt{\Lambda}}.
\end{aligned}
\end{equation}
Finally, with the help of the H\"{o}lder inequality, the estimate \ref{LS2}, the definition \ref{LS4}, the definition of $g_k$, the triangle inequality and the relation \eqref{tototo3}, we have
\begin{align*}
\lim_{k\to \infty} \left|\int_{E_j}\xi_j \dz\right| &= \lim_{k\to \infty} \left|\int_{E_j} A^k \cdot \nabla w^k\omega_0\dz\right|\\
&\le \lim_{k\to \infty} \left|\int_{E_j} A^k \cdot \nabla w^k\omega_0^{\ell} \dz\right|+\lim_{k\to \infty} \int_{E_j \cap \{\omega_0\ge \ell\}} g_k\dz\\
&\le \lim_{k\to \infty} \left|\int_{E_j} A^k \cdot \nabla w^k_{\Lambda}\omega_0^{\ell} \dz\right|+\lim_{k\to \infty} \left|\int_{E_j\cap \mathcal{O}^k_{\Lambda}} A^k \cdot \nabla (w^k-w^k_{\Lambda})\omega_0^{\ell} \dz\right|\\
 &\quad+ \lim_{k\to \infty} \int_{E_j \cap \{\omega_0\ge \ell\}} g_k\dz\\
 &\le \frac{C}{\sqrt{\Lambda}} +  \lim_{k\to \infty} C\int_{E_j \cap (\{\omega_0\ge \ell\}\cup \mathcal{O}^k_{\Lambda})} (1+g_k)\dz
\end{align*}
Hence, since $\omega_0 \in L^1$ and we have \ref{LS4}, we know that
$$
|(\{\omega_0\ge \ell\}\cup \mathcal{O}^k_{\Lambda})| \le \frac{C}{\Lambda} + \frac{C}{\ell}
$$
and using the equiintegrability of the sequence $\{g_k\}_k$ on the set $E_j$ we find that
\begin{align*}
\lim_{k\to \infty} \left|\int_{E_j}\xi_j \dz\right| &\le \limsup_{\Lambda \to \infty} \limsup_{\ell \to \infty} \left(\frac{C}{\sqrt{\Lambda}} +  \lim_{k\to \infty} C \int_{E_j \cap (\{\omega_0\ge \ell\}\cup \mathcal{O}^k_{\Lambda})} (1+g_k) \dz\right)=0.
\end{align*}
Thus, \eqref{aim1} is proved and therefore \eqref{aim0} holds. Hence, $u$ is a solution.

\subsection{Uniqueness}
This section is heavily inspired by its `elliptic' counterpart \cite{BulDieSch15}.
Let two solutions $u_1$ and $u_2$ with data $(g_1,F_1,f)$ and $(g_2,F_2,f)$ satisfy the assumptions of Theorem~\ref{th}. Then defining $w:=u_1-u_2$ and using the compatibility condition \eqref{uniq:nonlin}, we get that $w\in L^s(0,T; W^{1,s}_0(\Omega;\mathbb{R}^N))$ with some $s>1$ solves
\begin{equation}\label{eq:WLpro}
\begin{split}
 &\int_{{Q_T}} \left[ - w (z) \cdot \partial_t \phi (z) +  \tilde{A}(z)\nabla w \cdot \nabla \phi (z)\right] \dz =\\
 & \int_{{Q_T}} \left[ \left(\tilde{A}(z)\nabla w - (A(z; \nabla u_1 (z))-A(z; \nabla u_2 (z))) \right) \cdot \nabla \phi (z)\right] \dz
\end{split}
\end{equation}
for an arbitrary $\varphi \in \mathcal{C}^{\infty}((-\infty,T)\times \Omega; \mathbb{R}^N)$. Next, we set
$$
\omega^k:=\min\{1, k[(M(\nabla u_1))^{s-2}+(M(\nabla u_1))^{s-2}]\}
$$
and using Lemma~\ref{cor:dual} we see that $\omega^k \in \Acal_{2}$ and
$$
A_2(\omega^k) \le 1 + A_2(k[(M(\nabla u_1))^{s-2}+(M(\nabla u_1))^{s-2}])=  1 + A_2((M(\nabla u_1))^{s-2}+(M(\nabla u_1))^{s-2})\le C.
$$
With such a weight, we can use the growth assumption on $A$ and $\tilde{A}$, i.e. Assumption \ref{ass:A}, and we obtain that
$$
F:= \tilde{A}(z)\nabla w - (A(z; \nabla u_1 (z))-A(z; \nabla u_2 (z))) \in L^2_{\omega^k}(Q_T;\mathbb{R}^{n\times N}).
$$

Hence, we can use the weighted linear theory, i.e. Theorem~\ref{thm:lin} to conclude
\begin{equation}
\label{un3}
\int_{Q_T}| \nabla w |^2 \omega^{{k}} \dz \le C \int_{Q_T} \left| \tilde{A}(z)\nabla w - (A(z; \nabla u_1 (z))-A(z; \nabla u_2 (z)))\right|^2 \omega^k \dz,
\end{equation}
where the constant $C$ independent of $k$.

Next, it follows from  Assumptions \ref{ass:A} and  \ref{ass:B} that (for details we refer to \cite{BulDieSch15})  for every $\delta>0$ there exists $C$ such that for all $z\in Q_T$ and all $Q, P \in \mathbb{R}^{n\times N}$ it holds
\begin{equation}\label{algebra}
\left| \tilde A (z) (Q -  P)  -  \left(A(z, Q) - A(z, P) \right) \right| \le \delta |Q-P| + C(\delta),
\end{equation}
The inequality \eqref{algebra} used in \eqref{un3} gives for any $\delta>0$
\begin{equation}
\label{un4}
\begin{split}
\int_{Q_T}| \nabla w |^2 \omega^{{k}} \dz \le C  \delta \int_{Q_T}| \nabla w |^2 \omega^{{k}}  \dz + C(\delta)\int_{Q_T}\omega^k\dz.
\end{split}
\end{equation}
Thus, setting $\delta$ sufficiently small  yields
\begin{equation}
\label{un5}
\begin{split}
\int_{Q_T}| \nabla w |^2  \omega^k \dz \le C\int_{Q_T} \omega^k \le C,
\end{split}
\end{equation}
where the last inequality follows from the fact that $Q_T$ is bounded and $\omega^k\le 1$. Hence, letting $k\to \infty$ in \eqref{un5}, together with $\omega^k \nearrow 1$ and the monotone convergence theorem implies $\nabla w = \nabla(u_1 - u_2) \in L^2 (Q_T; \mathbb{R}^{n\times N})$. Using \eqref{algebra}, we see that also
$$
(A(\cdot; \nabla u_1)-A(\cdot; \nabla u_2))\in L^2(Q_T; \mathbb{R}^{n\times N}).
$$
Consequently, going back to \eqref{eq:WL}, it also follows that
$$
\partial_t(u_1-u_2)=\partial_t w\in L^2(0,T; (W^{1,2}_0(\Omega;\mathbb{R}^N))^*)
$$
and due to the standard interpolation theorem and thanks to \eqref{eq:WL}, we have that $u_1-u_2 \in \mathcal{C}(0,T; L^2(\Omega;\mathbb{R}^N)$, $u_1(0)-u_2(0)=0$ and
$$
\int_0^T \left\langle \partial_t (u_1-u_2), \varphi\right\rangle + \int_{\Omega} (A(\cdot; \nabla u_1)-A(\cdot; \nabla u_2)) \cdot \nabla \varphi\dz \dt =0
$$
for all $\varphi\in L^2(0,T; W^{1,2}_0(\Omega;\mathbb{R}^N))$. Consequently, we can choose $\varphi:= (u_1-u_2)$ and then the monotonicity of $A$ with a Gronwall argument that $u_1=u_2$ a.e. in $Q_T$ (naturally, unlike in the steady case, we do not need strict monotonicity here thanks to Gronwall).

\subsection{Inhomogenous case}
Here we just add some remarks on the proof for the inhomogeneous case. Let $(g,F,f)$ be given data and we look for a solution fulfilling \eqref{eq:WL}. Then defining a new unknown $v:=u-g$, solving inhomogenous case is equivalent to finding $v\in L^q(0,T; W^{1,q}_0(\Omega;\mathbb{R}^{N}))$ that satisfies
\begin{equation}\label{eq:WLnew}
 \int_{{Q_T}} \left[ (- v(z)) \cdot \partial_t \phi (z) + A(z; \nabla (v-g) (z)) \cdot \nabla \phi (z)\right] \dz \\
  =\int_{Q_T}  (f (z)+F(z)) \cdot \nabla \phi (z) \dz\,.
\end{equation}
Let us thence define an approximative problem
\begin{equation}\label{eq:WLnewap}
\begin{split}
& \int_{{Q_T}} \left[ (- v^k(z)) \cdot \partial_t \phi (z) + A(z; \nabla v^k-(\nabla g)^k) (z)) \cdot \nabla \phi (z)\right] \dz \\
&\quad  =\int_{Q_T}  (f^k (z)+F^k(z)) \cdot \nabla \phi (z) \dz\,,
 \end{split}
\end{equation}
where
$$
(\nabla g)^k:=\nabla g \chi_{\{|\nabla g|\le k\}}, \quad f^k:=f\chi_{\{|f|\le k\}}, \quad F^k:=F\chi_{\{|F|\le k\}}.
$$
The problem \eqref{eq:WLnewap} has for each $k$ a unique solution $v^k\in L^2(0,T; W^{1,2}_0(\Omega;\mathbb{R}^N))$. Hence the a~priori estimates (independent of $k$) are obtained by comparing the problem \eqref{eq:WLnewap} with the linear problem as follows
\begin{equation}\label{eq:WLnewaplin}
\begin{split}
& \int_{{Q_T}} \left[ (- v^k(z)) \cdot \partial_t \phi (z) + \tilde{A}(z)\nabla v^k) \cdot \nabla \phi (z)\right] \dz \\
&\quad  =\int_{Q_T}  \left(f^k (z)+F^k(z)+\tilde{A}(z)\nabla v^k -A(z; \nabla v^k-(\nabla g)^k) (z) \right) \cdot \nabla \phi (z) \dz\,.
 \end{split}
\end{equation}
Using Assumption~\ref{ass:A}, we have pointwisely that for all $\varepsilon>0$ there exists $C(\varepsilon)$ such that
$$
\begin{aligned}
\left|\tilde{A}(z)\nabla v^k -A(z; \nabla v^k-(\nabla g)^k) (z)\right|&\le C|\nabla g| + \left|\tilde{A}(z)(\nabla v^k-(\nabla g)^k) -A(z; \nabla v^k-(\nabla g)^k) (z)\right|\\
&\le C|\nabla g| + \varepsilon |\nabla v^k-(\nabla g)^k| + C(\varepsilon)\le \varepsilon|\nabla v^k| + C(\varepsilon) |\nabla g|.
\end{aligned}
$$
Hence, we can proceed exactly in the same way as in the homogeneous case and obtain estimates on $v^k$ depending now on $F$, $g$ and $f$. The limit procedure follows almost step by step the limit procedure of homogeneous case to obtain a solution $v$ of \eqref{eq:WLnew}. Finally, defining $u:=v+g$, it follows from \eqref{eq:WLnew} that it is a weak solution. The proof is complete. 
\begin{proof}[Proof of Corollary~\ref{CMB}]
Let us solve auxiliary scalar problems for each scalar component $\mu^i$, $i = 1, \dots N$ of $\mu$
\[
\begin{aligned}
  \partial_th^i-\Delta h^i &= \mu^i &&\textrm{ in } Q_T,\\
  h^i&=0&&\textrm{ on } \partial Q_T
  \end{aligned}
\]
that have a unique solution in $h=(h^1,...,h^N)\in L^s(0,T; W^{1,s}(\Omega;\RN))$ for $s< \frac{n}{n-1}$, cf.~\cite{Bocatall97,Casas97}.
Hence, the function $u$ exists if and only if $w=u-h$ satisfies
\[
\partial_t w-\divergence( A(\cdot,\nabla w+\nabla h))=-\divergence(\nabla h)\text{ in }\Omega, \quad w=0\text{ on }\partial \Omega.
\]
We now follow the existence proof of Theorem~\ref{thweak}. We define $H_k=
\min\{\abs{\nabla h},k\}\frac{\nabla h}{\abs{\nabla h}}$. Then we first solve
\[
\partial_t w_k-\divergence( A(\cdot,\nabla w_k+H_k)=-\divergence(H_k)\text{ in }\Omega, \quad w_k=0\text{ on }\partial \Omega
\]
This operators satisfy the necessary bounds such that we have the existence of a sequence $w_k$ that satisfies uniform a-priori bounds in weighted spaces. By imitating the argument in Theorem~\ref{thweak} we gain the existence of a solution $w$ and so the existence of $u:=w-h$. Next the stability and uniqueness of $w$ follows in precisely the same way as in the proof of Theorem~\ref{thweak}. 
 Finally, the uniqueness for $u$ follows by the uniqueness of $h$ and $w$. 
\end{proof}

\end{document}